\def\gg{\mathfrak g}
\def\o{\otimes}
\def\+{\oplus}
\newtheorem{theorem}{Theorem}[section]
\newtheorem{definition}[theorem]{Definition}
\newtheorem{defn}[theorem]{Definition}
\newtheorem{corollary}[theorem]{Corollary}
\newtheorem{example}[theorem]{Example}
\newtheorem{exa}[theorem]{Example}
\newtheorem{lemma}[theorem]{Lemma}
\newtheorem{remark}[theorem]{Remark}
\newtheorem{rmk}[theorem]{Remark}
\newtheorem{main}[theorem]{Main Theorem}
\newtheorem{mainht}[theorem]{Main Homotopy Theorem}
\newtheorem{mainhtp}[theorem]{Main Homotopy Perturbation Theorem}
\newtheorem{crl*}[theorem]{Corollary}
\newcommand{\dual}{\# }
\newcommand{\dgL}{dg Lie algebra\ }
\newcommand{\dgLL}{dg Lie algebra}
\newcommand{\dgLs}{dg Lie algebras\ }
\newcommand{\Loo}{$L_\infty$}
\newcommand{\spprox}{\cong }
\newcommand{\mathbold}{\mathbf}
\newcommand{\Q}{\mathbold Q}
\newcommand{\HH}{\mathcal H}     
\newtheorem{thm}[theorem]{Theorem}
\newtheorem{lm}[theorem]{Lemma}
\newtheorem{crl}[theorem]{Corollary}
\newtheorem{dfn}[theorem]{Definition}
\newtheorem{dfneg}[theorem]{Definition and Example}
\newtheorem{ex}{Example}[section]
\def\del{\partial}
\def\v{\vskip2ex}
\def\tensor{\otimes}
\begin{document}

\title {Deformation Theory and Rational Homotopy Type}
\author
{Mike Schlessinger and Jim Stasheff }
\linenumbers
\maketitle
\begin{abstract}
We regard the classification of rational homotopy types as a
problem in algebraic deformation theory:  any space with given cohomology is a
perturbation, or deformation, of the \lq\lq formal\rq\rq\,  space with that
cohomology.  The classifying space is then a \lq\lq moduli\rq\rq\,  space --- 
a certain quotient of an algebraic variety of perturbations. The
description we give of this moduli space links it with corresponding structures
 in homotopy theory, especially
the classification of fibres spaces $F \to E \overset{p}{ \rightarrow} B$
with fixed fibre $F$ in terms of homotopy classes of maps of the base
$B$ into a classifying space constructed from $Aut(F)$, the monoid of
homotopy equivalences of $F$ to itself.  We adopt  the
philosophy, later promoted by Deligne  in response  to
Goldman and Millson, that any problem in deformation theory is ``controlled'' by
a differential graded Lie algebra, unique up to homology
equivalence (quasi-isomorphism) of dg Lie algebras.  Here we extend this philosophy 
further to control by $L_\infty$ -algebras.

\end{abstract}
\centerline{\emph{In memory of Dan Quillen who established the foundation on which this work rests.}}
\tableofcontents


\vfill\eject

\section{Introduction}
\label{introduction}
In this paper, we regard the classification of rational homotopy types as a
problem in algebraic deformation theory:  any space with given cohomology is a
perturbation, or deformation, of the \lq\lq formal\rq\rq\,  space with that
cohomology.  The classifying space is then a \lq\lq moduli\rq\rq\,  space --- 
a certain quotient of an algebraic variety of perturbations. The
description we give of this moduli space links it with others which occur in
algebra and topology, for example, the moduli spaces of algebras or complex
manifolds. On the other hand, our dual vision emphasizes the analogy
with corresponding structures in homotopy theory, especially
the classification of fibres spaces $F \to E \overset{p}{ \rightarrow} B$
with fixed fibre $F$ in terms of homotopy classes of maps of the base
$B$ into a classifying space constructed from $Aut(F)$, the monoid of
homotopy equivalences of $F$ to itself.
In particular, the moduli space of rational homotopy types with fixed
cohomology algebra can be identified with the space of 
``path components'' of a certain differential graded coalgebra.

Although the majority of this paper is concerned with constructing and verifying
the relevant machinery, the final sections are devoted to a variety of examples,
which should be accessible without much of the machinery and might provide 
motivation for reading the technical details of the earlier
sections.

Portions of our work first appeared in print in \cite{Sjds,SS} and then in `samizdat' versions over the intervening 
decades (!), partly due to some consequences of the mixture of languages.
Some of those versions have worked there way into work of other researchers; we have tried to maintain 
much of the flavor of our early work while taking advantage of progress made by others.

Crucially, throughout this paper,  the ground field is the rational numbers, 
$\Bbb Q$
(characteristic 0 is really the relevant algebraic fact),
although parts of it make sense even over the integers.
\subsection{Background}
Rational homotopy theory regards {\bf rational homotopy equivalence}
of two simply connected spaces  as the equivalence relation generated
by the existence of a map $f : X \to Y$ inducing an
isomorphism $f^* : H^*(Y;\Q) \to H^*(X,\Q)$.  Here we are much closer to a
complete classification than in the ordinary (integral) homotopy category.  An
obvious invariant is the cohomology algebra $H^*(X;\Q)$.  Halperin and Stasheff
\cite{halperin-stasheff} showed that all simply connected spaces $X$ 
with fixed cohomology algebra $\HH$ of finite type over $\mathbold Q$
can be described (up to rational homotopy type) as follows:  
({\it Henceforth `space' shall mean `simply connected space of finite type'
unless otherwise specified.})

Resolve $\HH$ by a {\bf d}(ifferential)
{\bf g}(raded) {\bf c}(ommutative)  {\bf a}(lgebra) $(S Z,d)$
 which is connected and  free as a graded 
commutative algebra with a map $(S Z,d)\to \HH$ of dgcas inducing
$H(S Z,d) \simeq \HH$. Here $S$ denotes graded symmetric algebra.
(See section \ref{T-J} for details, especially in re: the various gradings involved.)
The notation $\Lambda$ instead of $S$ is often used
 within rational homotopy theory, where it   is a historical accident
 derived from de Rham theory.
Let $A^*(X)$ denote
a differential graded commutative algebra of ``differential forms
over the rationals'' for the space $X$,
e.g. Sullivan's version of the deRham complex \cite {sullivan:inf,
bousfield-guggenheim}).
Given an isomorphism $i : \HH \overset{\simeq }{\rightarrow} H^*(X)$, there is
a perturbation $p$ (a derivation of $S Z$ of degree $1$ which lowers
resolution degree by at least $2$ such that $(d+p)^2 = 0$) and a map of dga's
$(S Z,d+p)\to A^*(X)$ inducing an isomorphism of rational cohomology.
If $X$ and $Y$ have the same rational homotopy type, the perturbations $p_X$ and
$p_Y$ must be related in a certain way, spelled out in \S 2.2.  
This is one of several ways (cf. 
\cite {lemaire:tronq, felix:Bull.Soc.Math.France80}) it can be seen that
\begin{main}

 For fixed $\HH$, the set of homotopy types of pairs $(X,i : \HH
\simeq H(X))$ can be represented as the quotient $V/G$ of a (perhaps infinite
dimensional) conical rational algebraic variety $V$ modulo a pro-unipotent (algebraic) group
action $G$.
\end{main}

\begin{crl*}
 The set of rational homotopy types with fixed cohomology
$\HH$ can be represented as a quotient ${Aut}\, \HH\backslash V/G.$
\end{crl*}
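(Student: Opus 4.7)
The plan is to realize the set of rational homotopy types with fixed cohomology $\HH$ as a further quotient of $V/G$ by the natural action of $Aut\,\HH$, and then invoke the Main Theorem.

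First, I would identify the set of rational homotopy types with the orbit set of $Aut\,\HH$ acting on the set of marked pairs $(X, i : \HH \simeq H(X))$. On the one hand, a rational homotopy equivalence $f : X \to Y$ produces an automorphism $\phi := i_2^{-1} \circ f^* \circ i_1 \in Aut\,\HH$ relating the two markings; on the other hand, for any $\phi \in Aut\,\HH$ the pair $(X, i \circ \phi)$ has the same underlying space as $(X, i)$. Thus the orbits of the obvious right action $(X, i) \cdot \phi = (X, i \circ \phi)$ of $Aut\,\HH$ on pairs are precisely the fibers of the forgetful map to rational homotopy types.

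Second, I would transport this action to $V$. Given a perturbation $p$ realizing $(X, i)$ via a dga map $\rho : (SZ, d+p) \to A^*(X)$, and given $\phi \in Aut\,\HH$, the free resolution $(SZ, d) \to \HH$ admits a dga self-map $\Phi$ of $(SZ, d)$ lifting $\phi$ on cohomology (a quasi-isomorphism of the model to itself). Conjugating the perturbed differential, $d + p^\phi := \Phi^{-1} \circ (d + p) \circ \Phi$, produces a new square-zero derivation lowering resolution degree by at least $2$, so $\phi \cdot p := p^\phi$ is a candidate action of $Aut\,\HH$ on $V$; under it $\rho \circ \Phi$ realizes the pair $(X, i \circ \phi)$, so the transported action is the one we want.

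The crux, and the main obstacle, is well-definedness modulo $G$. Any two lifts $\Phi, \Phi'$ of the same $\phi$ differ by a dga self-equivalence of $(SZ, d)$ inducing the identity on $\HH$, and one must show that such self-equivalences act on perturbations precisely through elements of the pro-unipotent group $G$ introduced in the Main Theorem. This is the standard homotopy-uniqueness statement for free (Sullivan-type) models over $\Q$, and its pro-unipotent content is exactly what $G$ was constructed to capture. Granting this, $Aut\,\HH$ descends to an action on $V/G$, the bijection of the Main Theorem is equivariant with respect to the forgetful map to homotopy types, and the quotient is therefore $Aut\,\HH \backslash V/G$, as claimed.
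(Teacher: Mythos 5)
Your proposal is correct and follows essentially the same route as the paper: the paper likewise lets $g \in {Aut}\,\HH$ act on $V$ by lifting $g$ to an automorphism $\bar g$ of $(S Z,d)$ and conjugating the perturbation, $p \mapsto \bar g p \bar g^{-1}$, with independence of the choice of lift absorbed into the equivalence relation defining $G$ via the Halperin--Stasheff comparison machinery. The only difference is one of emphasis: you spell out the well-definedness issue that the paper leaves implicit.
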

\subsection{Control by DGLAs}
The variety and the group can be expressed in the following terms:  let
${Der}\ S Z$ denote the graded Lie algebra of graded
derivations of $S Z$, which is itself a \dgL (see Definition \ref{D:dgl})
with the commutator bracket and the differential
induced by the internal differential on $S Z$. 
Let $L\subset {Der}\, S Z$ be the sub--Lie algebra of 
derivations that decrease the sum of the total degree plus the resolution 
degree. 
The variety $V \subset L^1$ is precisely $V = \lbrace p \in L^1
\vert (d+p)^2 = 0\rbrace $.  In fact, $V$ is the cone on a
projective variety (of possibly infinite dimension) \S\ref{T:cone}
The pro-unipotent group $G$ is exp$ \, L$, which acts via
the adjoint action of $L$ on $d+p$.

We said above that we regard our problem as one of deformation theory
in the (homological) algebra sense.  A commutative
algebra $\HH$ has a Tate resolution which is an {\bf almost free}
 commutative dga $S Z$, that is, free as graded commutative algebra,
ignoring the differential.
A deformation of $\HH$ corresponds to a change in differential $d \to d+p$ on
$S Z$.  Instead of $L \subset {Der}\, S Z$ as above, the 
sub-dg Lie
algebra  $\bar L \subset \, {Der}\, S Z$ of nonpositive resolution
degree is used. 


As far as we know, deformation theory arose with work on families of complex structures.   Early on, these were expressed in terms of a moduli space \cite{riemann, teichmuller}.
This  began with Riemann, who first introduced the term "moduli".  He proved that the number of moduli of a surface of genus $0$ was $0$.  $1$ for genus $1$ and $3g-3$ for $g > 1$.  These are the same as the numbers of quadratic differentials on the surface, which was perhaps the impetus for Teichmueller's work identifying these as "infinitesimal deformations".  Teichmueller was probably the first to introduce the concept of an infinitesimal deformation of a complex manifold, but in his form it was restricted to Riemann surfaces and could not be extended to higher dimensions.
Froelicher and Nijenhuis \cite {froh-nij} gave the appropriate definition of  an infinitesimal deformation of complex structure of arbitrary dimension.
This  was the essential starting point of the work by Kodaira and Spencer \cite{ks:I&II}  and with Nirenberg \cite{kns}.
in terms of deformations.
To vary complex structure, one varied the
differential in the Dolbeault complex of a complex manifold. In this context, analytic questions were important, especially for convergence of power series solutions.(See both Doran's and Mazur's historical annotated bibliographies \cite{doran:hist, mazur:hist} for richer details.) 

Algebraic deformation theory began with Gerstenhaber's seminal paper \cite{gerst:coh}:
\begin{quote}
This paper contains the definitions and certain elementary theorems 
of a deformation theory for rings and algebras....mainly associative rings and algebras with only
brief allusions to the Lie case, but the definitions hold for wider classes of algebras.
\end{quote}
Subsequent work of Nijenhuis and Richardson \cite{nij-rich:Liedef} provided more detail for the Lie case.
Computations were  in terms of formal power series.
If a formal deformation solution was found, its convergence could be studied, sometimes without estimates. 
This is the context in which we work in studying deformations of rational homotopy types. Unfortunately, the word \emph{`formal'} appears here in another context, referring to a dgca which is weakly equivalent to its homology as a dgca.

An essential ingredient of our work is the combination of this algebraic
geometric aspect with a homotopy point of view.  
Indeed we adopted  the
philosophy, later promoted by Deligne \cite{deligne:gm} in response  to
Goldman and Millson \cite{ goldman-millson} (see \cite{gm:ihes} for a history of that development),
that any problem in deformation theory is ``controlled'' by
a differential graded Lie algebra, unique up to homology
equivalence (quasi-isomorphism or quism) (cf. \ref{quism}) of \dgLs. 
 In Section \ref{Linfty}, we extend this philosophy further to control by $L_\infty$ -algebras.

For a problem controlled by a general \dgLL, the deformation equation 
(also known as the Master Equation in the physics and physics 
inspired literature and now most commonly as the Maurer-Cartan equation) is written
$$dp + 1/2 [p,p]=0.$$
It appears in this form (though with the opposite sign and without
the 1/2, both of which are irrelevant)
in the early works on deformation of complex structure 
by Kodaira, Nirenberg and Spencer \cite{kns} and Kuranishi \cite{kur1}.


Differential graded Lie algebras provide a natural setting in which to pursue 
the obstruction method
for trying to integrate ``infinitesimal deformations'', elements of
$H^1({Der} S Z)$, to full deformations.  In that regard,
$H^*({Der} S Z)$ appears not only as a graded Lie algebra (in
the obvious way) but also as a strong homotopy Lie algebra, a
concept  proven to be of significance in physics,
especially in closed string field theory\cite {jds:csft,z:csft,wz}. This allows us to go beyond
the consideration of quadratic varieties so prominent in Goldman and Millson.

Since we are trying to describe the space of homotopy types, it 
is natural to do so in homotopy theoretic terms.
Quillen's approach to rational homotopy theory emphasizes differential graded
Lie algebras in another way.  The rational homotopy groups  $\pi_* (\Omega X)
\otimes  \Q$ form a graded Lie algebra under Samelson product.  Moreover, 
Quillen
\cite{quillen:qht}  produces a non--trivial differential graded Lie algebra $\lambda_X$ 
which not only gives  $\pi_* (\Omega X) \otimes \Q$ as $H(\lambda_X)$ 
but also faithfully
records the rational homotopy type of $X$.  A simplistic way of characterizing
such an $\lambda_X$ for nice $X$ is as follows: 
There is a standard
construction $A$ such that for any \dgL $L$, we have $A(L)$ as a
dgca, and for $\lambda_X$, we have $A(\lambda_X) \to A^*(X)$ as a model for
$X$ \cite{majewski}.  (For ordinary Lie algebras $L$, $A(L)$ is the standard 
(Chevalley-Eilenberg) complex of alternating forms used to define Lie
algebra cohomology \cite{CE}.)

On the other hand, $A$  can also be applied to the \dgL $L \subset \,
{Der}\, S Z$ above;  then it plays the role of a classifying
space.

\begin{mainht}\label{mainht}  For a simply connected graded commutative algebra
of finite type $\HH$, homotopy types of pairs
$(X,i : \HH \simeq  H(X))$ 
are in 1--1 correspondence with homotopy classes of dga
maps  $A(L) \to  \Q$.  
\end{mainht}

Now $A(L)$ will in general $not$ be connected, so the homotopy types correspond
to the \lq\lq path components\rq\rq.  One advantage of this approach is that it
suggests the homotopy invariance of the classifying objects.  Indeed, if
$(S Z,d)$ and $(S Z',d)$ are homotopy equivalent as dga's, then
${Der}\, S Z$ and ${Der}\, S Z'$ will be weakly homotopy
equivalent as dg Lie algebras) (Theorem 3.9).  In particular, if
$(S Z,d)$ is a resolution of  $\HH$, then $H({Der}\, (S Z,d))$
will be an invariant of $\HH$, not just as a graded Lie algebra but, in fact, as 
an $L_\infty$-algebra.  Similarly if $L$ and $L'$ are homotopy equivalent
in the category of \dgLs or $L_\infty$-algebras, then $A(L)$ and $A(L')$ will be homotopy
 equivalent dgca's.


The variety and group of the Main Theorem depend on a particular choice 
of $L$, but
the moduli space $V/G$ depends only on the \lq\lq homology type of $L$ (in
degree 1)\rq\rq (Definition \ref{indegree1}).

 We show in \ref{ } that for a simply connected dgca $A$ with
finite dimensional homology, the entire dg Lie algebra 
$L = \, {Der} \, A$ is
homology equivalent to a dg Lie algebra 
$K$ which is finite dimensional in each degree.  In
particular, the $V$ and $G$ of the classification problem may be taken to be
finite dimensional when $\HH$ has finite dimension.

The varieties $V$ which occur above are the \lq\lq versal deformation\rq\rq\,
spaces of algebraic geometry.  Our combination of the algebraic--geometric and
homotopy points of view leads to a particularly useful (and conceptually
significant) representation of the minimal versal deformation space called
\lq\lq miniversal\rq\rq.  
The coordinate ring of the miniversal $W$ is approximated
by the $0^{th}$ cohomology of $A(L)$, and its Zariski tangent space (the vector
space of infinitesimal deformations of structure) is $H^1(L)$.  
However, the quotient action on
the miniversal $W$ is no longer given by a group action in general, but only by
a `foliation' (in the generalized sense of e.g. Haefliger) induced from an  $L_\infty$-action of $H^0(L)$.  

From this and deformation theory,
we deduce information about the structure of a general $V_L/\text{exp}\, L,$ 
for example:

$V_L/\text{exp}\, L$ is one point provided $H^1(L) = 0$; 

$V_L/\text{exp}\, L$ is a variety provided $H^0(L) = 0$;  and 

$V_L$ is non-singular (a flat affine space) provided
$H^2(L) = 0$.

If $L$ is formal, i.e., if there is a \dgL map $L \to H(L)$ inducing
a homology isomorphism, then $V_L$ is isomorphic to the quadratic
variety $\{\theta\in H^1(L)| [\theta,\theta] =0\}$.  If $L$ is not
formal, then $[\theta,\theta]$ may be only the {\it primary obstruction}.
Let $V_1$ denote $\{\theta\in H^1(L)| [\theta,\theta] =0\}$.
There is a {\it secondary} obstruction defined on $V_1$ which
vanishes on a subspace denoted $V_2$.
The secondary obstruction was first
addressed in specific examples by Douady \cite{douady}.
Proceeding in this way, the succesive obstructions to
constructing a full perturbation can be described in terms of
Lie-Massey brackets \cite{retakh} and lead to
a filtration $H^1(L) \supset V_1 \supset V_2 \dots$
with intersection being $V_L$.

There are conceptual and computational advantages to replacing
$(S Z,d)$ by the   quadratic model $A(\lambda_X)$ where $X$ 
is the formal space (determined by just the cohomology algebra $\HH$) 
or better yet by $A(L(\HH))$ where $L(\HH)$ is the free Lie coalgebra model in
which the differential is generated by the multiplication 
$\HH \otimes \HH \to \HH$
(\S 3).  (In the ungraded case, $L(\HH)$ is the complex for the 
Harrison  \cite{harrison}
cohomology of a commutative algebra and, here in characteristic 0, also
known as the cotangent complex.) While $(S Z,d)$ corresponds to a 
Postnikov system,  
$L(\HH)$ relates to cellular data for the formal space determined by $\HH$.
  A perturbation of $L(\HH)$ which decreases bracket
length can be identified with a Lie symmetric map $\HH^{\otimes k}\to \HH$ of
degree $2-k$; this can sometimes be usefully interpreted via Massey products.
\subsection{Applications}
Although the general theory we develop has some intrinsic interest, it is 
in the application to specific rational homtopy types that some readers will find the appropriate justification.
In the last two sections, we study several families of examples. 
Finding a complete  set of
invariants for rational homotopy types seems to be of about the same order of
difficulty as finding a complete set of invariants for the G--orbits of a
variety  $V$, but, for special cohomology rings, a lot more can be said  
\cite{halperin-stasheff,felix:Bull.Soc.Math.France80,lemaire-sigrist,neis}; 
for example, 
Massey product structures can be very helpful, though
they are in general described in a form that is unsatisfactory.  When the
dimension is a small multiple of the connectivity, results can be given in a
neat form (\S 8). The deformations of the homotopy type of a bouquet of spheres
allows for detailed computations and reveals some pleasant surprises.

We have mentioned that $A(L)$ behaves like a classifying space. We carry this insight to fruition 
in \S 9
 by considering  nilpotent topological fibrations
$$
F \to  E \to  B
$$
of nice spaces.  For these, Sullivan (\cite {sullivan:inf} p. 313)
has proposed an algebraic model for $B\, {Aut}\, F$, 
the classifying space for nilpotent fibrations with fibre $F$.  His model
is essentially $A\, ({Der}\, S Z)$ suitably altered to model a nice
space where $S Z$ models $F$.  Here we view a fibration as a \lq\lq
deformation\rq\rq\, of the trivial fibration $F \times B$.  The set of such
fibrations is a quotient  $V_L/\text{exp}\, L$ of the type 
considered above.
($L$ is the complete tensor product of ${Der}\, S Z$ with an
algebraic model of the base, cf. 9.4.)  It then follows that $B\, {Aut}\,
F$ is modeled by $A(D)$ for a suitable sub \dgL $D$ of ${Der}\, S Z$,
in the sense that equivalence classes of fibrations $F \to E \to B$ correspond
to homotopy classes of maps of $A(D)$ into the model of $B$, at least
if $B$ is simply connected (9.6).  By placing
suitable restrictions on $D$, we obtain certain special kinds of fibrations and
also ``almost'' a space which simultaneously classifies perturbations of $F$ and $F$
fibrations.  Moreover, we can describe, in terms of the weight of the
perturbation, which fibrations correspond to perturbing the homotopy type of $F
\times B$ or the algebra structure of $H(F \times B)$.  The principal problem
left is to decide whether an arbitrary quotient variety can arise as a moduli
space for homotopy types or fibrations.

\subsection{Outline}
We have tried to write this paper so it will be of interest and accessible to
algebraic topologists, algebraic geometers and algebraists.  
We have presented at least a quick sketch
of all the machinery we use.  It would not hurt to be somewhat familiar with
 rational homotopy theory 
(especially \cite {sullivan:inf,halperin:lectures,
halperin-stasheff, halperin-avramov,tanre:modeles}) and Sullivan's models
in particular.  The examples 4.3, 6.4, 6.5, and 6.6 in \cite{halperin-stasheff}
 may help put the abstract constructions in focus. In  \S \ref{miniversal}, we move
 from homotopy theoretic language to that of algebraic geometry for a more
 traditional approach to moduli functors.

 In \S \ref{models} , we recall the notion of a dgca model of a rational homotopy type and
particularly the formal one given by the Tate--Jozefiak resolution, as well as
its perturbations.  We show the perturbations form a cone on a projective
variety.

In \S \ref{dgl}, we recall the standard constructions $C(L)$ and $A(L)$ for a dg Lie
algebra $L$, as well as the adjoint $L(C)$ from  dgcc's to \dgLs.  The homology
or cohomology of these constructions has significance in both homotopy and Lie
algebra theory, but we are more concerned with the constructions themselves.
Since some of our results depend on finite type or boundedness restrictions, we show
how the construction $C(L)$
does not change homotopy type under certain changes in $L$.
We also look at the special model  $A(L(\HH))$ and at $L(\HH)$ 
itself, where $\HH = H(C).$

In \S \ref{main}, we look at the key notion of homotopy of dg algebraic maps (as opposed to
homotopy of chain maps) and the corresponding notion for coalgebras.  The
corresponding relation between perturbations is best expressed via a
differential equation.
In \S \ref{proof main}, we give a direct proof for the invariance of $V_L/\text{exp} L^0 $ 
in terms of $H(L)$.
is induced on $H(L)$
In \S \ref{invariance}, we use the differential equation to complete the proof of the main
results: that for the appropriate dg Lie algebra $L$, the set of homotopy types
$(X;i:\HH \simeq  H(X))$ or equivalently the set $V_L/\text{exp} L^0$ corresponds to
the set $[\Q,\hat C(L)]$ of homotopy classes of maps of $\Q$ into $\hat C(L)$, the
completion of $C(L)$.

In \S \ref{Linfty}, we explain how an $L_\infty$-structure is induced on $H(L)$ so that our classification
can be expressed in terms of $H(Der ??).$

In \S \ref{alggeom}, we switch to the algebraic geometric language for deformation theory. 
We focus on the miniversal variety $W_L$ contained in $H^1(L)$ and the corresponding moduli space.


In \S \ref{examples}, we consider examples computationally, including relations to Massey
products and  examples of  exp$\, L^0$ actions in terms of maps of spheres.

In \S \ref{fibrations}, we establish the corresponding results for fibrations and compare
fibrations to perturbations of the product of the base and fibre 
in terms of weight conditions. We conclude with some open questions \S \ref{questions}.

As the first draft of this paper was being completed (by slow convergence), we
learned of the doctoral thesis of Yves F\'elix \cite{felix:diss}
(published as \cite{felix:Bull.Soc.Math.France80})
 which obtains some of these
results from a point of view less homotopy theoretic as far as the
classification is concerned, but which focuses more on the orbit
structure in $V_L$ and is closer to classical
deformation theory.  F\'elix deforms the algebra structure 
as well as the higher order
structure.  We are happy to report our computations agree where they
overlap.

Later, we learned of the thesis of Daniel Tanr\'e and his lecture notes where he
carried out the classification in terms of Quillen models \cite{tanre:modeles} (Corollaire VII.4. (4)).
with slightly more restrictive hypotheses in terms of connectivity, producing the equivalent classification.

J.-C. Thomas \cite{thomas:thesisI}  analyzes the internal structure of fibrations
 from a compatible but different point of view \cite{thomas:fibrations}.
Subsequently, Berikashvili (cf. \cite{beri:specseq}) and his school have
studied `pre-differentials', which are essentially equivalence classes
of our perturbations.  Applications to rational homtopy types and to
fibrations have been developed by Saneblidze \cite{sane:pre-diff}
 and Kadeishvili \cite{kad:pre-diff} respectively.

These are but some of the many results in rational homtopy theory that
have appeared
since the first draft of this paper, some in fact using our techniques.
For an extensive bibliography, consult the one prepared by F\'elix
 \cite{felix:bib} building on an earlier one by Bartik.

Finally, we owe the first referee a deep debt for his insistence that our results
deserved better than the exposition in our first draft (compare the even more
stream of consciousness preliminary version in \cite {SS}).  He has forced us 
to gain some perspective (even pushed us toward choosing what was, at that time,
 the right category in which to work) which hopefully we have revealed in the presentation.  
 In particular, the
Lie algebra models which have been emphasized in \cite{baues-lemaire,neis} 
are certainly
capable of significant further utilization, whether for classifying spaces or
manifolds \cite {jds:ratsp}.  He has also encouraged us to make extensive 
use of the
fine expositions of Tanr\'e \cite{tanre:modeles,tanre:harr}
 which appeared as this manuscript began
to approach the adiabatic limit.
The original research in this paper was done in the late 70's and
early 80's \cite{Sjds,SS},
as will be obvious from what is {\it not} assumed as `well known'.  
Rather than rewrite the paper in contemporary
fashion, postponing it to the next millenium, we have
elected to implement most of the referee's suggestions while leaving 
manifest the philosophy of those bygone times.
\vfill\eject
\def\sime{\approx}
\section {Models of homotopy types}
\label{models}


We begin with a very brief summary of those features of rational homotopy theory
which are relevant for our purpose.

Quillen's rational homotopy theory  \cite{quillen:qht} focuses on the equivalence
of the
rational homotopy category of simply connected $CW$ spaces and the homotopy
category of simply connected dgcc's (differential graded commutative coalgebras)
over $\Q$.  Sullivan \cite {sullivan:inf}
 uses dgca's (differential graded commutative algebras)
and calls attention to minimal models for dgca's so as to replace
homotopy
equivalence by isomorphism.  Halperin and Stasheff \cite{halperin-stasheff}
 discovered another
class of models which turn out to be appropriate for classification and can be
used without any elaborate machinery.  Indeed, we recall here what little we
need.

For our entire discussion, we let $\Q$ denote an arbitrary fixed ground field of characteristic 0. We adopt a 
strictly cohomological point of view, i.e. all graded vector spaces will be written
with an upper index (unless otherwise noted) and all  differentials  on graded algebras (associative or Lie or...) will be derivations of degree 1 and square 0.
As will be 
explicitly noted, many  graded algebras we encounter will be either  non--negatively (as for cochains on topological spaces) or non-positively
graded (as in algebraic geometry).  For an associative algebra  $A = \underset{n\ge 0}{\oplus} A^n$ and
 $A^0 = \Q$, we call $A$ \emph{connected}.  When needed, we will refer to 
{\bf simply connected} algebras, i.e., $A^0 = \Q$ and $A^1 = 0$.

\begin{dfn}\label{D:qht}
 Two dgca's $A_1$ and $A_2$ have the same {\bf rational
homotopy type} if there is a dgca $A$ and dga maps $\phi_i : A \to A_i$ such
that $\phi^*_i : H(A) \to H(A_i)$ is an isomorphism.  This is an equivalence
relation since such an $A$ can always be taken to be free as a gca; it is then
called a {\bf model} for $A_i$.  
\end{dfn}

One way to construct a model is as follows:

For any connected graded commutative algebra $\HH$ over $\Q$, there are free
algebra resolutions $A \to  \HH$; that is,
 $A$ is itself a  dgca, free as gca,  and the map,
which is a morphism of dga's regarding $\HH$ as having $d = 0$, induces an isomorphism $H(A)
 \overset{\sime}{\to}  \HH$.  Indeed, there is a minimal free dgca resolution which we denote
$(SZ,d)\to \HH$ due to Jozefiak \cite{jozefiak}
 which is a generalization of the Tate resolution \cite{tate} 
in the ungraded case.

\subsection{ The Tate--Jozefiak resolution in characteristic zero}
\label{T-J}
The free graded commutative algebra $SZ$ on a graded vector space $Z$ is
$E(Z^{odd}) \otimes P(Z^{even})$ where $E$ = exterior algebra and
$P$ = polynomial algebra.  In resolving a connected graded commutative
algebra $A$ by a dgca $(S Z, d)$, the 
generating graded vector space $Z$  will be bigraded:
$$
Z^n =\bigoplus^{w + q=n}_ {q \leq 0 }Z^{q,w}.
$$
We will refer to $n = w+q$ as the {\bf total} or {\bf topological} or
{\bf top} degree, to $q$ as the {\bf resolution} degree (for
historical reasons), and to $w$ as the {\bf weight} = topological
degree minus resolution degree. 
When a single
superscript grading appears, it will always be total degree.
The graded commutativity is with respect to the total degree.

The Tate--Jozefiak resolution $(S Z,d)\to \HH$ of a connected 
cga $\HH$ has a differential $d$ which increases total degree by $1$, 
decreases resolution
degree by $1$ and hence \emph{preserves} weight.  It is a graded derivation with
respect to total degree.  
For connected $\HH$, we let $Q\HH = \HH^+/(\HH^+ \cdot \HH^+)$ be the module of
indecomposables.  The vector space $Z^{0,*}$ is $Q\HH$.  The resolution  
$\rho : (S Z,d)\to \HH$ induces an isomorphism
$H(\rho): H(S Z,d) \overset{\sime}{\to} \HH$ which identifies
$\HH$ with $H^{0,*} (S Z,d)$.


 We refer to the Tate--Jozefiak resolution $(S Z,d)$ also as
 the {\bf bigraded} or {\bf minimal model} for  $\HH$.
It is minimal in the sense that the dimension of each $Z^{q,*}$
is as small as possible, but
the minimality is best expressed as $dZ
\subset
S ^+Z \cdot S ^+Z$ where $S ^+Z = \underset{n>0}{\oplus}
(S Z)^n$.

Several comments are in order.  Just as in ordinary homological algebra, one can
easily prove $(SZ,d)$ is uniquely determined up to isomorphism.  If
$(SZ',d) \underset{\rho^\prime}{\longrightarrow} 
\HH$ is any other minimal
free bigraded dgca with $H(\rho^{\prime })$ an isomorphism, then $(SZ^\prime ,d)$ is  isomorphic to $(S Z,d)$. 

\subsection{ The Halperin--Stasheff or filtered model} 
\label{H-S}
Given a dgca $(A,d_A)$ 
 and an isomorphism $i : \HH \sime H(A)$, Halperin and
Stasheff (\cite{halperin-stasheff} p. 249)
 construct a {\bf perturbation} $p$ of the Tate-Jozefiak $(SZ,d)$ of  $\HH$
  and thus a derivation $d+p$ on $SZ$ and a map
of dgca's $\pi : (SZ,d+p) \to (A,d_A)$ such that $(d+p)^2 = 0$ and
\begin{enumerate}
\item $p$ decreases resolution degree by at least 2 (i.e., decreases weight,
thus $(SZ,d+p)$ is filtered graded, not bigraded), and
\item $H(\pi)$  is an isomorphism $H(S Z,d+p) \sime  H(A)$.

\noindent In fact, filtering $S Z$ by resolution degree, we have in the resulting
spectral sequence
\item $(E_1, d_1) = (S Z,d)$ and $E_2 = H(S Z,d)$ which is concentrated
in resolution degree 0 and,
\item  by construction, $H(\pi)$ is the composite 
$$H(S Z,d+p) \overset{\sime }{\rightarrow}
H(S Z,d) \overset{\sime }{\underset{\rho^*}{\longrightarrow}}\HH
\overset{\sime }{\underset{i}{\longrightarrow}}H(A)$$
 \noindent
 with the first
isomorphism being the edge morphism of the spectral sequence.
\end{enumerate}

Now consider two dgca's $A$ and $B$ with isomorphisms
$$
H(A) \overset{\sime }{\underset{i}{\leftarrow}}\HH
\overset{\sime }{\underset{j}{\rightarrow}}H(B).
$$

The corresponding perturbed models $(S Z,d+p)$ and $(S Z,d+q)$ are
{\bf homotopy equivalent  relative to} $i$ {\bf and} $j$ (i.e., by
a map $\phi$
such that $H(\phi)= j i^{-1})$ if and only if there exists

$(*)$ a dga map $\phi : (S Z,d+p) \to (SZ, d+q)$ such that $\phi - Id$ lowers resolution degree. (It follows that $H(\phi)= j i^{-1}$.)

\begin{dfn} \label{D:perturb}
Let $(S Z,d)\to \HH$ be the bigraded model of $\HH$. 
A {\bf perturbation} of $d$ is a weight decreasing
derivation $p$ of total degree $1$ such that $(d+p)^2 = 0$.
\end{dfn}

For any perturbation, $(S Z,d+p)$ has cohomology isomorphic to
the original $\HH$.  Thus the classification of homotopy types can be done in
stages:
\begin{enumerate}
\item Fix a connected cga $\HH$.
\item Let $V=\lbrace \text{ perturbations } p\text{ of the
minimal model }(S Z,d)\text{ for }\HH\rbrace$.
\item Consider $V/ \sim $  where we write $p\sim q$ if there
exists a \hfill\newline $\phi : (S Z,d+p) \to (S Z,d+q)$ as
in $(*)$.
\item Consider $ {Aut}\, \HH\backslash V/\sim$ as a ``moduli space" to be called $M_\HH$.
\end{enumerate}

Here ${Aut}\, \HH$ acts as follows: If $\rho : (S Z,d)
\to \HH$ is a Tate-Jozefiak resolution of $\HH$ and $g \in \,{Aut}\, \HH$,
then $g\rho$ is also a resolution and hence $g$ lifts to an
automorphism $\bar g : (S Z,d) \to (S Z,d)$. Now if
$(S Z,d+p)$ is a perturbation, so is $(S Z,d+\bar
gp\bar g^{-1})$ and $p \to \bar gp\bar g^{-1}$ is the action of
${Aut}\, \HH$ on $V$.  The topology on ${Aut}\, \HH \backslash V/\sim$
will turn out to have an invariant meaning, so that
this quotient can meaningfully be called {\bf the space of homotopy types 
with cohomology algebra $\bold \HH$}.

If $\HH$ is finite dimensional, we will see fairly easily that $V$ is an algebraic variety and the equivalence
is via a group action of a unipotent algebraic group.

Consider the \dgL 
 ${Der}\, S Z = \underset{i}{\oplus}{Der}^iS Z$
where  ${Der}^i\, S Z$ is the sub--\dgL consisting
of all derivations that raise total
degree by the integer  $i$.  Define $L\subset {Der}\, S Z$
 to consist of all derivations which
decrease weight = total degree minus resolution degree.  Any $\phi
\in L$ of a particular total degree can
be regarded as an infinite sum (and conversely)
$$
\phi = \phi_1 + \phi_2 + \dots +\, \phi_k \, + \dots
$$
where $\phi_k$ decreases weight by $k$.  The infinite sum causes
no problem because there are no elements of negative weight and,
for any $z \in S Z$ of fixed
weight $k \ge 0$, $\phi (z)$ will be a finite sum: $\phi_1
(z)+\dots \phi_{k+1}(z)$. In other words, $L$ is complete with
respect to the weight filtration.

The weights on $L$ above allow us to analyze further $V \subset L^1$, i.e., 
$$V =
\text{ the variety } \lbrace p \mid (d+p)^2 = 0 \rbrace.$$

\begin{thm}\label{T:cone} If $\HH$ is finite dimesional,
   $V$ is the cone on a projective variety.
\end{thm}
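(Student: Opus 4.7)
The plan is to exhibit a $\mathbb{G}_m$-action on $L^1$ with strictly positive weights under which the Maurer--Cartan equation $(d+p)^2 = 0$ is weighted-homogeneous; this immediately presents $V$ as an affine cone over a projective variety.

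Decompose $L^1 = \prod_{k \geq 1} L^1_k$, where $L^1_k$ consists of degree $1$ derivations that lower weight by exactly $k$, so that each $p \in L^1$ writes uniquely as $p = \sum_{k\geq 1}p_k$ with $p_k \in L^1_k$ (the sum is locally finite on $SZ$, as noted just before the theorem). Since $d$ preserves weight and each $p_k$ lowers it by $k$, substituting $p = \sum p_k$ into $(d+p)^2 = [d,p] + \tfrac12[p,p] = 0$ and collecting by weight yields, for each $k \geq 1$, the polynomial equation
$$[d, p_k] \;+\; \tfrac{1}{2}\!\!\sum_{\substack{i+j=k\\ i,j\geq 1}}\!\! [p_i, p_j] \;=\; 0,$$
linear in $p_k$ and quadratic in the strictly lower-weight components. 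Thus $V$ is cut out of $L^1$ by an explicit family of polynomial equations.

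Now let $\mathbb{G}_m$ act on $L^1$ by $t\cdot p := \sum_{k\geq 1} t^k p_k$. Under this action the weight $-k$ equation scales by $t^k$, so it is homogeneous and $V$ is $\mathbb{G}_m$-invariant. All weights are strictly positive, so $0 \in V$ (which lies in $V$ since $d^2 = 0$) is the unique fixed point. Therefore $V$ is the affine cone on the (weighted) projective variety $\overline{V} := (V\setminus\{0\})/\mathbb{G}_m$.

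The one genuinely delicate point, and the only place finite-dimensionality of $\HH$ enters, is interpreting ``projective variety'' in the classical finite-dimensional sense. This rests on the finiteness reduction announced in the introduction: when $\HH$ is finite-dimensional, $L = \operatorname{Der}(SZ)$ is quasi-isomorphic to a dg Lie algebra finite-dimensional in each degree, so that the relevant portion of $L^1$ becomes finite-dimensional and $\overline{V}$ becomes a closed subvariety of an honest (weighted) projective space. The scaling and cone structure themselves are pure weight bookkeeping and require no finiteness assumption; the main obstacle is thus verifying that the finite-dimensional truncation of $L$ really transports $V$ faithfully, which the paper's later homotopy-invariance machinery is designed to handle.
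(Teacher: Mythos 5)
Your core argument is exactly the paper's. The paper also writes $p=\Sigma\, p_i$ with $p_i$ lowering weight by $i$, notes that $d$ preserves weight so that $(d+p)^2=0$ is weighted homogeneous, and observes that $\bar t p=\Sigma\, t^i p_i$ again satisfies $(d+\bar tp)^2=0$; your explicit weight-$k$ component equations $[d,p_k]+\tfrac12\sum_{i+j=k}[p_i,p_j]=0$ and the resulting positive-weight $\mathbb{G}_m$-action with $0$ as the unique fixed point are just a cleaner write-up of that. So the cone structure is established correctly and by the same route.

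Where you diverge is the finiteness discussion, and there your proposed route does not work. You suggest obtaining finite-dimensionality by replacing $L$ with a quasi-isomorphic dg Lie algebra of finite type and invoking the homotopy-invariance machinery of the later sections. But that machinery identifies only the quotients $V_K/\exp K^0\cong V_L/\exp L^0$ (Theorem 5.3); the variety $V$ itself is emphatically \emph{not} a quasi-isomorphism invariant (already $V$ versus the miniversal $W$ differ by a smooth factor), and the statement to be proved is about $V$, not about the moduli space. The paper instead makes the direct (if terse) claim that finite-dimensionality of $\HH$ forces $Z$, and hence the ambient vector space $L^1\subset \mathrm{Der}^1 SZ$, to be finite-dimensional, so that $V$ is cut out by finitely many weighted-homogeneous polynomials in an honest finite-dimensional space and $\overline V=(V\setminus\{0\})/\mathbb{G}_m$ is a closed subvariety of a weighted projective space. (That direct claim is what carries the weight of the hypothesis; the paper's following remark that for $\HH$ merely of finite type one only gets a pro-algebraic object shows this is where finite-dimensionality genuinely enters.) If you want to complete your write-up in the spirit of the paper, the missing step is a direct verification that $L^1$ is finite-dimensional, not an appeal to invariance under quasi-isomorphism.
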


\begin{proof}  The point is that, if $Z$ is finite dimensional, so will be $L^1.$ 
Since $d$ preserves weight, the equation
$(d+p)^2 = 0$ is weighted homogeneous.  That is, writing $p = \Sigma\ p_i$
 where  $p_i$ decreases weight by $i$, we have that $\bar t p = \Sigma\  t^i p_i$
satisfies $(d+\bar tp)^2 = 0$, as can be seen by expanding and collecting terms
of equal weight.  The locus of this ideal $$I = \lbrace d+p \mid (d+p)^2 =
0\rbrace$$ of weighted homogeneous polynomials is a subvariety of
a (finite dimensional) projective subspace.
\end{proof}

If $\HH$ is only of finite type (i.e. finite dimensional in each degree), then $V$ will be a pro-algebraic group
acted upon by a pro-unipotent group.


As $(S Z,d)$ is a model for (the cochains of) a simply
connected space $X$ of  finite type, we can interpret $H(Z,d)$ as dual 
to $\pi_* (X)$ where $d$ acts on $Z$ as the \emph{indecomposables}, i.e. the
quotient $S^+Z/S^+Z \cdot S^+Z$ (\cite {sullivan:inf} p. 301). 
For the  restriction 
for $Z$ to be finite dimensional, it is sometimes more
appropriate for us to model the space 
$X$ via a differential graded Lie algebra.  We next
consider several aspects of the theory of \dgLs and return to the
classification
in \S 4.
\newpage
\vfill\eject

\section {Differential graded Lie algebras, models and perturbations}
\label{dgl}

Differential graded Lie algebras appear in our theory in two ways, as models for
spaces and as the graded derivations of either a dgca or of another dg Lie
algebra (or the coalgebra analogs).  We will be particularly concerned with 
certain standard constructions
$C$ and $L$ which provide adjoint functors from the homotopy category of \dgLs
to the homotopy category of dgcc's and vice versa \cite{jcm, quillen:qht}.

CONNECTIVITY ASSUMPTIONS

  Quillen's approach to
rational homotopy theory is to construct a functor from simply connected
rational spaces to \dgLs and then apply $C$ to obtain a dgcc model.  The functor
$A(\quad ) = \, {Hom}\, (C(\quad ),\Q)$ from \dgLs to dga's fits more readily into a traditional
 exposition, but the usual subtleties of the Hom functor
necessitate the detour into the more natural differential graded coalgebras.  We recall
definitions and many of the basic results from Quillen  \cite{quillen:qht},
 especially Appendix B.

We will also be concerned with perturbations of filtered dg Lie algebras.  We conclude
this section with a comparison of Der$\, L$ and Der$\, A(L)$.
(Recall all vector spaces are over $\Q$.)

A crucial motivation for Quillen's theory is Serre's result 
\cite{serre:classes} that
$\pi_*(\Omega X)\otimes \Q$ is isomorphic as a graded Lie algebra to 
the primitive subspace $P\, H_*(\Omega X,\Q)$, 
so Quillen uses \dgLs with lower indices and $d$ of degree $-1$.  We do not
follow this tradition, but rather, consistent with our cohomological point of view, 
our \dgLs will 
have upper indices and differentials $d$ of degree $+1$.  \footnote{The two traditions
are identified via the convention: $L^i = L_{-i}$.}
(The history of graded Lie algebras is intimately related to homotopy theory,
see \cite{haring}.)

\begin{dfn}\label{D:dgl}
   \cite{quillen:qht}, p. 209.  A {\bf differential graded Lie algebra}
(dgL) $L$ consists of
\begin{enumerate}
\item a graded vector space $L = \{ L^i\} ,\ i \in \mathbf[Z]$,
\item a graded Lie bracket $[\ ,\ ] : L^i \otimes L^j \to L^{i+j}$ such that
\begin{gather*}
\quad [\theta,\phi] =- (-1)^{ij}[\phi,\theta]\qquad \text{and}\\
\quad (-1)^{ik}[\theta,[\phi,\psi]] + (-1)^{ji}[\phi,[\psi,\theta]] +
(-1)^{kj}[\psi,[\theta,\psi]] = 0,
\end{gather*}
\noindent
(In other words, $ad\, \theta := [\theta,\ ]$ is a derivation of degree $i$ for
$\theta \in L^i$.)
\item a graded Lie derivation $d : L^i \to L^{i+1}$ such that
$d^2 = 0$. \end{enumerate}
\end{dfn}
\begin{dfneg} \label{D:der} For any dga $(A=\oplus A^i,d_A)$, we 
have the dg Lie
algebra $L = \, {Der} \, A = \oplus {Der}^i\, A$ where
${Der}^i\, A = \{\text{derivations }
\theta : A \to A \text{ of degree } i\}$.  The bracket  is the graded
commutator: $[\theta, \phi] = \theta\phi -(-1)^{ij}\phi\theta$ for $\theta \in
{Der}^i\, A$ and $\phi \in {Der}^j\, A$.  We have the
differential $d_A \in {Der}^{1}\, A$ and $d_L(\theta) :=
[d_A,\theta] := d_A\theta - (-1)^i \theta d_A$, i.e., $d_L = ad(d_A)$.
\end{dfneg}
Note that $i$ ranges over all integers, not necessarily just positive or just
negative; later we will have to consider bounds.  Even if $A$ is of finite type,
Der$\, A$ need not be.

Of course, for any \dgL $L$, the homology $H(L)$ is again a graded Lie algebra, but there is more structure than that inherited for $L$  (see \ref{Linfty}).

\begin{exa}\label{D:tjder} For the special case of the Tate-Jozefiak resolution 
$(S Z,d)$ of $\HH$, we are interested in the 
sub--Lie algebra $L \subset \, {Der}\,  S Z$
consisting of all derivations which decrease weight.
\end{exa}
\begin{dfneg}\label{D:derL} For any \dgL $(L=\oplus L^i,d_L)$, we 
have the dg Lie
algebra ${Der}\, L = \oplus {Der}^i\, L$ 
where ${Der}^i\, L = \{\text{derivations }\theta : L^j\to  L^{j+i}\}$ 
with again the graded commutator bracket and the induced differential
$d(\theta) := [d_L,\theta] :=   d_L\theta - (-1)^i\theta d_L$.
\end{dfneg}
\subsection{ Differential graded commutative coalgebras and  \dgLs}

In order to dualize conveniently, and motivated by the case of the 
(admittedly not commutative) chains of a topological space, we 
cast our definition of dgcc in the following form.  

\begin{dfn}\label{D:dgcc} 
A {\bf dgcc}  {\bf d}ifferential {\bf g}raded ({\bf c}ommutative {\bf c}oalgebra) consists of
\begin{enumerate}
\item a graded vector space $C = \{ C^n$, $n \in Z\} $, \item a
differential $d: C^n \to C^{n+1}$ and \item a differential map
$\Delta : C \to C \otimes C$ called a {\bf comultiplication}
which is associative and graded commutative and \item a
counit $\epsilon : C \to \Q$ such that $(\epsilon \otimes 1)\Delta
= (1 \otimes \epsilon)\Delta = id_C$.  
\end{enumerate}

\noindent
We say $C$ is {\bf augmented} if there is given a dgc map $\eta: \Q \to C$
and that $C$ is
{\bf connected} if $C^n = 0$ for $n < 0$ and $C = \Q$.
\end{dfn}

The dual ${Hom}\, (C,\Q)$ of a connected dgcc is a
connected dgca with $A^{-n} = {Hom}\, (C^n,\Q)$.  Conversely, if $A$ is a dgca
of finite type (meaning that
each  $A^{-n}$ is finite dimensional over $\Q$), then ${Hom}\,(A,\Q)$ inherits the structure of a dgcc.


Most of our constructions make use of the tensor algebra and tensor coalgebra; we pause
to review structure, notation and nomenclature.

Let $M$ be a graded $\Q$-vector space; it generates free objects as 
follows:

\subsection{ The tensor algebra and free Lie algebra}.
\label{TA}

The tensor algebra $T(M) = \underset{n\geq 0}{\oplus} M^{\otimes n}$ 
where $M^{\otimes 0} = \Q$ and $M^{\otimes n} := M \otimes \dots \otimes M$
 with $(a_1 \otimes \dots \otimes a_p)(a_{p+1} \otimes \dots \otimes
a_{p+q}) = a_1 \otimes \dots \otimes a_{p+q}$ is the free  associative
algebra generated by $M$. It is the free graded associative
algebra generated by $M$ with respect to the {\bf total} grading, which 
is $\Sigma (|a_i|)$ where $|a_i|$ is the grading of $a_i$ in $M$.

The free graded Lie algebra $L(M) $ can be realized  as a Lie sub-algebra
of $T(M)$ as follows: 
Regard $T(M)$ itself as a graded Lie algebra under the {\it graded commutator}
$$[x,y] = x\otimes y - (-1)^{deg \, x deg \, y}   y \otimes x,$$
then the Lie sub-algebra generated by $M$ is (isomorphic to) the free
Lie algebra $L(M)$.  In characteristic 0, 
this can usefully be further analyzed (cf. Friedrichs' Theorem \cite{perrin,serre:lalg})
by considering
$T(M)$ as a Hopf algebra with respect to the \emph{unshuffle} diagonal 
$$\bigtriangleup (a_1 \otimes \dots \otimes a_n) = \underset{(p,q)-\text{shuffles\,}\sigma}
{\Sigma} (-1)^\sigma
(a_{\sigma (1)} \otimes \dots \otimes (a_{\sigma (p)}) \otimes (a_{\sigma (p+1)}
\otimes \dots \otimes a_{\sigma (n)})$$
 where $\sigma$ being an unshuffle (sometimes called a shuffle!) means
$\sigma (1) < \sigma (2) < \dots < \sigma (p)$ and $\sigma (p+1) < \dots <
\sigma (n)$ and $(-1)^\sigma$ is the sign of the graded permutation.
The free graded Lie algebra $L(M)$ is then isomorphic to 
the algebra of primitives, $P(T(M))$,
i.e., $x \in T(M)$ is {\bf primitive} if and only if $\bigtriangleup x =
x \otimes 1+1 \otimes x$.  The Hopf algebra $T(M)$ is isomorphic
to the universal enveloping algebra $U(L(M))$
 \cite{quillen:qht}.

{\bf Convention}.  The {\bf shift} functor $s$ on differential graded objects shifts
degrees down by one for algebras and up by one for coalgebras: 
\begin{gather*} 
s: A^{q+1} \simeq  (sA)^q and\\ 
s : C^{q-1} \simeq  (sC)^q,
\end{gather*}
while the differential anti-commutes with $s$, i.e. $ds=-sd.$
\begin{dfneg}\label{D:L(C)}
 \cite{quillen:qht}  p. 290.  For any augmented differential graded coalgebra
(dgc) $C$ with commutative diagonal $\Delta$, let $L(C)$ be the
differential graded Lie algebra which is the free graded Lie
algebra on $s {\bar C} = s(C/\Q)$ with differential $d$ which is the
derivation determined by $d_C$ and $\Delta$, i.e.,
 $$ d(sc) = -s(d_Cc) + \frac{1}{2} \Sigma(-1)^{deg \, c_i} [sc_i,sc'_{i}] $$ 
where $\Delta c = \Sigma c_i \otimes c'_{i}$ 
or, in Heyneman-Sweedler notation,
$\Delta c = \Sigma c_{(1)} \otimes c_{(2)}$,  omitting the terms
 $1\otimes c$ and $c\otimes 1$.
\end{dfneg} 

An enlightening alternative description is that $L(C)$ can be identified with $P
\Omega C$, the space of primitive elements in the cobar construction
 on the commutative
coalgebra $C$ \cite {adams:cobar}.  That is, $\Omega C$ is the graded Hopf 
algebra $T(s \bar C)$ described above with differential which is the
derivation determined by 
$$
d(sc) = -s(d_Cc) + \Sigma (-1)^{deg \, c_i} sc_i \otimes sc'_{i}.
$$

In particular, we wish to apply this construction $L$ to the dual $\HH_*$ of a
graded commutative algebra $\HH$. Thinking of $\HH$ as a cohomology algebra and
assuming $\HH$ connected of finite type, the graded linear dual $\HH_*$  
we  then regard as homology,

The coalgebra grading is given by $(\HH_*)^{-n} = {Hom}\, (\HH^n,\Q)$, which
can also be written as $\HH_n$. 

By abuse of notation, we
will write $L(\HH)$ instead of $L(\HH_*)$.  The differential above then becomes
$$
d(s \phi)(su\otimes sv) =\pm \phi (uv)
$$
for $u,v \in \HH$ and $\phi : \HH^n \to \Q$.

{\it Henceforth, we rely heavily on our assumption that all (non--differential) cga's $\HH$ 
are simply connected  of finite type. Thus $L(\HH)$ will be of finite type.}


Models for dgc algebras can  be obtained conveniently via the functor  $C$
adjoint to $L$.  For ordinary Lie algebras $L$, the construction  $C(L)$ reduces
to the complex used by Cartan--Chevalley--Eilenberg \cite{CE} and Koszul
\cite{koszul:liecoh} to define the homology of Lie algebras.

\subsection{ The tensor coalgebra and free Lie coalgebra}
\label{TC}

Again let $M$ be a graded $\Q$-vector space.  Consider the
tensor coalgebra $T^c(M) = \underset{n\geq 0}{\oplus} M^{\otimes n}$ with
the \emph{deconcatenation} or \emph{cup coproduct}
$$\Delta (a_1 \otimes \dots \otimes a_n) = \underset{p+q=n}{\Sigma} (a_1
\otimes \dots \otimes a_p) \otimes (a_{p+1} \otimes \dots \otimes a_n).$$

\begin{rmk} Some authors refer to $T^c(M)$ as the cofree associative unitary coalgebra cogenerated by $M$,
but it is cofree only as a pointed irreducible coalgebra. On the other hand, others refer to the  `tensor coalgebra' meaning the completion $\hat T^c(M)$ which is  the cofree associative unitary coalgebra, see below and \cite{walter:handbook} Remark 3.52.  \end{rmk}

As for the tensor algebra, the total grading
$\Sigma (|a_i| + 1)$ where $|a_i|$ is the grading of $a_i$ in $M$
is usually most important, but the number of tensor factors provides
a useful decreasing filtration: $\mathcal F_pT^c(M) =
\underset {n\geq p}{\oplus}M^{\otimes n}$.
For our purposes, it is important to pass to the completion $\hat T^c(M)$
of $T^c(M)$ with respect to that filtration:
$\hat T^c(M) \cong 
\underset{n\geq 0} {\prod}M^{\otimes n}$. The diagonal $\Delta$ on $T^c(M)$
extends to $\Delta: \hat T^c(M)\to \hat T^c(M)\hat \otimes \hat T^c(M)$,
the completed tensor product being given by the completion with respect to
$F_n ( {\hat \otimes} ) = \underset{p+q\geq n}{\oplus} F_p  \otimes F_q$. 
 With this structure, $\hat T^c(M)$ is 
universal with respect to the category of 
cocomplete connected graded 
coalgebras (cf.\cite{walter:handbook} for a very thorough treatment or  \cite {serre:lalg} for the algebra version
 and \cite  {quillen:qht} Appendix A for connected coalgebras).

\begin{dfn}\label{D:cocomplete}
A {\bf cocomplete} dgc $(C,d,\Delta)$  consists of
\begin{enumerate}
\item a decreasingly filtered dg vector space $C$ which is
cocomplete (i.e., $C = \underset{\leftarrow}{lim}\, C/ F_p C$), and
\item  a filtered (= continuous) chain map $\Delta : C \to C {\hat \otimes} C$ 
which is associative.
\end{enumerate}
\end{dfn}
\noindent
Morphisms of cocomplete dgcc's respect the given filtrations.

Thus $(\hat T^c(M),\Delta)$ is the cofree graded cocomplete associative 
coalgebra cogenerated
by $M$ (cf. \cite{michaelis}).  That is,  $(\hat T^c(M),\Delta)$ has the following 
universal property:
$$
Coalg(C,\hat T^c(M)\cong    Hom(C,M)
$$
for 
cocomplete associative
coalgebras $C$, where $Coalg$ denotes morphisms of such coalgebras while
$Hom$ denotes linear maps of $\Q$-vector spaces.
In other words, the functor  $(\hat T^c(M),\Delta)$ is adjoint to the forgetful
functor from  graded cocomplete associative 
coalgebras to graded vector spaces.

A major attribute of cocomplete coalgebras and
$\hat T^c(M)$ in particular is that their space of group-like elements,
$\mathcal G C:=\{c\vert \Delta c = c\hat\otimes c\}$ need not be spanned
by $1\in \Q$.  There
is the obvious $1 \in \Q = M^{\otimes 0}$ with $\Delta 1 = 1
\otimes 1$ but for any $p \in M$, the element $q = 1 + p + p
\otimes p + p \otimes p \otimes p + \dots$ also has $\Delta q = q
\otimes q$.

The cofree graded Lie coalgebra can be realized   as
a quotient of $T^c(M)^+$, where $+$ denotes the part of strictly
positive $\otimes$--degree, i.e. $F_1$. The tensor coalgebra $T^c(M)$ can be
given a Hopf algebra structure by using the shuffle
multiplication, i.e., 
$$ (a_1 \dots \otimes a_p) * (b_1 \otimes \dots \otimes b_q) = 
\underset{\sigma}{\sum} (-1)^\sigma c_{\sigma(1)} \otimes \dots
\otimes c_{\sigma(p+q)} $$ 
where $\sigma$ is a $(p,q)$-shuffle
permutation as above and $c_1 \otimes \dots \otimes c_{p+q}$ is
just $a_1 \otimes \dots \otimes a_p \otimes b_1 \otimes \dots
\otimes b_q$.  The Lie coalgebra $L^c(M)$ consists of the
indecomposables of this Hopf algebra, i.e.,
$T^c(M)^+/T^c(M)^+*T^c(M)^+$.  The Hopf algebra $T^c(M)$ is,
provided $M$ is 
 the universal enveloping coalgebra of
$L^c(M)$; the associated graded  of $T^c(M)$ is, as algebra,
isomorphic to $SM)$, the free graded commutative
algebra on $M$.

This discussion can be `completed' by using $\hat T^c(M)^+$ and
the desired universal property follows:
$$
Liecoalg(C,\hat L^c(M)\cong    Hom(C,M)
$$
for  cocomplete Lie coalgebras $C$.
The cofree graded cocomplete commutative coalgebra $S^c(M)$ generated by $M$
is the maximal commutative sub--coalgebra of $T^c(M)$, i.e., the
cocomplete sub--coalgebra of all graded symmetric tensors (invariant under
signed permutations).  The
cofreeness of $S^c(M)$ means that a coalgebra map $f$ from a
connected commutative coalgebra $C$ into $S^c(M)$ is determined by the
projection $\pi f : C\to M$ and the same is true with respect to
coderivations $\theta:S^c(M) \to S^c(M)$: $\theta$ is 
determined by the projection $\pi\theta: S^c(M) \to M$.

Recall that a {\bf  coderivation} $\theta$
  of a coalgebra $C$ is a linear map
 $\theta:C\longrightarrow C$ such that
$(\theta\otimes 1 + 1\otimes \theta)\Delta = \Delta \theta$.

\subsection{ The standard construction $C(L)$}
\label{C(L)}
The Cartan--Chevalley--Eilenberg chain complex
\cite{CE,koszul:liecoh} generalizes easily to graded Lie algebras and even
\dgLs.

\begin{dfn} \label{D:C(L)} (\cite{quillen:qht} p. 291).
 Given a \dgL $(L,d_L)$, let $C(L)$ denote the
free cocomplete commutative coalgebra $S^c(sL)$ with total differential
cogenerated  as a graded coderivation
by $d_L$ and $[\ ,\ ]$, meaning that it is the graded coderivation such that:
$$
d(s \theta) = -s(d_L \theta)$$
and
$$
 d(s \theta \wedge s \phi) = s[\theta,\phi] - s(d\theta)\wedge s\phi
 - (-1)^{|s\theta|} s\theta\wedge s(d\phi)
$$
(here $s \theta \wedge s \phi$ is the symmetric tensor 
$s\theta\otimes s\phi+(-1)^{|s\theta|\,|s\phi|}s\theta\otimes s\phi).$
\end{dfn}
If $L$ is ungraded, $C(L)$ is precisely 
the Cartan--Chevalley--Eilenberg chain complex
\cite{CE,koszul:liecoh} and $H(C(L))$ is the Lie algebra
homology $H_*^{Lie} (L)$.  We will exhibit
this in more detail below for the cochain complex and cohomology.

\begin{dfn}\label{D:hdgl}
The {\bf homology} $H^{dg\ell }(L)$ is the graded coalgebra
$H(C(L),d_C)$ with the grading and comultiplication inherited from $C(L)$.  The
decoration $H^{dg\ell }$ indicates this is the homology of $L$ qua differential
Lie algebra, i.e., in the sense of category theory or homological algebra.
\end{dfn}
\begin{thm}\label{T:adjoint}
 (\cite{quillen:qht} Appendix B) $C$ and $L$ are adjoint
functors between the homotopy category of \dgLs with $L_q = 0$ for $q \leq 0$
and the homotopy category of simply connected dgcc's.  The adjunction morphisms
$\alpha:LC(L) \to L$ and $\beta:C\to CL(C)$ induce isomorphisms in
homology.
\end{thm}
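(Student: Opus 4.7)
The plan is to split the statement into two halves: first the categorical adjunction $\mathrm{Hom}_{\mathrm{dgL}}(L(C),L) \cong \mathrm{Hom}_{\mathrm{dgcc}}(C,C(L))$, and then the quasi-isomorphism claims for the unit $\beta$ and counit $\alpha$. For the adjunction itself, I would exploit the universal properties already collected in \S\ref{TA}--\S\ref{TC}. A dgL map out of $L(C) = \mathrm{FreeLie}(s\bar C)$ is, by freeness, the same as a degree zero linear map $f : s\bar C \to L$ whose unique extension to a Lie derivation commutes with the differentials; unravelling this with the formula for $d_{L(C)}$ in Definition \ref{D:L(C)} turns the compatibility into a twisting-cochain equation coupling $d_C, \Delta, d_L$ and $[-,-]$. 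Dually, a dgcc map into $C(L) = S^c(sL)$ is, by cofreeness of $S^c$, determined by its projection $\pi: C \to sL$, and the chain map condition against the coderivation of Definition \ref{D:C(L)} is exactly the same twisting-cochain equation. Both Hom-sets thus match the set of Maurer--Cartan elements in the convolution \dgL $\mathrm{Hom}^*(s\bar C, L)$, naturally in $C$ and $L$. The unit $\beta$ and counit $\alpha$ are then obtained by applying this bijection to the identities of $L(C)$ and $C(L)$.

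For the quasi-isomorphism claims, I would argue by a filtration/spectral sequence comparison. Filter $C(L)$ by wedge (symmetric-tensor) length and $L(C)$ by bracket length; these induce compatible filtrations on $CL(C)$ and on $LC(L)$. Under the standing connectivity hypotheses ($L^q = 0$ for $q \leq 0$ and $C^0 = \Q$, $C^1 = 0$), every fixed total degree meets only finitely many filtration strata, so the associated spectral sequences converge strongly. On the $E^0$/$E^1$ page both composites reduce to the Koszul-type complex pairing the cofree cocommutative cogenerator with the free Lie generator, and the maps $\beta$, $\alpha$ become the canonical edge inclusion/projection; acyclicity of this relative complex, established by an explicit contracting homotopy (the classical Cartan--Chevalley--Eilenberg/Koszul argument for the Lie--commutative Koszul pair), forces $\beta$ and $\alpha$ to be $E^1$-isomorphisms and hence quasi-isomorphisms after passing to the abutment.

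The hard part is the convergence/Koszul-acyclicity step. Absent connectivity, the wedge- and bracket-length filtrations in $CL(C)$ and $LC(L)$ are unbounded in each total degree, and the spectral sequence can fail to converge; this is exactly why the theorem restricts to $L^q = 0$ for $q \leq 0$ and to simply connected $C$. The cleanest way to handle this is to verify directly that on each fixed total degree the filtration has only finitely many nonzero quotients, then invoke the bar/cobar Koszul acyclicity for the $(\mathrm{Lie}, \mathrm{Com})$ operadic pair in the form already worked out in Quillen, Appendix B. The adjunction bookkeeping and the construction of $\alpha$, $\beta$ as cogenerated/generated by natural inclusions is routine once the conventions on $s$ and the signs in Definitions \ref{D:L(C)} and \ref{D:C(L)} are pinned down, so the substantive content is concentrated in verifying that contracting homotopy and its compatibility with the filtrations.
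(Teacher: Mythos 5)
Your outline is correct and follows the same route as the source the paper cites for this result (Quillen, Appendix B, to which the paper defers without giving its own proof): the adjunction via freeness/cofreeness reducing both Hom-sets to twisting cochains in the convolution Lie algebra, and the quasi-isomorphism of $\alpha$ and $\beta$ via the bracket-/wedge-length filtration, whose convergence under the stated connectivity hypotheses and whose $E_1$-level Koszul acyclicity are exactly the spectral-sequence mechanism the paper itself rehearses in \S\ref{QMM}. Nothing essential is missing.
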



Extending the terminology from dgca's (as in \cite{baues-lemaire}), we speak 
of $\alpha:LC(L)\to L$
 as a {\bf model} for $L$ and of $\beta:C\to CL(C)$ as a {\bf model} for $C$.  
If $C=C(X)$ is a commutative
chain coalgebra over $\Q$ of a simply connected topological space $X$, we speak
of $C\to L(C)$ as a \dgL model for $X$.

We will often find it useful to replace $L$ by a simpler \dgL $K$ with the same
homology and then will want to compare $C(L)$ and $C(K)$.


\begin{thm}\label{T:Cquism}  \cite{quillen:qht}.  If $f: L \to  K$ is a map of 
dg Lie algebras which are positively graded 
and $H(f) : H(L) \simeq  H(K)$, then the induced map
$H(C(f)) : H^{dg\ell }(L) \to H^{dg\ell }(K)$ is an isomorphism; i.e., $C(L)
\to  C(K)$ is a homology equivalence/quasi-isomorphism.
\end{thm}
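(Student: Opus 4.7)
The plan is to use the standard spectral sequence associated to the ``polynomial'' (symmetric power) filtration of $C(L)=S^c(sL)$, and then invoke the comparison theorem.

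First I would decompose the Quillen differential on $C(L)$ from Definition \ref{D:C(L)} as $d=d_1+d_2$, where $d_1$ is the coderivation cogenerated by the unary piece $d_L\colon sL\to sL$ and $d_2$ is the coderivation cogenerated by the binary piece $[\,,\,]\colon S^2(sL)\to sL$. Because coderivations of $S^c(sL)$ are determined by their projection to $sL$, the splitting is unambiguous. By inspection $d_1$ preserves the symmetric power $S^n(sL)$ while $d_2$ sends $S^n(sL)$ to $S^{n-1}(sL)$, so the increasing filtration $F_p C(L)=\bigoplus_{n\le p}S^n(sL)$ is preserved by $d$. A map $f\colon L\to K$ of \dgLs induces a map $C(f)\colon C(L)\to C(K)$ of filtered complexes.

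Next I would compute the associated spectral sequence. On the $E_0$ page only $d_1$ survives, so $E_0^{p,\ast}=(S^p(sL),d_1)$. Since the ground field has characteristic zero, symmetric powers commute with homology via the standard averaging idempotent on tensor powers, giving
\[
E_1^{p,\ast}\;\cong\;S^p\bigl(sH(L)\bigr),
\]
and analogously for $K$. The map induced by $C(f)$ on $E_1$ is $S^c(sH(f))$, which is an isomorphism as soon as $H(f)$ is.

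For convergence, the positivity hypothesis on $L$ ensures that after the shift $sL$ is concentrated in sufficiently high degrees that $S^n(sL)$ lies in total degrees growing with $n$. Hence in each fixed total degree of $C(L)$ only finitely many symmetric powers contribute, the filtration is bounded in each degree, and the spectral sequence converges strongly to $H^{dg\ell}(L)$; the same holds for $K$. By the classical Eilenberg--Moore/Cartan--Eilenberg comparison theorem, an isomorphism on $E_1$ propagates to $E_\infty$ and thus to the abutment, giving $H(C(f))$ an isomorphism as claimed.

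The main obstacle is the convergence bookkeeping: pinning down precisely how the connectivity/positivity assumption on $L$ translates through the shift $s$ into a per-degree bound on the filtration. The clean splitting $d=d_1+d_2$ and its compatibility with the filtration follow from Definition \ref{D:C(L)}, but the identification $E_1\cong S^c(sH(L))$ requires being careful with Koszul signs and with the char $0$ averaging, and the comparison theorem requires verifying strong convergence for both filtered complexes rather than only one.
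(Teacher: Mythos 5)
Your argument is correct and is essentially the one the paper intends: Theorem \ref{T:Cquism} is cited to Quillen, and the paper's own Section \ref{QMM} sets up exactly your filtration (by tensor/symmetric-power degree, with $E_0=(S^c(sL),d_L)$ and $E_1=C(H(L))$ via the characteristic-zero identification $H(S^p(sL))\cong S^p(sH(L))$) and uses it to deduce the companion statement Theorem \ref{T:Cf}. Your convergence bookkeeping via the positivity of $L$ and the degree shift is the right way to justify the comparison step, so nothing further is needed.
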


To compare the construction for more general \dgLs will be important for our
homotopy classification.

\subsection{ The Quillen and Milnor/Moore et al spectral sequences
   \cite{ quillen:qht,
milnor:BG, moore:alghomologique}}\label{QMM}

The coalgebra $C(L)$ is equipped with a natural increasing filtration, the
tensor degree, i.e., $s \theta_1 \otimes \dots \otimes s \theta _n \text{ (where}
\ \theta _i \in L)$ has filtration $p\geq n$.  The associated spectral sequence has
\begin{align}
E &= (S^c(sL),d_L) \quad \text{ so that}\\
E_1 &= (C(H(L)),d_1 = [\ ,\ ])
\end{align}
and hence  $E_2$ is the Lie algebra homology of $H(L)$, while the spectral sequence
abuts to $H^{dg\ell}(L)$.

Compare the spectral sequence 
for relating the homology of a loop space $\Omega X$ to that of $X$
as its classifying space. In that situation, 
$E_2$ is the associative algebra homology of $H(\Omega X)$, that is,
$Tor \, H^{(\Omega X)}(\Q,\Q)$, the homology of $T^c(s{\bar
H}(\Omega X))$, with $d_1$ determined  by the loop
multiplication $m_*$.  But over the rationals, Quillen has constructed
a dg Lie algebra $\lambda_X$ such that $H(\lambda_X) \simeq  \pi_*(\Omega X)
\otimes \Q$ and $C(\lambda_X)$ is homotopy equivalent to the coalgebra
of chains on $X$.  Moreover, over the rationals, Serre \cite{serre:classes}
has shown
that $H(\Omega X) \simeq  U(\pi_*(\Omega X) \otimes \Q)$, the
universal enveloping algebra on the Lie algebra $\pi_*(\Omega X)
\otimes \Q$.  Thus comparing Quillen and Milnor--Moore at the
$E_2$ level we have $$ H^{Lie}(\pi_*\Omega X \otimes \Q) \simeq
H^{assoc} (U( \pi_*\Omega X \otimes \Q)) $$ by a well--known
result in homological algebra, while the spectral
sequence abuts to $$ H(C(\lambda_X)) \simeq  H(X).  $$

To fix ideas and for later use, we consider a special case  in which
$H(L) = L(V)$, the
free Lie algebra on a positively graded vector space $V$.  Since $H(L)$ is free,
we can choose representative cycles and hence a \dgL map $H(L) \to L$ which is a
homology isomorphism.  Thus we have isomorphic spectral sequences, but for
$H(L)$ the spectral sequence collapses: $E_\infty \approx E_2 \approx
H(S^csH(L))$.  That is, since $d_1$ is $[\ ,\ ]$ and $H(L)$ is free, only the
$[\ ,\ ]$-indecomposables of $H(L)$ survive, i.e.,
$$
H^{dg\ell }(L) \approx H(S^c sH(L)) \approx \Q \otimes sV
$$
with $sV$ primitive.

Use of this spectral sequence implies (compare Quillen, Appendix B \cite{quillen:qht}: 


\begin{thm}\label{T:Cf} If $f : L \to K$ is a map of \dgLs 
which are connected and $H(f) : H(L) \spprox H(K)$, then
$H(C(f))$ is an isomorphism.
\end{thm}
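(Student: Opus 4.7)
The plan is to run the natural wedge-length spectral sequence from \S\ref{QMM} in parallel for $L$ and $K$, and compare them via $C(f)$. Filter $C(L) = S^c(sL)$ by $F_n C(L) = $ the sub-coalgebra spanned by products $s\theta_1 \wedge \cdots \wedge s\theta_k$ with $k \le n$. By Definition \ref{D:C(L)}, the cogenerating coderivation has two parts: one induced by $d_L$, which preserves wedge length, and one induced by the bracket $[\,,\,]$, which strictly lowers wedge length by one. The associated graded $E^0$ is therefore $S^c(sL)$ with differential acting only factorwise by $d_L$, and $f$ induces a morphism of filtered complexes, hence a morphism of spectral sequences.

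Next I would identify the $E^1$-page and exploit characteristic zero. Because $\mathbb{Q}$ is a field of characteristic $0$, the graded symmetric power functor $S^c$ commutes with homology (a graded K\"unneth argument combined with taking $\Sigma_n$-invariants on tensor powers, which is exact in characteristic $0$). Thus $E^1(L) \cong S^c(sH(L)) = C(H(L))$ with $d^1$ induced by the bracket on $H(L)$, and similarly for $K$. The hypothesis that $H(f)$ is an isomorphism of graded Lie algebras, together with the functoriality of $S^c$, gives that $E^1(f)\colon E^1(L) \to E^1(K)$ is an isomorphism. From here $E^r(f)$ is an isomorphism for every $r \ge 1$, and in particular $E^\infty(f)$ is an isomorphism.

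The main obstacle, and the precise place where the connectivity hypothesis bites, is to pass from the $E^\infty$-isomorphism to the isomorphism $H(C(f))$. The filtration by wedge length is increasing and exhaustive with $F_0 = \mathbb{Q}$, so strong convergence requires that in each total degree only finitely many filtration strata contribute nontrivially (or, at worst, that the resulting filtration on $H(C(L))$ is complete Hausdorff in the sense appropriate to the cocomplete coalgebra convention of Definition \ref{D:cocomplete}). The connectivity assumption places $sL$ in non-negative cohomological degrees, so $s\theta_1 \wedge \cdots \wedge s\theta_k$ of fixed total degree $n$ forces a finite bound on the wedge length $k$, giving the desired degree-wise boundedness (after passing to the completion $\hat S^c(sL)$ implicit in Definition \ref{D:C(L)} when needed). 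With strong convergence of both spectral sequences in hand, the standard Eilenberg--Moore comparison theorem for filtered spectral sequences upgrades the $E^\infty$-isomorphism to the required isomorphism $H^{dg\ell}(f)\colon H(C(L)) \xrightarrow{\cong} H(C(K))$.
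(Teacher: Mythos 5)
Your proof is correct and follows essentially the same route the paper intends: Theorem~\ref{T:Cf} is stated immediately after \S\ref{QMM} with the remark ``Use of this spectral sequence implies (compare Quillen, Appendix~B),'' i.e.\ the comparison of the tensor-degree (wedge-length) spectral sequences with $E_1 = C(H(L))$, which is exactly your argument. Your added care about identifying $E^1$ via exactness of $S^c$ in characteristic zero and about convergence (degree-wise boundedness of the wedge-length filtration from connectivity) fills in precisely the details the paper delegates to Quillen.
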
 


\subsection{ The standard construction $A(L)$}
\label{A(L)}
The dual of $C(L)$ is a dgca which we denote by $A(L) = {Hom}\, (C(L),\Q)$.


We can interpret $A(L)$ in terms of  the (set of) alternating forms on  $L$:  for any
L--module $M$, a linear homomorphism $C(L) \to M$ of degree $q$
can be regarded as an alternating multilinear form on $L$.  
(Only if $C(L)$ is of finite dimension in each degree,  e.g. if $L$ 
is non-negatively graded of
finite type, should we think of
$A(L)$ as $S(sL^{\dual})^2.$ 
\footnote{With the exception of (co)algebras that are interpreted as, vice versa,
(co)homology, we will usually denote duals by $^\dual.$}
The coboundary on $A(L)$ can then be written explicitly as 
\begin{align*}
\quad (d_Af) (X_1,\dots,X_n) = \Sigma &\pm f(\dots,dX_i,\dots)\\
 &\pm \Sigma f([\hat X_i,\hat X_j],\dots,X_i,\dots,X_j,\dots)\\ 
&\pm \Sigma X_i \circ f(X_1,\dots,X_i,\dots).
\end{align*} 
For future reference,
we write $d_A = d' + d''$ corresponding to the first term and the
remaining terms above.

If $L$ is an ordinary (ungraded) Lie algebra, $d' = 0$ and $d''$ is (up to sign)
the differential used by Chevalley--Eilenberg and Koszul.

\begin{dfn}\label{D:cohdgl}
For a \dgL $L$, the {\bf cohomology} $H^*_{dg\ell }(L)$ is the
algebra $H(A(L)) = H({Hom}\, (C(L),\Q)$.
\end{dfn}
The adjointness of $L$ and $C$ will show that, given suitable 
finiteness conditions, 
$A(L(\HH))$ is a model for $\HH$, i.e., a (possibly non--minimal) 
$(SZ,d)$ resolution of $\HH$ as in Chapter \ref{models}.  Because of the shift in grading and the way degrees add, $L$ must be of finite
type and suitably bounded for $C(L)$ to be of finite type: $L^n =0$ for $n \leq 0$ or $n \geq 2.$ 
For example, if $\HH$ is simply connected and of finite type,
then $C(L(\HH))$ is of finite type.

\subsection{ Comparison of Der $ L$ and Der $ C(L)$}
\label{comparison}

We are interested in comparing perturbations of $A(L)$  with the corresponding
changes in $L(\HH)$.

\begin{dfn}\label{D:Lperturb} A {\bf perturbation} of $L(\HH)$ is a Lie
derivation $p$ of the same degree as $d$ such that $p$ increases
bracket length by at least 2 and $(d+p)^2 = 0$.
\end{dfn}

We are interested in derivations of $A(\pi)$ where $\pi$ is a dg Lie
algebra.  We will be using $L$ to denote ${Der}\,\pi.$  Although
somewhat unfamiliar, the dg Lie algebra of coderivations of
$C(\pi)$ turns out to be more susceptible of straightforward
analysis.  

The graded space of all graded coderivations of $C$ will be denoted 
${Der}\, C$. Later we will examine $A(\pi)$ directly, under suitable
finiteness conditions.

\begin{dfn}\label{D:semidir} For a dg Lie algebra $\pi$, 
the {\bf semidirect product}
$s\pi \, \sharp \, {Der}\, \pi$ is, as $\Q$-vector space,
 $s\pi \oplus \, {Der} \, \pi$. As a  graded Lie algebra, 
it has $s\pi$ as an
abelian sub--algebra and ${Der}\, \pi$ as a subalgebra which
acts on $s \pi$ by derivations via $[\phi,s \theta] =
(-1)^{\phi} s\phi(\theta)$. The differential $d_\sharp$ is given by
 $d_\sharp (s\theta) = -sd_\pi\theta \oplus
ad \, \theta$ for $\theta \in \pi$, $\phi \in \, {Der}\,
\pi$.
\end{dfn}
\begin{thm}\label{T:semi-iso}
  For any dg Lie algebra $\pi$ with $\pi_i = 0$ for
$i \leq 0$, there is a canonical map
$$
\rho : s \pi \, \sharp \, {Der}\, \pi \to \, {Der} \, C(\pi)
$$
of dg Lie algebras.  If $\pi$ is free as a graded Lie algebra, then  $\rho$ is a
homology isomorphism.  (If $\pi$ is free on more than one generator, then
 $s \pi \, \sharp \, {Der} \, \pi \to \, {Der}\, \pi/ad\, \pi$ 
is also a homology isomorphism.)\end{thm}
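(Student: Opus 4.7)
The plan has four stages. First, I will construct $\rho$ using the cofreeness of $C(\pi)=S^c(s\pi)$ as a cocomplete commutative coalgebra cogenerated by $s\pi$: every graded coderivation of $C(\pi)$ is determined by its projection to $s\pi$, which I will call $p$. For $\theta\in\text{Der}^i\pi$, let $\rho(\theta)$ be the unique coderivation whose $p$-projection equals $s\theta s^{-1}$ on $s\pi$ and vanishes on $S^{\geq 2}(s\pi)$; for $s\phi\in s\pi$, let $\rho(s\phi)$ be the unique coderivation whose $p$-projection sends $1\in\Q$ to $s\phi$ and vanishes on $S^{\geq 1}(s\pi)$. Checking that $\rho$ respects brackets and differentials then reduces to projection comparisons: graded symmetry of the wedge forces $[\rho(s\phi_1),\rho(s\phi_2)]=0$, the outer--outer bracket projects to $s[\theta_1,\theta_2]s^{-1}$, the mixed bracket projects to $(-1)^{|\theta|}s\theta(\phi)$, and the two summands $-s\,d_\pi\phi$ and $\text{ad}\,\phi$ of $d_\sharp(s\phi)$ appear respectively as the $S^0$- and $S^1$-components of $p\circ[d_C,\rho(s\phi)]$.

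Next, for the homology isomorphism when $\pi$ is free, my plan is to identify the target with a Chevalley--Eilenberg complex. The graded bijection $\text{Der}\,C(\pi)\cong\text{Hom}(C(\pi),s\pi)$ given by $D\mapsto p\circ D$ should carry $[d_C,-]$ to the CE coboundary on cochains of $\pi$ valued in the adjoint module $s\pi$; in the notation of Section \ref{A(L)}, the bracket piece of $d_C$ contributes the $d''$ summand while the internal $d_\pi$ contributes $d'$. This will produce
$$H(\text{Der}\,C(\pi))\;\cong\;H^{*}_{\text{Lie}}(\pi;\,s\pi).$$
When $\pi=L(V)$ is free, the augmentation $U(\pi)\to\Q$ admits the length-one Koszul resolution $0\to U(\pi)\otimes V\to U(\pi)\to\Q\to 0$, so $H^n_{\text{Lie}}(\pi;M)=0$ for every $n\geq 2$ and every coefficient module $M$. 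The surviving groups are $H^0_{\text{Lie}}(\pi;s\pi)=s\,Z(\pi)$ and $H^1_{\text{Lie}}(\pi;s\pi)\cong\text{Der}\,\pi/\text{ad}\,\pi$.

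The third stage is to compute $H(s\pi\,\sharp\,\text{Der}\,\pi)$ directly and match. The map $\text{ad}:s\pi\to\text{Der}\,\pi$ hidden inside $d_\sharp(s\theta)=-s\,d_\pi\theta\oplus\text{ad}\,\theta$ has kernel $s\,Z(\pi)$ and cokernel $\text{Der}\,\pi/\text{ad}\,\pi$, so once $[d_\pi,-]$ is absorbed the source has homology $s\,Z(\pi)$ in degree $0$ plus $\text{Der}\,\pi/\text{ad}\,\pi$ in degree $1$. Unwinding the definitions, $\rho$ sends these to precisely the corresponding summands of $H^0\oplus H^1$ in the target, completing the homology isomorphism. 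The parenthetical is then immediate: for $\pi$ free on more than one generator the center $Z(\pi)$ vanishes, so the $s\pi$-summand of $s\pi\,\sharp\,\text{Der}\,\pi$ is acyclic and the induced composition $s\pi\,\sharp\,\text{Der}\,\pi\to\text{Der}\,\pi\to\text{Der}\,\pi/\text{ad}\,\pi$ is a quasi-isomorphism.

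The main obstacle I expect is the identification of $[d_C,-]$ on coderivations with the CE coboundary on $\text{Hom}(C(\pi),s\pi)$. This demands careful bookkeeping of suspension signs, of the coderivation-versus-derivation distinction on the cofree commutative coalgebra $S^c(s\pi)$, and of how the two parts of $d_C$ separate along the filtration by symmetric degree so as to reproduce the $d'+d''$ decomposition of the CE coboundary recalled in Section \ref{A(L)}.
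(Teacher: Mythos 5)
Your construction of $\rho$ and your identification of $\mathrm{Der}\,C(\pi)$ with $\mathrm{Hom}(C(\pi),s\pi)$ carrying the Chevalley--Eilenberg differential for the adjoint module are exactly the paper's route, and the key input --- that a free graded Lie algebra has Lie-algebra cohomological dimension $1$, via the length-one resolution $0\to U(\pi)\otimes V\to U(\pi)\to\Q\to 0$ --- is the same fact the paper invokes (citing Hilton--Stammbach). The genuine gap is in stages two and three: your homology computations are valid only for the \emph{underlying graded} Lie algebra, i.e.\ when $d_\pi=0$. When $d_\pi\neq 0$ the short Koszul resolution is not a resolution by dg $U(\pi)$-modules, since $d_\pi V\not\subset V$ in general (the differential of a generator is an arbitrary element of $L(V)$), so it does not compute the cohomology of the total complex $d'+d''$. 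Likewise $H(s\pi\,\sharp\,\mathrm{Der}\,\pi)$ is \emph{not} $sZ(\pi)\oplus\mathrm{Der}\,\pi/\mathrm{ad}\,\pi$: those are only the cohomology of the tensor-degree part $\mathrm{ad}:s\pi\to\mathrm{Der}\,\pi$ of $d_\sharp$, and one must still take cohomology with respect to the induced action of $d_\pi$ --- indeed the parenthetical statement of the theorem asserts an equivalence with $\mathrm{Der}\,\pi/\mathrm{ad}\,\pi$ \emph{as a complex} with differential $[d_\pi,-]$, not with its underlying graded space. The phrase ``once $[d_\pi,-]$ is absorbed'' is precisely where an argument is missing.

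The repair is the one the paper makes explicit: filter both sides by internal degree (so that $d''$ preserves filtration degree and $d_\pi$ raises it), observe that the associated graded complexes are the CE complexes of the underlying graded free Lie algebra, so that your Koszul-resolution computation applies to the $E_1$ pages, which are therefore concentrated in tensor degrees $0$ and $1$ and on which $\rho$ induces an isomorphism ($sZ(\pi)$ matching $H^0$ and $\mathrm{Der}\,\pi/\mathrm{ad}\,\pi$ matching $H^1$, as you say); the comparison theorem for the resulting spectral sequences then yields the isomorphism on total homology. With that filtration step inserted, your argument closes, and the parenthetical claim follows from the exactness of $0\to s\pi\,\sharp\,\mathrm{ad}\,\pi\to s\pi\,\sharp\,\mathrm{Der}\,\pi\to\mathrm{Der}\,\pi/\mathrm{ad}\,\pi\to 0$ together with the acyclicity of $s\pi\,\sharp\,\mathrm{ad}\,\pi$ when $Z(\pi)=0$, exactly as you indicate.
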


\begin{proof} Since $C(\pi)$ is cofree on $s\pi$, a coderivation of $C(\pi)$ is
determined by its projection onto $s\pi$.  Thus ${Der}\, C(\pi)$ is
isomorphic to ${Hom}\, (C(\pi),s\pi)$.  Moreover, this is an isomorphism of
dg $\Q$--modules precisely if ${Hom}\, (C(\pi),s\pi)$ is given the
Chevalley--Eilenberg differential (3.11), where $\pi$ is regarded as a
$\pi$--module by the adjoint action, i.e., $ad\,x : y \to [x,y]$.  We can define
$\rho$ via this identification.  The coderivation $\rho(sx)$ is determined by
projecting $C(\pi)$ onto $\Q$ (by the counit) and then mapping to $sx$, while
for $\theta \in \, {Der}\, \pi$, correspondingly $\rho(\theta)$ 
is determined by projecting
$C(\pi)$ onto $s\pi$ and then composing with $s\theta : s\pi \to s\pi$.  A
careful check shows  $\rho$ is a map of \dgLs.

To calculate $H(\rho)$, notice that $\rho \vert {Der}\, \pi \subset
{Hom}\, (s\pi,s\pi)$ and,  in fact,  lies in the kernel of part of the
differential. That is,  for $sh \in {Hom}\, (s\pi,s\pi)$,
$$
d''sh(s[x_1,x_2]) = s(h[x_1,x_2] - [x_1,h(x_2)]+ (-1)^{ x_1  x_2}[x_2,h(x_1)])
$$
 which is zero if and only if $h$ is a (graded)
derivation of $\pi$.  Thus, with regard to tensor degree, $H(\rho)$ is an
isomorphism in tensor degrees $0$ and $1$.  If $d_\pi = 0$, then $\pi$ being free
implies $H^{Lie} (\pi;\pi) = 0$ above tensor degree $1$ 
\cite{hilton-stammbach}.

For a general \dgL $\pi$, we use a spectral sequence comparison.  Filter
${Der}\, C(\pi)$ by internal degree, i.e., $\theta \in {Der}\, C(\pi)
\spprox {Hom}\, (C(\pi),s\pi)$ is of filtration $\leq q$ if $\text{proj}\,
\circ \, \theta (sx_1 \,\wedge \dots \wedge \, sx_n)$ is of $\text{deg}\, \leq
\Sigma \, \text{deg}\, x_i + q$.  Thus the associated graded coalgebra has $d_\pi$
equivalent to zero and $\rho$ induces an isomorphism of $E_1$ terms for $\pi$
free.  Since $E_1$ is concentrated in complementary (= tensor) degrees 0  and  1,
the homology isomorphism follows.

Finally, if $\pi$ is free on more than one generator, then the center of $\pi$
is $0$, so that the sub--\dgL\quad $s\pi \, \sharp \, ad\, \pi$\quad of\quad 
$s\pi \, \sharp \, Der \, \pi$ \quad has $0$ homology.  The exact sequence
$$
0 \to  s\pi \, \sharp \, \text{ad}\, \pi \to  s\pi \, \sharp \, {Der}\, \pi
\to  \, {Der}\, \pi/ \text{ad} \pi \to 0
$$
now yields $H(s\pi \, \sharp \, {Der} \, \pi) \spprox H({Der}\,
\pi/\text{ad}\, \pi)$.
\end{proof}
\subsection{ $A(L(\HH))$ and filtered models}
\label{filtered model}

We will be interested in $A(L(\HH))$ {\bf only} if $\HH$ is simply connected 
$(\HH =
\underset{i>1}{\oplus \HH^i} )$ and of finite type.  
The dgc $C(L(\HH))$ is then of finite type
and we can then describe $A(L(\HH))$ usefully without dualizing twice.  Earlier,
we described the cofree Lie coalgebra as the indecomposable quotient of the tensor
coalgebra.  The construction $A(L(\HH))$ for {\bf simply connected 
$\HH$ of finite
type} can be described as the free commutative algebra on ($s$ of) the free 
Lie coalgebra on $\HH$.  A typical element of 
$A(L(\HH))$ is then a sum of symmetric
tensors  $a_1\wedge \dots \wedge a_n$ where each $a_i$ is a sum of terms
$$
\Sigma(-1)^\sigma [sh_{\sigma(1)} \vert \dots \vert sh_{\sigma(k)}].
$$
The differential $d$ is a derivation generated by the bracket in $L(\HH)$ and
$$
m : [h_1 \vert \dots \vert h_k] \to \
\Sigma(-1)^i [h_1 \vert \dots \vert h_i h_{i+1}\vert \dots \vert h_k].
$$
The obvious algebra map $A(L(\HH)) \to \HH$ determined by $[h] \to h$ and
$$
[h_1 \vert \dots \vert h_k] \to  0, \qquad \text{for } k > 1
$$
is a resolution with $k-1$ as the {\bf resolution} degree.  The total degree of
$[h_1 \vert \dots \vert h_n]$ is $\Sigma \, deg \, h_i - k+1$; thus the {\bf
weight} is $\Sigma \, deg \, h_i$.

The resolution $A(L(\HH)) \to \HH$ is the Tate--Jozefiak resolution if and only 
if $\HH$ has trivial products.  In general, the Tate--Jozefiak resolution is a
minimal model for $A(L(\HH))$, but more can be said because 
$A(L(\HH)) \to \HH$ is a
filtered model if we use the filtration by weight.

\begin{dfn}\label{D:filtmodel} A {\bf filtered model} $(S Z,d) \to
A$ of a dgca $A$ is a model with a dga filtration such that $E_1
(S Z,d) \spprox H(A)$ is concentrated in filtration $0$.
\end{dfn}
The comparison theorem of Halperin and Stasheff \cite{halperin-stasheff}
 generalizes directly 
to filtered models, given an equivalence $(S Z,d) \to A(L(\HH))$
with the Tate--Jozefiak model which respects filtration. In \S 6, we will
consider perturbations of $A(L(\HH))$ as an alternative
method of classifying homotopy types.  By using $A(L(\HH))$, the
problem can be further reduced to perturbations in ${Der}\,
L(\HH)$.  In particular, a perturbation which decreases weight by
$i$ will be represented in terms of maps of a subspace of
$\HH^{\otimes i+2}$ into $\HH$ of degree $-i$, which is suggestive of
a Massey product (as further explained in \S 8).

As one would hope, the classification does not depend on the model used.  As a
first step toward this independence, we compare Lie algebras of derivations.

\begin{thm}    Let $M \to A$ be the minimal model for a 
simply connected dgca $A$, free as a gca..  There is an induced map ${Der}\, M\,
\to \, {Der} \, A$  of dg Lie algebras which is a homology isomorphism (quasi-isomorphism).
\end{thm}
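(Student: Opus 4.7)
The strategy is to factor the comparison through an intermediate dg vector space of $\pi$-derivations,
\[
\mathrm{Der}^\pi(M,A) = \{\theta\colon M \to A \,\mid\, \theta(xy) = \theta(x)\pi(y) + (-1)^{|\theta||x|}\pi(x)\theta(y)\},
\]
equipped with the differential $\theta \mapsto d_A\theta - (-1)^{|\theta|}\theta d_M$. There are canonical dg comparison maps $\pi_*\colon \mathrm{Der}\,M \to \mathrm{Der}^\pi(M,A)$ sending $\theta \mapsto \pi\theta$, and $\pi^*\colon \mathrm{Der}\,A \to \mathrm{Der}^\pi(M,A)$ sending $\phi \mapsto \phi\pi$. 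The plan is to show both are quasi-isomorphisms of dg vector spaces; this then yields the desired isomorphism $H(\mathrm{Der}\,M) \cong H(\mathrm{Der}\,A)$ of graded Lie algebras, realized by the induced map between derivation dg Lie algebras (in the homotopy category of dgLs, where zigzags of quasi-isomorphisms are inverted). Up to first replacing $\pi$ by a surjective trivial fibration via the standard factorization axiom, we may assume $\pi$ is surjective with $K := \ker \pi$ an acyclic dg ideal of $M$.

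To prove $\pi_*$ is a quasi-isomorphism, use that $M = SZ$ is free as a gca, so every derivation and every $\pi$-derivation out of $M$ is determined by its restriction to the generating vector space $Z$. This gives identifications $\mathrm{Der}\,M \cong \mathrm{Hom}(Z,M)$ and $\mathrm{Der}^\pi(M,A) \cong \mathrm{Hom}(Z,A)$ as dg vector spaces, under which $\pi_*$ becomes $\mathrm{Hom}(Z,\pi)$. Since $\pi$ is a quasi-isomorphism and $\mathrm{Hom}(Z,-)$ preserves quasi-isomorphisms over the field $\mathbb{Q}$, so does $\pi_*$. Equivalently the kernel of $\pi_*$ is $\mathrm{Der}(M,K) \cong \mathrm{Hom}(Z,K)$, which is acyclic because $K$ is, and surjectivity of $\pi_*$ follows by lifting values on generators through the surjection $\pi$.

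For $\pi^*$, injectivity is immediate from surjectivity of $\pi$. For surjectivity on homology, start with a cycle $\theta \in \mathrm{Der}^\pi(M,A)$. Its restriction $\theta|_K\colon K \to A$ is a cycle in the complex $\mathrm{Hom}(K,A)$, which is acyclic over $\mathbb{Q}$ since $K$ is. Hence $\theta|_K = d\eta$ for some $\eta\colon K \to A$ of degree $|\theta|-1$. One then extends $\eta$ to a $\pi$-derivation $\tilde\eta\colon M \to A$ so that $\tilde\eta|_K = \eta$, and replaces $\theta$ by the cohomologous cycle $\theta - d\tilde\eta$, which vanishes on $K$ and therefore descends to a genuine derivation $\phi \in \mathrm{Der}\,A$ with $\pi^*\phi = \theta - d\tilde\eta$. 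This shows $\pi^*$ is onto in homology.

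The main obstacle is producing the extension $\tilde\eta$ in the previous step: one is free to prescribe $\tilde\eta|_Z$, but the induced values on $K$ via the Leibniz rule are then forced, and making them agree on the nose with the given $\eta$ requires care. The fix is an inductive construction along the weight/resolution filtration of $Z$, at each stage adjusting the choice of $\tilde\eta|_Z$ by a further coboundary within the acyclic kernel piece so as to match $\eta$ on the next layer of $K$. This uses the freeness of $M$ together with the acyclicity of $K$ in an essential way and is the delicate technical heart of the proof. Once that bookkeeping is carried out, both $\pi_*$ and $\pi^*$ are quasi-isomorphisms, and the commutator brackets on $\mathrm{Der}\,M$ and $\mathrm{Der}\,A$ are compatible with these identifications (since both $\pi_*$ and $\pi^*$ respect composition of derivations modulo the intermediate identifications), yielding the desired dg Lie algebra quasi-isomorphism.
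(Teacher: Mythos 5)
Your first half (that $\pi_*\colon \mathrm{Der}\,M \to \mathrm{Der}^\pi(M,A)\cong \mathrm{Hom}(Z,A)$ is a quasi-isomorphism) is sound and only uses freeness of $M$. The fatal gap is in the $\pi^*$ half: nowhere does your argument use the hypothesis that $A$ is free as a gca, and without that hypothesis the statement is false, so the argument cannot be completed as sketched. Concretely, take $A=\Q[x]/(x^2)$ with $|x|=2$ and $M=(S(x,y),\,dy=x^2)$ with $|y|=3$. Here $\pi$ is already surjective with acyclic kernel $K=(x^2,y)$, so no reduction is needed. The $\pi$-derivation $\theta$ of degree $-3$ with $\theta(y)=1$, $\theta(x)=0$ is a cycle representing a nonzero class in $H^{-3}(\mathrm{Der}^\pi(M,A))\cong H^{-3}(\mathrm{Der}\,M)\cong\Q$, while $\mathrm{Der}^{-3}A=0$, so $H(\pi^*)$ is not surjective. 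Running your argument on this $\theta$: one finds $\eta\colon K\to A$ of degree $-4$ with $\eta(x^2)=\pm1$ and $D\eta=\theta|_K$, but the required extension $\tilde\eta$ cannot exist, since a $\pi$-derivation of degree $-4$ is determined by its (necessarily zero) values on the generators $x,y$, forcing $\tilde\eta(x^2)=2\pi(x)\tilde\eta(x)=0\neq\eta(x^2)$. The structural reason is that $\theta|_K$ is not just a cycle in $\mathrm{Hom}_{\Q}(K,A)$: since $\pi(K)=0$, the restriction of any $\pi$-derivation to the ideal $K$ is $M$-linear ($\theta(km)=\theta(k)\pi(m)$), so the obstruction lives in the homology of $\mathrm{Hom}_M(K,A)$, and its vanishing does not follow from acyclicity of $K$ over the field. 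This is exactly where freeness of $A$ has to enter, and the sketched inductive adjustment of $\tilde\eta|_Z$ does not engage with it. Two further problems: the opening reduction ``we may assume $\pi$ is surjective'' is circular, since factoring $\pi$ replaces $M$ by a different free algebra $M'$ and comparing $\mathrm{Der}\,M$ with $\mathrm{Der}\,M'$ is an instance of the theorem being proved (the minimal model map into a free $A$ is, after splitting, an inclusion rather than a surjection); and $\mathrm{Der}^\pi(M,A)$ carries no Lie bracket, so even with both legs quasi-isomorphisms you obtain only a zigzag of chain complexes, not the asserted map of dg Lie algebras.

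For comparison, the paper's proof puts all of the weight on Sullivan's splitting theorem, which is precisely where freeness of $A$ does its work: $A\cong M\otimes C$ as dgcas with $C$ contractible and free on $Y$, whence $\mathrm{Der}\,M\cong\mathrm{Hom}(X,M)$ and $\mathrm{Der}\,A\cong\mathrm{Hom}(X\oplus Y,A)$; the comparison map is ``extend a derivation by zero on $Y$'' (an honest dgL map because $d$ preserves the factor $C$), and the quasi-isomorphism claim reduces to $H(\mathrm{Hom}(U,V))\cong\mathrm{Hom}(HU,HV)$ over a field together with acyclicity of $\mathrm{Hom}(Y,A)$. If you want to salvage your route, the missing lemma is that the cokernel of $\pi^*$ (equivalently, the relevant complex of $M$-linear maps out of $K$) is acyclic when $A$ is free, and proving that is essentially equivalent to the splitting.
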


\begin{proof} According to Sullivan \cite {sullivan:inf} p. ???, 
$A$ splits as 
$M \otimes C$ as dgcas with $C$ contractible.  
The algebras $M,\  C$ and $A$ are free on differential graded vector spaces $X, Y$ and $X\oplus Y$ respectively with $Y$ contractible.
We have a sequence of maps
$Der B  =$ $$ Hom (X , M) \to  Hom(X, M\otimes C =A) \to  Hom(X, A) \oplus Hom(Y, A ) = Hom(X\oplus Y, A) = DerA.$$
Since $X, Y$ and $X\oplus Y$ are graded vector spaces, we can use the identity
$H( Hom (U,V)) \simeq Hom(HU,HV)$ to conclude that each of the above maps is a quasi-isomorphism.
\end{proof}
Thus the \dgL ${Der}\, A$ is a `` homology invariant'' of the free
simply connected dga $A$; its cohomology does not depend on the choice of $A$.
We may always choose a model $L = s \pi \, \sharp \, {Der} \, \pi$ for
${Der}\, A\quad (\pi = {\tilde L}H(A)$ with suitable
differential), which will have finite type if ${dim}\, H(A) <
\infty$.

Notice further that if $A$ is a filtered model, then the map
$(S Z,d) \to A$ in
the theorem preserves the filtration; in particular, this is true for the weight
filtration.

Let $W_A \subset \, {Der} \, A$ denote the sub--\dgL of weight decreasing
derivations.  Now compare $W_{(S Z,d)} \to W_A$ as above.  If  $\theta \in
\, {Der} \, (B \otimes C)$ is weight decreasing, then so is $\phi$, since
$d$ preserves weight.  Thus we have:

\begin{thm}   If $S Z \to A$ is the minimal model for a
filtered free dgca $A$, then the induced map $W_{S Z} \to W_A$ is a
homology isomorphism.
\end{thm}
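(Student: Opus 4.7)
The plan is to mirror the proof of the preceding theorem, restricting attention at every step to the sub-dg vector spaces of weight-decreasing maps. Sullivan's splitting expresses $A$ as $SZ \otimes C$ with $C$ a free contractible dgca. The filtered model hypothesis on $A$ says that weight is compatible with both the algebra structure and the differential, and my first task is to arrange this splitting so that $C$ inherits its own weight grading and the inclusion $SZ \hookrightarrow SZ \otimes C = A$ is weight preserving. Writing $SZ = S(X)$, $C = S(Y)$ with $Y$ contractible, we then have $A = S(X \oplus Y)$, and each generating space inherits a weight grading.

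Under the standard identification of derivations of a free algebra with linear maps out of its generators, the induced map factors as
$$\mathrm{Der}\, SZ = \mathrm{Hom}(X, SZ) \;\longrightarrow\; \mathrm{Hom}(X, A) \;\hookrightarrow\; \mathrm{Hom}(X, A) \oplus \mathrm{Hom}(Y, A) = \mathrm{Der}\, A.$$
Each Hom space carries an induced weight filtration, and the subspace of weight-decreasing derivations in $\mathrm{Der}\, SZ$ (respectively $\mathrm{Der}\, A$) corresponds to the subcomplex of Hom-maps that lower target weight. So the induced map $W_{SZ} \to W_A$ factors through the two maps above restricted to weight-decreasing parts.

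For the first map, the inclusion $SZ \hookrightarrow A$ is a weight-preserving quasi-isomorphism (since $C$ is contractible), so the identity $H(\mathrm{Hom}(U,V)) \cong \mathrm{Hom}(HU, HV)$ applied to each subcomplex of maps dropping weight by at least $k$ gives a quasi-isomorphism; passing to the union over $k \geq 1$ (equivalently, taking the weight-decreasing part as a whole) preserves quasi-isomorphism because the weight filtration is bounded below in each topological degree. For the second map, I need $\mathrm{Hom}(Y, A)_{wd}$ to be acyclic. Since $Y$ is contractible and free on generators that themselves carry a weight grading, one can split $Y = Y' \oplus d Y'$ weight-preservingly; the contracting homotopy on $Y$ then induces a contracting homotopy on $\mathrm{Hom}(Y, A)$ that preserves the weight-decreasing subspace, giving acyclicity.

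The main obstacle will be the first step, namely arranging that the Sullivan splitting $A = SZ \otimes C$ is compatible with the weight filtration: one must build $C$ by adding acyclic pairs of generators in a fashion that respects weights, which requires unpacking the proof of Sullivan's splitting lemma in the filtered setting. Once this is done, everything else is a routine application of the Hom--homology identity and the preservation of quasi-isomorphisms under restriction to the weight-decreasing subcomplex, exactly paralleling the unfiltered argument used to establish the previous theorem.
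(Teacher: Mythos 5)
Your proposal follows essentially the same route as the paper: the paper's own argument is just the observation that the Sullivan splitting $A \cong SZ\otimes C$ and the $\mathrm{Hom}$-factorization from the preceding theorem restrict to the weight-decreasing sub-dg Lie algebras because the model map and the differentials respect the weight filtration. You are in fact more careful than the paper, which silently assumes the splitting can be chosen weight-compatibly and does not spell out the acyclicity of the $\mathrm{Hom}(Y,A)$ summand; the one cosmetic caveat is that the weight-decreasing part is a completion (a product over weights) rather than a union, though your boundedness remark makes the homology computation go through either way.
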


Our classification of homotopy types will proceed via the classification of
perturbations in such a way that it will depend on only the homotopy type of
$W_A$ as a \dgL and thus can be analyzed in terms of the minimal model or
$A(L(\HH))$.  The advantage of the latter is that we can further reduce the
problem to perturbations of $L(\HH)$ with respect to the bracket length as weight.

Recall $L(\HH)$ is a free \dgL naturally filtered by bracket length.  The
complementary degree is the sum of the degrees in $\HH$, which generates the
weights in $A(L(\HH))$; we refer to this sum as the weight in $L(\HH)$ also.

Thus a perturbation of $L(\HH)$ generates one in $A(L(\HH))$ and indeed we have a
natural map $W_{L(\HH)} \to W_{A(L(\HH))}$ of \dgLs.

\begin{thm}   For simply connected $\HH$ of finite type, the natural
map $W_{L(\HH)} \to  W_{A(L(\HH))}$ is a homology isomorphism.
\end{thm}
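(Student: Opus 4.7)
The plan is to deduce this from Theorem~\ref{T:semi-iso} applied to $\pi = L(\HH)$, which by Definition~\ref{D:L(C)} is free as a graded Lie algebra. That theorem gives a quism $\rho: s\pi \,\sharp\, {Der}\,\pi \to {Der}\,C(\pi)$. Because $\HH$ is simply connected and of finite type, so is $C(L(\HH))$, and coderivations dualize to derivations, yielding an identification ${Der}\,C(L(\HH)) \cong {Der}\,A(L(\HH))$ of \dgLss. Under this identification, the natural map of the statement factors as
\[
W_{L(\HH)} \hookrightarrow s\pi \,\sharp\, {Der}\,\pi \xrightarrow{\rho} {Der}\, C(\pi) \cong {Der}\,A(L(\HH)),
\]
followed by restriction to weight-decreasing parts; it therefore suffices to show that the restriction of $\rho$ to the weight-decreasing sub-\dgLL of the semidirect product is a quism and that this sub-\dgLL coincides with $W_{L(\HH)}$.

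First I would introduce a $\mathbb{Z}$-grading by \emph{weight change}: in each of the three \dgLss in Theorem~\ref{T:semi-iso}, the piece of index $r$ consists of operators that shift weight by exactly $r$. I would then verify that the internal differentials preserve this grading. The key point is that $d_\pi$ preserves weight on $\pi = L(\HH)$, because $d(sh) = \tfrac{1}{2}\sum\pm[sh_{(1)},sh_{(2)}]$ with $\Delta h = \sum h_{(1)}\otimes h_{(2)}$ satisfying $\deg h = \deg h_{(1)} + \deg h_{(2)}$; from this $d_{C(\pi)}$ preserves weight on $C(\pi) = S^c(s\pi)$, and hence the commutator differentials on ${Der}\,\pi$, ${Der}\,C(\pi)$, and $s\pi\,\sharp\,{Der}\,\pi$ all preserve weight change. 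The map $\rho$ is weight-change preserving by inspection: $\rho(\theta) = C(\theta)$ has the same weight change as $\theta$, while $\rho(sx)$ raises weight by the weight of $x$. Consequently $\rho$ splits as a product $\prod_r \rho^{(r)}$ of maps of sub-complexes, and since homology commutes with products in the category of chain complexes of $\Q$-vector spaces, each $\rho^{(r)}$ is itself a quism.

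The final step is the observation that the $s\pi$ summand contributes only to $r \geq 2$: since $\HH$ is simply connected, every $x \in L(\HH)$ has weight at least $2$, and $\rho(sx)$ raises weight by exactly the weight of $x$. Hence for $r < 0$ the weight-$r$ piece of $s\pi\,\sharp\,{Der}\,\pi$ coincides with $({Der}\,\pi)^{(r)}$, and the negative-weight-change part of the semidirect product equals $W_{L(\HH)} = \prod_{r<0}({Der}\,\pi)^{(r)}$. Taking the product of the quisms $\rho^{(r)}$ over $r<0$ now delivers the claimed quism $W_{L(\HH)} \to W_{{Der}\,C(L(\HH))} \cong W_{A(L(\HH))}$. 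I expect the main care point to be making the weight-change decomposition rigorous: a single derivation of $L(\HH)$ may have nonzero components in infinitely many weight changes, so the decomposition is really a product rather than a direct sum; this subtlety, together with compatibility of the $C$--$A$ duality with weight, is where the simply-connected finite-type hypothesis on $\HH$ is genuinely used.
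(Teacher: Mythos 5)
Your proposal is correct and follows essentially the same route as the paper: both rest on Theorem \ref{T:semi-iso} applied to the free Lie algebra $L(\HH)$, together with the observation that the $sL(\HH)$ summand of the semidirect product maps to derivations of the wrong weight (since $\HH$ is simply connected, every $x\in L(\HH)$ has weight at least $2$), so that the weight-decreasing part of $sL(\HH)\,\sharp\,{Der}\,L(\HH)$ is exactly $W_{L(\HH)}$. The only (harmless) difference is in the last step: you deduce the restricted quasi-isomorphism formally by splitting $\rho$ into a product of weight-change-homogeneous pieces and using that homology commutes with products, whereas the paper instead notes that the spectral-sequence proof of Theorem \ref{T:semi-iso} itself restricts to the weight-decreasing sub-\dgLss because all differentials preserve weight.
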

\begin{proof} Under the given hypotheses on $\HH$,  the analog of Theorem \ref{T:semi-iso} implies there is
a homology isomorphism
$$
sL(\HH) \, \sharp \, {Der} \, L(\HH) \to  \, {Der} \, A(L(\HH)).
$$
The factor $sL(\HH)$ maps to derivations of $A(L(\HH))$ as follows:  For $x \in
L(\HH)$, the derivation $\theta_{sx}$ of $A(L(\HH))$ is defined as the partial
derivative with respect to $sx$.  Thus $\theta_{sx}$ decreases the bracket
length and hence increases weight.  Thus the weight decreasing elements on the
left hand side are precisely $W_{L(\HH)}$ and the proof of the analog of Theorem \ref{T:semi-iso}
restricts to the
sub--\dgLs $W$ of weight decreasing derivations, since the differentials in
$L(\HH)$ and $A(L(\HH))$ preserve weight.
\end{proof}
Thus we turn to the classification of homotopy types having a variety of models
to use.

\newpage
\vfill\eject

\section {Classifying maps of perturbations and homotopies:
The Main Homotopy Theorem.}
\label{main}

Motivated by the classification of fibrations (see \S 9) \cite{jds:bingh,tanre:fibrations,allaud}, we find we can
classify perturbations (and therefore homotopy types) by the \lq\lq path
components\rq\rq\, of a universal example.  Although we originally tried to use 
a universal dgca, we gradually came to the firm conviction that {\bf
cocomplete} dgccs are the real classifying objects; the
dual algebras work under suitable finiteness restrictions.

\begin{mainht}
\label{mainht1}  Let $\HH$ be a simply connected cga of
finite type and $(S Z,d) \to \HH$ a filtered model.  The set of augmented
homotopy types of dgca's \newline $(A,i : \HH \spprox H(A)$ is in
1--1 correspondence with the path components of $C(L)$ where $L
\subset\, {Der}\, SZ$ consists of the weight
decreasing derivations and $\hat C(L)$ is as in \ref{ }.
\end{mainht}


The rationals $\Q$ as a
coalgebra serve as a \lq\lq point\rq\rq\, and a \lq\lq path\rq\rq\,
is to be a special kind of homotopy of $\Q \to \hat C(L)$, but homotopy
of coalgebra maps is a subtle concept.  For motivation, we first
review homotopy of dgca maps.

\begin{dfn}\label{D:dgahmtpy} For dgca's $A$ and $B$ with $A$ free, two dga maps
$f,f_1 : A \to B$ are {\bf homotopic} if there is a dga map $A \to B[t,dt]$
such that $f_i$ is obtained by setting $t=i$, $dt=0$.
\end{dfn}
(For simply connected $A$, there is a completely equivalent definition 
\cite {sullivan:inf,bousfield-guggenheim}
in terms of dga maps $A^I \to  B$ where $A^I$ models the topological space of
paths $X^I$.)

{\bf Remark.}  \cite{bousfield-guggenheim}, p. 88:  Such a homotopy 
does $not$ imply there is a chain
homotopy $h : A\to  B$ such that $ h(xy) = h(x)\, f_1(y) \pm f(x)\,h(y)$ which
would be the  dga  version of \lq\lq homotopic through multiplicative maps
\rq\rq\,
but does imply that the induced maps of bar constructions

$$
{\Bbb B}f_i :{\Bbb B}A \to {\Bbb B}B
$$
are homotopic through coalgebra maps.

This is the notion of homotopy appropriate to specifying the uniqueness of
perturbations.
\begin{thm}  
\label{hs}  Compare \cite{halperin-stasheff} p. 253-4:  If  $\pi_i : (S Z,d+p_i) \to
(A,d_A)$ for $i=0,1$ are maps which induce $\rho ^*$ in bottom degree $0$ and
the $p_i$ decrease weight, then there is an isomorphism and dga map $\phi :
(S Z,d+p) \to ( S Z,d+p_1)$ such that $\phi - Id$ decreases weight
and $\pi_1\phi$ is homotopic to $\pi$.\end{thm}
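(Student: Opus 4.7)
My plan is to construct $\phi$ and a homotopy $\pi_1 \phi \sim \pi_0$ simultaneously by induction on the weight filtration of $Z$, exploiting that $(SZ, d) \to \HH$ is a minimal free resolution and that $\pi_0, \pi_1$ agree on bottom cohomology via $\rho^*$. Filter $Z = \bigcup_n Z_{\leq n}$ by weight; since $d$ preserves weight and each $p_i$ strictly decreases it, the sub-gca $SZ_{\leq n}$ is closed under both $d + p_0$ and $d + p_1$. At stage $n$ I will produce a dgca isomorphism $\phi_n : (SZ_{\leq n}, d+p_0) \to (SZ_{\leq n}, d+p_1)$ with $\phi_n - \mathrm{Id}$ weight decreasing, together with a dga homotopy $H_n : (SZ_{\leq n}, d+p_0) \to A[t,dt]$ from $\pi_0$ to $\pi_1 \phi_n$, each extending the previous stage.

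The base step treats the generators in $Z^{0,*} \cong Q\HH$: since $\pi_0$ and $\pi_1$ represent the same cohomology class on these, their difference is $d_A$-exact in $A$, so I can take $\phi_0 = \mathrm{Id}$ and use chosen primitives to define $H_0$. For the inductive step, given a generator $z$ of weight $n$, I write $\phi_n(z) = z + \psi(z)$ with $\psi(z) \in SZ_{\leq n-1}$ to be determined. By minimality $dz \in S^+Z \cdot S^+Z$ with each factor of weight $< n$, so $dz$ and $p_0 z$ both lie in $SZ_{\leq n-1}$ and $\phi_{n-1}$ already acts on them. The dga condition $(d+p_1)\phi_n(z) = \phi_n((d+p_0) z)$ reduces to
\[
(d + p_1)\psi(z) = \alpha(z), \qquad \alpha(z) := \phi_{n-1}(dz + p_0 z) - dz - p_1 z,
\]
where $\alpha(z)$ is a $(d+p_1)$-cocycle in $SZ_{\leq n-1}$. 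Applying $\pi_1$ and using the existing homotopy $H_{n-1}$ on $(d+p_0)z$ shows $\pi_1 \alpha(z)$ is $d_A$-exact; since $\pi_1$ induces an isomorphism on cohomology, $\alpha(z)$ is a $(d+p_1)$-coboundary, yielding a candidate $\psi(z)$.

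Having chosen $\psi(z)$, I extend the homotopy by solving an analogous equation in $A[t,dt]$: find $H_n(z)$ with the prescribed endpoints $\pi_0 z$ at $t=0$ and $\pi_1 \phi_n(z)$ at $t=1$, compatible as a derivation with $H_{n-1}$. The obstruction again lives in a cohomology group isomorphic to $\HH$. The hard part will be coordinating the two constructions: a chosen $\psi(z)$ may make the homotopy extension obstructed. The resolution is that the ambiguity in $\psi(z)$ (one may add any $(d+p_1)$-cocycle of weight $< n$ and correct degree) and the obstruction to extending $H_{n-1}$ are both governed by the same cohomology group $\HH$, and under $\pi_1$ they are matched, so $\psi(z)$ can be adjusted to kill the homotopy obstruction. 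Completeness of $L$ under the weight filtration then assembles the $\phi_n$ and $H_n$ into the required $\phi$ and homotopy, and $\phi$ is automatically an isomorphism because it is the identity on the associated graded.
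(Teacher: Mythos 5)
The paper offers no proof of this statement: it is quoted from Halperin--Stasheff (p.\ 253--4), so the only ``proof'' in the text is the citation. Your argument is in substance the Halperin--Stasheff obstruction-theoretic proof of uniqueness of the filtered model, with one organizational change: you induct over the weight filtration of the generators where they induct over resolution degree. Both work, because in either ordering $(d+p_i)z$ lies in the subalgebra generated by earlier generators, and your identification of the crux --- that the ambiguity in $\psi(z)$ and the obstruction to extending the homotopy are matched under the isomorphism $H(\pi_1)$ --- is exactly right.

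Two points in your sketch need to be made honest, though neither breaks it. First, since $q\le 0$ and $w=n-q$, every element of $S Z$ has weight at least its total degree; hence for a generator $z$ of resolution degree $0$ (and these occur at \emph{every} weight, not just at the bottom of the filtration) the correction $\psi(z)$ is forced to be $0$, and the homotopy obstruction for such $z$ cannot be killed by adjusting $\psi$ --- it is killed only by the hypothesis that $\pi_0$ and $\pi_1$ both induce $\rho^*$, i.e.\ that $\pi_0(z)$ and $\pi_1(z)$ are cohomologous cocycles (note $(d+p_i)z=0$ there). So your ``base step'' must be read as covering all of $Z^{0,*}$ across all weights, with the weight induction reserved for generators of negative resolution degree, where $\psi(z)$ may acquire a resolution-degree-$0$ component --- which is precisely where cocycles representing arbitrary classes of $\HH^{|z|}$ live, so the adjustment you invoke is actually available. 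Second, when you solve $(d+p_1)\psi(z)=\alpha(z)$ you need the primitive to have weight $<n$; this holds because the weight-$<n$ part of $S Z$ is a $(d+p_1)$-subcomplex whose cohomology (by the weight spectral sequence, with $E_2=\HH$ concentrated in resolution degree $0$) injects into $H(S Z,d+p_1)$, so a cocycle there that bounds globally bounds there. With these two pieces of bookkeeping supplied, your proof is correct.
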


\subsection{ Homotopy of coalgebra maps}
\label{homotopycoalg}
The algebra $I = S [t,dt]$ with $t$  of degree $0$  and $ dt$  of degree
$1$ is implicit in the above definition (4.2) of homotopy of dg algebra maps.
We regard $I$ as the dual of the coalgebra
$I^\dual $ with (additive) basis $\{t_i,t_i u\vert i = 0,1,\dots\}$, 
diagonal $\Delta t_n =
\underset {i+j=n}{\Sigma} t_i \otimes t_j$ and coderivation differential 
$d(t_nu) = (n+1)t_{n+1}$.  We denote $t$ by $t_1$ when convenient.


There are difficulties with defining homotopies of coalgebras as maps $C \otimes
I_* \to D$ because the end of the homotopy corresponds to the image of $\Sigma\  t_i$.

\begin{dfn}\label{D:J} The completion of $I_*$ with respect to the obvious
filtration we will denote by $J$ and refer to that completion 
as the {\bf unit interval
coalgebra}.  (We could call $t_i$ the $i$--th copower of $t = t_1$, and $J$ the
coalgebra of formal copower series.)  The symbol $\Sigma\ t_i$ does represent an
element of $J$.
\end{dfn}
\begin{dfn}\label{D:filthmtpy} 
Given two cocomplete dgcc's $C$ and $D$, two filtration
preserving maps $f_i : C \to  D$ are {\bf homotopic} if there is a filtration
preserving dgc map $h : C \hat \otimes J \to D$ such that $f(c) = h(c \otimes
1)$ and $f_1(c) = \Sigma\, h (c \otimes t_i)$.
\end{dfn}

In particular, the two \lq\lq endpoints\rq\rq\, $\Q \to J$ given by $1 \to 1$ 
and $1 \to  \Sigma\ t_i$ are homotopic.

A case of particular importance is that in which $D$ is $\hat C(L)$, the
cocompletion of $C(L),$ which can be regarded a a sub-coalgebra of $\hat T^c sL$ .

\begin{dfn}\label{D:classmap}
 Let $(L,d_L)$ be a \dgL and $p \in L_1$ such that $d_Lp +
\frac{1}{2}[p,p] = 0$.  The {\bf classifying map}
$$\chi (p): \Q \to  C(L)$$
\noindent is the dgc map determined by
$$1 \to sp \in sL,$$
$$i.e.,
\chi (p)(1) = 1 + \Sigma\ sp^{ \otimes n}  = 1 + sp + sp \otimes sp +
\cdots.$$
\end{dfn}
As $\hat C(L)$ is a sub-coalgebra of $\hat T^c sL,$ so  $\chi(p)$ is essentially exp($p$).
That $\chi(p)$ is a differential map is true precisely because $d_Lp + \frac{1}{2}[p,p] =
0$, which is the same as $d_Lp+p^2=0$.

We are concerned with the special case of dg homotopy between two 
classifying maps
\newline $\chi(p_i) : \Q \to \hat C(L)$.  

A {\bf homotopy} or {\bf
path} $\lambda : J \to \hat C(L)$ is determined by a linear
filtered homomorphism $\bar \lambda : J \to L$.  Denote the image of $t_n$
by $y_n$ and that of $t_nu$ by $z_n$.  By the correspondence
${Hom}_{filt}(J,L) \to L \hat \otimes J^*$, we can represent $\bar
\lambda$ by $\Sigma y_i t^i + dt \Sigma z_i t^i$.  That $\lambda$
is a dgc map is expressed by $d_L \bar \lambda + \bar \lambda d +
\frac{1}{2}[\bar \lambda,\bar \lambda] = 0$ which translates to
\begin{gather} 
d_L y_n + \frac{1}{2} \, \underset{i+j=n}{\Sigma} [y_i,y_j] = 0 \\
(n+1)y_{n+1} + d_L z_n + \underset{i+j=n}{\Sigma} [y_i,z_j] = 0.  
\end{gather}
Letting $\eta(t) = \Sigma y_n t^n$ and $\zeta(t) = \Sigma z_n t^n$ in $L[[t]]$, 
 we have the differential equations 
\begin{align} 
d_L\eta + \frac{1}{2}[\eta,\eta] = 0\\ 
\frac{d\eta}{dt} + d_L\zeta + [\eta,\zeta] = 0.
\end{align}
The first equation says that $\eta(t)$ is a perturbation, while the
second gives an action of $L^0[[t]]$ on the set of perturbations.
To check that if $\eta(0)$ satisfies the MC equation, so does $\eta(t)$ for all $t$,
proceed as follows:
Let $ u(t) = d_L\eta + \frac{1}{2}[\eta,\eta] .$
Then $$\frac{du}{dt} = dL\frac{d\eta}{dt} + [\frac{d\eta}{dt} , \eta]  = $$
$$d_L(-d_L\zeta - [\eta,\zeta]) -[d_L\zeta + [\eta,\zeta],\eta]=$$
$$-d_L [\eta,\zeta] -[d_L\zeta,\eta] - [[\eta,\zeta],\eta]=$$
$$-[\zeta,d_L\eta]- [[\eta,\zeta],\eta],$$
using $[\eta,\zeta] = -[\zeta,\eta].$
By the Jacobi relation.
$$ [\eta,\zeta],\eta] = -[[\eta,\eta],\zeta] + [\eta,[\zeta,\eta]],$$
so $ \frac{1}{2}[[\eta,\eta] ,\zeta] =  [\eta,\zeta],\eta].$
Thus
$$-[\zeta,d_L\eta]- [[\eta,\zeta],\eta] =- [\zeta, d_L\eta + \frac{1}{2}[\eta,\eta]].$$
In other words, $$\frac{du}{dt} = -[\zeta,u].$$ In fact, this gives
$$u(t) = u(0) exp (-[\zeta, \ ] t).$$
If $\eta(0)$ satisfies the MC equation, i.e.$ u(0) = d_L\eta(0) + \frac{1}{2}[\eta(0),\eta(0)] = 0,$
then $u(t)=0$ for all t (by uniqueness of solutions of ODE).

Later in \ref{Loo version} we will replace $L$ by its homology with respect to $d_L$ together with the
$L_\infty$-algebra structure transferred from the strict \dgL structure of $L$. The MC equation is correspondingly generalized to
$$dp + \sum \frac{1}{n!} [p,...,p]=0.$$ The second differential equation generalizes in a perhaps
less obvious way:
$$\frac{d\eta}{dt} + d_L\zeta + \sum \frac{1}{n!}  [\eta,..., \eta,\zeta] = 0,$$
where there are $n$ factors of $\eta.$
These differential equations correspond to equations in (formal) power series. (Although the word \emph{formal}
is of necessity used in two different ways in this paper: in the sense of homotopy type
and in the sense of power series;  hopefully the context will make it clear which is intended.)

We will use the second differential equation to prove the main result from a
homotopy point of view, but first we need to worry about the transition from the
formal theory using formal power series to the subtler results involving
convergence.  We are concerned with $L \subset \, {Der} \, L(\HH)$ or
${Der}\, A(L(\HH))$ consisting of the weight decreasing derivations. $L$ is
complete with respect to the weight filtration, so $C(L)$ is with respect to the
induced filtration and hence with respect to the $\otimes$--filtration.

\emph{IS THAT RIGHT? SO WE DON'T NEED TO COCOMPLETE??}

\begin{dfn}\label{D:filtmap}
  For a complete  \dgL $L$ with filtration $F_p$ and a
filtered dgcc $C$, a dgcc map $f : C \to C(L)$ is {\bf filtered} if $\pi f : C
\to  L$ is filtration preserving.  In particular, a homotopy $\lambda : J \to
C(L)$ being {\bf filtered} implies  $\Sigma \pi \lambda (t_nu)$ and $\Sigma
\pi\lambda (t_nu)$ are well--defined elements of $L^{1}$ and $L^0$
respectively.
\end{dfn}
\begin{dfn}\label{D:path} 
A {\bf path component} of $C(L)$ for a complete \dgL $L$ is a
filtered homotopy class of points: $\Q \to C(L)$.
\end{dfn}
{\bf Remark}.  The Main Homotopy Theorem could be rephrased entirely in terms of
\lq\lq classifying twisting cochains\rq\rq \, $\pi \chi (p): \Q
\to L$ and filtered homotopies thereof.  


Motivation from
topology, especially the generalization to the classification of
fibrations \S 9, are better served by staying in the category
of dgcc's.  

In light of Theorem \ref{hs},  we have that the Main Homotopy Theorem is equivalent to:

\begin{mainhtp}
Two perturbations $p$ and $q$ represent the same augmented homotopy
type if and only if the classifying maps $\chi (p),\chi (q) : \Q
\to C(L)$ are homotopic as filtered maps of
coalgebras.
\end{mainhtp} 
Here $L$ is the weight decreasing
subalgebra of ${Der}\, L(\HH)$.  After proving the theorem, we
investigate in the following section 
the possibility of replacing $L$ by an equivalent dgL; some changes will be conceptually significant, others of
computational importance.

\subsection { Proof of the Main Homotopy Theorem} \label{S:modsp}

\label{proof main}

In this section, we use the basic deformation differential equations to provide
the proof of the hard part of Theorem 4.1: homotopy implies equivalence.  For
the easy part, observe \cite{halperin-stasheff} that $(S Z, d+p)$ 
and $(S Z,d+q)$
have the same augmented homotopy type provided there is an automorphism $\Q$ of
$S Z$ of the form: $Id$ plus ``terms which decrease weight''.  The equivalence
can be expressed directly in terms of $p$ and $q$ as elements of the Lie algebra
${Der}\,S Z$.

For any \dgL $(L=\oplus L_i,d)$, consider the adjoint action of $L$ on $L$,
i.e., $ad (x)(y) := [x,y]$.  We can define an action of the
universal enveloping algebra $UL$ on L making $L$ a $UL$
module:  for $u = x_1\cdots x_n \in UL$ with $x_i\in L$,
define $uy = [x_1, [x_2, [\cdots , [x_n, y]\cdots ]]$ for $y\in
L.$  Provided $L$ is complete with respect to the filtration $L
\supset [L,L] \supset [L,[L,L]] \supset \dots ,$  there is a
sensible meaning to $(exp\, x)y= \Sigma \frac{ x^n}{n!} y$ for
$x\in L, y\in L.$  In fact, $exp\, x$ acts as an automorphism
of $L$.  In particular, for the Lie algebra $L \subset \,
{Der} \, S Z$ of weight decreasing derivations, $L$ is
complete with respect to the weight filtration and hence with
respect to the action by $UL$.

For this $L$ of weight decreasing derivations, we can similarly
define $exp\, x$ as an automorphism of the algebra $S Z$ for
$x \in L.$ Given a dga map and automorphism $\phi: (S Z,
d+p) \to (S Z, d+q)$ of the form $Id + \text{``terms which
decrease weight''}$, let $b = log (\phi -Id) \in L$ so that 
$exp\, b = \Sigma \frac{b^n} {n!} = \phi.$  The equation 
$(d+q)\circ\phi = \phi \circ (d+p)$ can then be written 
$$ d+q = (exp b)\circ (d+p) \circ (exp b)^{-1} $$ 
which is the same as $$ d+q = (exp\, b)(d+p) $$ using the action of 
$UL$ on $L$.

If we set $\zeta (t) = b$ and $\eta (t) = (exp\, tb)(d+p) - d,$ the
differential equations (5) and (6) are satisfied and $\eta (1) = q,$ so
$\chi (p)$ and $\chi (q)$ are filtered homotopic.

Given a homotopy, it is much harder to find $\phi$ because of the
non-additivity of $exp$, but use of the differential equation viewpoint
will allow us to succeed.

A homotopy $\lambda: J \to \mathcal {C}(L)$ gives a solution $(\eta (t),
\zeta (t))$ of the differential equation.  We will use that solution
to solve the equivalence, i.e., find $\theta$ so that $d+q = 
(exp\, \theta)(d+p)$ where $q = \eta (1).$  We change point of view slightly
and look for $\eta$ given $\zeta.$  It is helpful first to write
$\mu (t) = d+\eta (t),$ so that equation (6) becomes
$$
d\mu(t)/dt+ [\mu (t), \zeta (t)] = 0.
$$
For the remainder of this section, $\mu (t)$ will have this meaning
with $\mu (0) = d+p$. We will solve this equation formally, i.e.,
by power series, and then remark where appropriate on
convergence. 
\begin{lm} If $\zeta (t) = z \in L$,
then $\mu (t) = (exp\, t\zeta ) \mu (0)$ is a solution of $d\mu(t)/dt+ [\mu (t),\zeta ] = 0$.  \end{lm}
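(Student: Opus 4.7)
The plan is to verify the claim by differentiating the stated power series expression directly. Interpret $\mu(t) = (\exp\, tz)\mu(0)$ either as the conjugation $\exp(tz)\circ\mu(0)\circ\exp(-tz)$ of operators on $SZ$, or equivalently as the $UL$-action extended to the affine space $d+L^1$ by the rule $[x,d+y]:=-d_Lx+[x,y]$. In either reading,
\begin{equation*}
\mu(t)\;=\;\sum_{n\geq 0}\frac{t^n}{n!}\,(ad\, z)^n\mu(0)\;\in\; d+L^1[[t]].
\end{equation*}

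First I would check that this series is well defined: since $z$ is a weight-decreasing derivation and $L$ is complete with respect to the weight filtration, on any element of $SZ$ of fixed weight only finitely many terms contribute, so the sum genuinely converges termwise in the completed setting. The initial condition $\mu(0)=d+p$ is respected by the $n=0$ term.

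Next, term-by-term differentiation yields
\begin{equation*}
\frac{d\mu}{dt}\;=\;\sum_{n\geq 1}\frac{t^{n-1}}{(n-1)!}(ad\, z)^n\mu(0)\;=\;(ad\, z)\,\mu(t)\;=\;[z,\mu(t)].
\end{equation*}
Because $|z|=0$, graded antisymmetry collapses to ordinary antisymmetry, so $[z,\mu(t)]=-[\mu(t),z]$. With $\zeta(t)=z$ this gives
\begin{equation*}
\frac{d\mu}{dt}+[\mu(t),\zeta(t)]\;=\;[z,\mu(t)]+[\mu(t),z]\;=\;0,
\end{equation*}
which is the desired identity.

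The only point I expect to need to justify carefully—and hence the main (mild) obstacle—is the interpretation of $ad\, z$ on the non-$L$ summand $d$ of $\mu(0)=d+p$. This is not a genuine obstruction: viewing $d$ as a degree-$1$ derivation of $SZ$, the graded commutator of operators gives $[z,d]=zd-dz=-d_Lz\in L^1$ (using $|z|=0$), and this extension of $ad\, z$ to $d+L$ is consistent with the graded Jacobi identity and with $d^2=0$. With this convention fixed, the verification above is purely formal and no further analytic or algebraic difficulty arises.
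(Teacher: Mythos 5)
Your proposal is correct and is essentially the paper's own argument: the paper likewise differentiates the series $\sum \frac{t^n}{n!}(ad\,z)^n\mu(0)$ term by term and matches it against $[z,\mu(t)]=-[\mu(t),z]$. Your added remarks on convergence via the weight filtration and on interpreting $[z,d]=-d_Lz$ (which the paper handles through the incorporated algebra $L[d]$) are consistent with the text and fill in details the paper leaves implicit.
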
 
\begin{proof} $d\mu(t)/dt= \Sigma \frac {t^{n-1}}{(n-1)!}  (ad^n\, z)
\mu (0)$ \text{while} $[z,\mu (t)] = \Sigma
\frac{t^n}{n!}[z,(ad^n z)\mu (0)]$.  (Notice both begin
with the term $[z,\mu (0)].$)  \end{proof}


\noindent Similarly for $\zeta
= t^k \phi$ with $\phi \in L$, we have that $\mu (t) = (exp\,
\frac{t^{k+1}}{k+1} \, \phi)\mu (0)$ is a solution.  
We would
like to handle a general homotopy, i.e., a general $\zeta$, in a
similar manner.  We have to contend with the \lq\lq
additivity\rq\rq of homotopy and the non--additivity (via
Campbell--Hausdorf--Baker) of $exp$.  For this purpose, we write
$\mu(t) = u(t)\mu(0)$.  In the case just studied,
$u(t) = exp\, \frac{t^k}{k} \phi$ acts as an automorphism of
$L$ and  is associated to $\zeta \in L[[t]]$ such
that for any $\theta = d + \psi$ with $\psi \in L_1$, we have
$$\dot u \theta + [u\theta,\zeta ] = 0.$$ We wish to preserve these
attributes as we construct $\mu(t)$ for a more general $\zeta (t)$.

Consider the \lq\lq additivity\rq\rq of homotopy.  If we have pairs $(\mu
_1,\zeta _1)$ and  $(\mu _2,\zeta _2)$ which are solutions of the differential
equation
$$
\dot \mu + [\mu,\zeta] = 0        
$$
with $\mu _1(0) = d+p$ and $\mu _2(0) = \mu _1(1) = d+q$, we wish to find a
solution $(\mu _3,\zeta _3)$ such that $\mu _3 (0) = d+p$ 
and $\mu _3 (1) =  \mu _2 (1)$.  It is sufficient for our purposes to consider
\begin{align}
\mu _1 (t) &= u_1 (t) \mu _1 (0),\\
\mu _2 (t) &= u_2 (t) \mu _2 (0) \end{align}
where each $u_i(t)$ acts as an automorphism of $L[[t]]$ and,
for any $\theta = d + \psi$ with $\psi \in L_1[[t]]$, we have
$$
\dot u_i \theta + [u\theta,\zeta_i ] = 0.$$

\begin{lm}  The pair ($\mu _3,\zeta _2 + u_2\zeta _1)$
is a solution for $$ \mu _3 (t) = u_2 (t) u_1 (t) \mu _1 (0). $$
\end{lm}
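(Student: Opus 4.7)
The plan is a direct verification: differentiate $\mu_3(t) = u_2(t)u_1(t)\mu_1(0)$ with respect to $t$, then use the two hypotheses on $u_1,u_2$ and the fact that each $u_i(t)$ acts as a Lie algebra automorphism to rewrite $\dot\mu_3$ as a bracket against $\zeta_2 + u_2\zeta_1$.

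First I would check the boundary values. Since each $u_i(0)$ is the identity (this is built into $\mu_i(t)=u_i(t)\mu_i(0)$ at $t=0$), we have $\mu_3(0)=\mu_1(0)=d+p$. At $t=1$, $\mu_3(1)=u_2(1)u_1(1)\mu_1(0)=u_2(1)\mu_1(1)=u_2(1)\mu_2(0)=\mu_2(1)$, so the endpoint is correct. Also $\zeta_3 := \zeta_2+u_2\zeta_1$ lies in $L^0[[t]]$ since $u_2\in\mathrm{Aut}(L[[t]])$, so the proposed pair is of the right type.

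Next I would compute $\dot\mu_3$ via the Leibniz rule: $\dot\mu_3 = \dot u_2\,u_1\mu_1(0) + u_2\,\dot u_1\mu_1(0)$. Applying the hypothesis on $u_2$ to $\theta = u_1\mu_1(0)$ (which is of the form $d+\psi$ with $\psi\in L^1[[t]]$) gives $\dot u_2(u_1\mu_1(0)) = -[u_2 u_1\mu_1(0),\zeta_2] = -[\mu_3,\zeta_2]$. Applying the hypothesis on $u_1$ with $\theta = \mu_1(0)$ gives $\dot u_1\mu_1(0) = -[u_1\mu_1(0),\zeta_1]$, hence
\[
u_2\,\dot u_1\mu_1(0) = -u_2[u_1\mu_1(0),\zeta_1] = -[u_2 u_1\mu_1(0),\,u_2\zeta_1] = -[\mu_3,\,u_2\zeta_1],
\]
where the middle equality uses that $u_2(t)$ is a Lie algebra automorphism.

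Combining, $\dot\mu_3 + [\mu_3,\zeta_2+u_2\zeta_1] = 0$, which is exactly the differential equation with $\zeta_3=\zeta_2+u_2\zeta_1$. The only subtle point, and the one I would be most careful about, is justifying that $u_2(t)$ acts as a Lie algebra automorphism of $L[[t]]$ (not merely a linear map) so that it may be pulled across the bracket; this is part of the hypothesis built into the form $u_i = \exp(\,\cdot\,)$ inherited from the previous lemma, and it is what makes the additivity of homotopies translate into the twisted sum $\zeta_2+u_2\zeta_1$ rather than a naive sum $\zeta_1+\zeta_2$.
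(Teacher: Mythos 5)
Your computation is exactly the paper's proof: differentiate $\mu_3$ by the Leibniz rule, apply the defining property of $u_2$ to $\theta = u_1\mu_1(0)$ and of $u_1$ to $\theta=\mu_1(0)$, then use that $u_2$ is a Lie algebra automorphism to convert $u_2[u_1\mu_1(0),\zeta_1]$ into $[u_2u_1\mu_1(0),u_2\zeta_1]$. The added remarks on boundary values and on why $u_2$ may be pulled across the bracket are correct elaborations of points the paper leaves implicit.
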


\begin{proof} We compute
\begin{align*}
\dot \mu _3 &= \dot u_2 u_1\mu _1(0) + u_2\dot u_1\mu _1 (0)\\
&= -[u_2 u_1 \mu _1 (0),\zeta _2] - u_2 [u_1 \mu (0),\zeta _1]\\
&= -[u_2 u_1 \mu _1 (0),\zeta _2] - [u_2u_1 \mu (0),u_2\zeta _1]
\end{align*}  
as desired. \end{proof}

Thus transitivity of homotopy corresponds to a sort of crossed
additivity of the $\zeta$'s.


Since we have a solution for each $\zeta_k = t^k\phi,$ 
the lemma can be applied
inductively to show:
\begin{crl} For any $\zeta (t) = \Sigma z_kt^k$
and given $d+p$, there is a (unique) formal solution $\eta (t)$
with $d+\eta (t)$ of the form
$$
\cdots exp\, t^n\theta_n \cdots exp\, t\theta_1 (d+p), \ \
\theta_i\in L_1.
$$
Of course, the $\theta_i$ are in general {\it not} the $z_i$.
\end{crl}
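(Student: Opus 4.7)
The plan is to build the product $\cdots\exp(t^n\theta_n)\cdots\exp(t\theta_1)(d+p)$ factor by factor, feeding it into the two preceding lemmas. By the first (single-term) lemma, the factor $\exp(t^n\theta_n)$ is precisely the automorphism associated to the ``pure'' input $\zeta = n\,t^{n-1}\theta_n$. By the second (crossed additivity) lemma, composing $\exp(t^n\theta_n)$ with a previously constructed $u^{(n-1)}$ yields a solution whose associated input is
\[
\zeta^{(n)}(t) \;=\; n\,t^{n-1}\theta_n \;+\; \exp(t^n\theta_n)\bigl(\zeta^{(n-1)}(t)\bigr).
\]
The idea is to exploit the triangular structure of this recursion to choose each $\theta_n$ so that $\zeta^{(n)}$ matches the prescribed $\zeta(t)=\sum z_k t^k$ to ever higher order in $t$.

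Concretely: set $\theta_1 := z_0$, so that $u^{(1)}(t):=\exp(t\theta_1)$ has associated $\zeta^{(1)}(t)=z_0$, agreeing with $\zeta(t)$ modulo $t$. Assume inductively that $u^{(n-1)}$ has been built with $\zeta^{(n-1)}\equiv\zeta\pmod{t^{n-1}}$, and let $\tilde z_{n-1}\in L$ be the coefficient of $t^{n-1}$ in $\zeta^{(n-1)}$. The key observation is that $\exp(t^n\theta_n)\bigl(\zeta^{(n-1)}\bigr)\equiv\zeta^{(n-1)}\pmod{t^n}$, since every bracket $[\theta_n,-]$ in the exponential series carries an extra factor $t^n$. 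Hence $\zeta^{(n)}\equiv n\,t^{n-1}\theta_n+\zeta^{(n-1)}\pmod{t^n}$, and forcing the coefficient of $t^{n-1}$ to equal $z_{n-1}$ gives
\[
\theta_n \;:=\; \tfrac{1}{n}\bigl(z_{n-1} - \tilde z_{n-1}\bigr),
\]
which is well-defined in characteristic zero and depends only on the previously chosen $\theta_1,\dots,\theta_{n-1}$. Existence and uniqueness of $\theta_n$ are both immediate, and by construction $\zeta^{(n)}\equiv\zeta\pmod{t^n}$.

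It remains to interpret the infinite product. Because $L$ consists of weight-decreasing derivations and is complete with respect to the weight filtration, each factor $\exp(t^n\theta_n)$ acts on any fixed-weight element of $SZ$ through only finitely many terms of its exponential expansion. Thus the product stabilises on each weight component and defines an automorphism of $SZ[[t]]$; applying it to $d+p$ produces $\mu(t)=d+\eta(t)$ with $\eta(t)\in L[[t]]$, and by the stepwise construction $\eta$ is a formal solution of $\dot\mu+[\mu,\zeta]=0$ with $\eta(0)=p$. Uniqueness of $\eta(t)$ as a formal power-series solution of this ODE with prescribed initial value is automatic. The main obstacle, conceptually, is bookkeeping: one must verify that the \emph{triangular} form of the product is exactly what decouples the recursion --- that is, that no later factor $\exp(t^m\theta_m)$ with $m>n$ can disturb the coefficient of $t^{n-1}$ in the associated $\zeta$. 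This is precisely what the congruence modulo $t^n$ above guarantees, and it is the only point in the argument where the specific shape of the ansatz is essential.
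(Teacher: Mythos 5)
Your proof is correct and is essentially the argument the paper intends: the paper simply asserts that the two preceding lemmas "can be applied inductively," and your write-up supplies exactly that induction — each outer factor $\exp(t^n\theta_n)$ contributes $n\,t^{n-1}\theta_n$ to the associated $\zeta$, the conjugation by $\exp(t^n\theta_n)$ cannot disturb coefficients below order $t^n$, and so the $\theta_n$ are determined triangularly and uniquely. Your remarks on the formal convergence of the infinite product via the weight filtration likewise match the paper's subsequent discussion.
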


Now we address the question of convergence of $\mu (t).$  When
$L$ is complete and $J \to \mathcal {C} (L)$ is filtered, we have
immediately that $\Sigma\ z_n$ is convergent. The deviation from
additivity shows $n\theta_n$ differs from $z_n$ by terms of
still more negative weight; the finite product therefore
converges also for $t \leq 1.$

Similarly, if $\phi_n = exp\, \phi_n,$ then the sequence of
automorphisms $\phi_n\cdots\phi_1$ converges to show that $d+q =
d+ \eta (1)$ is equivalent to $d+p,$ which completes Theorems \ref{mainht}, \ref{mainht1}.
\newpage
\vfill\eject

\def\congo{\equiv}
\section{Homotopy invariance of the space of homotopy types}\label{invariance}

One advantage  of the  homotopy  theoretic  point  of view is that  it suggests the  homotopy invariance 
of the space $M_L$   of augmented  homotopy types with respect to changes in the dg Lie algebras used.
We began with the filtered model $(SZ, d)$ but  could just  as well have used the filtered model $A(L(\HH)).$

Using the  models $(SZ, d)$  and  $A(L(\HH))$, we can  consider  perturbations of $(SZ, d)$  as
before, of $L(\HH)$  with respect  to bracket  length  or of $A(L(\HH))$ with respect  to the grading
induced from bracket length.  Having perturbed $ L(\HH) $ to $\bar L(\HH)$, it follows that  $A(\bar L(\HH))$
 is a perturbation of $A(L(H)). $  Since $(SZ, d) $ is minimal,  by Theorem  \ref{3.5} there  is an induced
map of $Der\ SZ $ into  $Der\ A(L(\HH)).$  In fact,  regarding $ (SZ, d) \to A(L(\HH))$ as a model, it
is not hard  to see this map preserves both  gradations,  so a perturbation of $SZ $ maps to a
perturbation of $A(L(\HH)).$

For dg Lie algebras, the \emph{weight} of a derivation  is the decrease in total  degree plus the
increase in the bracket  length. For  dgaÕs, the  \emph{weight } of a derivation is, as before, the increase in total degree plus the decrease in resolution degree. 

For any weighted dga $ A$ or dg Lie algebra $L$, let $W_A  \subset Der\ A$ (respectively $W_L  \subset Der\ L$) denote the subÐdg Lie algebra of weight decreasing (respectively increasing) derivations.   Let  $p \in W_{(SZ,d)} $  and $q \in W_{L(\HH)}$ have the same image in $W_{A(L(\HH))}$, then  the respective classifying maps give a commutative diagram:


$$\hskip10exC (W_{ (SZ,d)})$$
$$\nearrow\hskip5ex\downarrow	$$
$$\hskip5exQ \longrightarrow C (W_{ A(L(\HH) })$$
$$\searrow\hskip5ex\uparrow$$
$$\hskip10exC (W _{L(\HH)}).$$

At the  end of \ref{dgl}, 
we saw that  the  maps of $WÕ$s induced  homology isomorphisms.   Thus  the classifications  are  equivalent  at the  homological level.   In fact,  we get the  same space of augmented  homotopy types independent of the model used, as we now show in detail.

\begin{definition} \label{incorporated}  For  any  dg Lie algebra $(L, d_L)$,  we define the   {\bf incorporated}  dg Lie algebra
 $(L[d], ad\ d )$ by  adjoining  a single new generator  also called $d$ of degree one with the obvious relations:  $[d,d] = 0, 
  [d, \theta]= ad\ d_L(\theta)$.
  \end{definition}


For any dg Lie algebra $L$, we define the variety $V_L  \subset L[d]$ to be

$$
\{p \in L^{1} |(d + p)^2=0\}.
$$

The variety is in fact in $L$ where the defining equation  might more appropriately  be written
 $$
dp + 1/2[p, p] =0,\label{integrability}
$$
called, at various times, the deformation equation, the integrability equation, the Master equation and now most commonly the Maurer-Cartan equation.

If $L$ is complete with respect to the $L^0$Ðfiltration, the action

$$p \mapsto (exp\  b)(d+ p) - d$$

\noindent
makes sense in $L[d]$ for $p \in L^{1},  b \in L^0.$
We define the quotient  {bf space} $ M_L$   to be $V_L /exp L^0.$

For complete dg Lie algebras, we have the notion of filtered homotopy of classifying maps
and the Main Homotopy Theorem  holds in that generality.

\begin{theorem}
If $L$ is an $L^0$-complete dg Lie algebra , then $M_L$   is in one-to-one correspondence with the set of filtered homotopy classes of maps $Q \to \hat C (L).$
\end{theorem}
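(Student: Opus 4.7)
The plan is to reprise the argument of Section 5.1 in the abstract setting of an $L^0$-complete dg Lie algebra, abstracting away the specific weight filtration used for $L \subset {Der}\, SZ$. I break the proof into well-definedness, surjectivity, and injectivity of the map $[p]\mapsto[\chi(p)]$.

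First I would verify the map is well defined. The classifying map $\chi(p): \Q\to \hat C(L)$ sending $1\mapsto \sum_n (sp)^{\otimes n}$ is a legitimate element of the completion by the $L^0$-completeness assumption and is a filtered dgc map precisely when $dp+\tfrac12[p,p]=0$, i.e. when $p\in V_L$. If $q=(\exp b)(d+p)-d$ for some $b\in L^0$, then the pair $\zeta(t)=b,\; \eta(t)=(\exp tb)(d+p)-d$ satisfies both Maurer--Cartan differential equations from \ref{main}, giving a filtered path in $\hat C(L)$ from $\chi(p)$ to $\chi(q)$. For surjectivity, the universal property of $\hat C(L)$ (as the completion of the cofree cocommutative coalgebra on $sL$) implies that any filtered dgc map $\chi\colon \Q\to \hat C(L)$ is of the form $\chi(p)$ for a unique $p\in L^1$, and the condition that $\chi$ commute with $d$ forces $p\in V_L$.

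The substantive part is injectivity. Given a filtered homotopy $\lambda: \Q\hat\otimes J\to \hat C(L)$ between $\chi(p)$ and $\chi(q)$, the projection $\pi\lambda$ yields a pair $(\eta(t),\zeta(t))\in L^1[[t]]\oplus L^0[[t]]$ solving the two differential equations with $\eta(0)=p$ and $\eta(1)=q$. The task is to produce $\theta\in L^0$ with $d+q=(\exp\theta)(d+p)$. I would follow the iterative procedure of the corollary in Section 5: writing $\zeta(t)=\sum z_k t^k$, the ``crossed additivity'' lemma (which shows that composing homotopies corresponds to $(\zeta_1,\zeta_2)\mapsto \zeta_2+u_2\zeta_1$) lets one construct inductively elements $\theta_n\in L^1$ such that
\[
d+\eta(t) \;=\; \cdots\exp(t^n\theta_n)\cdots\exp(t\theta_1)(d+p).
\]
Setting $t=1$ then formally gives the desired automorphism $\exp\theta=\prod_n \exp\theta_n$ carrying $d+p$ to $d+q$.

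The main obstacle is convergence: the formal solution above is immediate by induction, but for a genuine gauge equivalence one must ensure that $\prod_n\exp\theta_n$ converges in the $L^0$-complete topology. The key observation (as in Section 5) is that by the non-additivity of $\exp$ via Campbell--Hausdorff, the difference $n\theta_n-z_n$ lies in strictly deeper terms of the filtration than $z_n$ itself; since $\lambda$ is filtered the partial sums $\sum z_n$ converge, and hence so do $\sum \theta_n$ and the corresponding infinite product of exponentials. This furnishes $\theta=\log(\prod\exp\theta_n)\in L^0$ with $(\exp\theta)(d+p)=d+q$, completing injectivity and hence the theorem. The role of $L^0$-completeness is exactly to guarantee this convergence; without it one obtains only a formal power-series gauge equivalence.
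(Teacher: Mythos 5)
Your proposal is correct and follows essentially the same route as the paper: the easy direction via the explicit path $(\eta(t),\zeta(t))=((\exp tb)(d+p)-d,\,b)$, and the hard direction by extracting the pair solving the two differential equations from a filtered homotopy, applying the crossed-additivity lemma to build the infinite product of exponentials, and invoking $L^0$-completeness (in place of the weight filtration) for convergence. The only quibble is the degree slip ``$\theta_n\in L^1$'' for the gauge parameters, which should read $L^0$ as your final paragraph correctly has it.
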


As for the homology  invariance  of $M_L$ , we have:

\begin{theorem}
 Suppose $f : K  \to  L$ is a map of decreasingly filtered dg Lie algebras which are complete and bounded above in each degree.  If $f$ induces

a monomorphism   $H ^2(K ) \to  H ^2(L)$

an isomorphism   $H ^1(K ) \to H^1(L)$	and

an epimorphism 	$H^ 0(K ) \to H^ 0(L),$

\noindent
then $f$induces a one-to-one correspondence  between $M_K$    and $M_L.$
\end{theorem}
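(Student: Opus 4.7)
The plan is to prove this by inductive obstruction theory along the filtration, exploiting completeness for convergence. The induced map $f_\ast : M_K \to M_L$ sends $[q] \mapsto [f(q)]$, and it is well-defined because $f$ preserves both the Maurer--Cartan equation and the gauge action: if $q \in V_K$ and $q \sim q'$ via $a \in K^0$, then $f(q) \in V_L$ and $f(q) \sim f(q')$ via $f(a) \in L^0$. It remains to show $f_\ast$ is a bijection.

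For surjectivity, fix $p \in V_L$ (we may assume $p \in F^1 L^1$, where $F^\bullet$ denotes the filtration, compatible with the bracket in the sense that $[F^r, F^s] \subseteq F^{r+s}$). I would construct, inductively in $n$, elements $q^{(n)} \in F^1 K^1$ and $b^{(n)} \in F^1 L^0$ such that $q^{(n)}$ satisfies the MC equation modulo $F^{n+1}$ and $\exp(b^{(n)})\cdot(d+f(q^{(n)})) \equiv d+p \pmod{F^{n+1}}$, with $q^{(n+1)} \equiv q^{(n)}$ and $b^{(n+1)} \equiv b^{(n)}$ modulo $F^{n+1}$. By completeness and boundedness in each total degree, the limits $q := \lim q^{(n)} \in K^1$ and $b := \lim b^{(n)} \in L^0$ converge to honest elements with $q \in V_K$ and $\exp(b)\cdot(d+f(q)) = d+p$, so $f_\ast[q] = [p]$. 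At the inductive step, the discrepancy at filtration level $n+1$ in $K$ (the ``obstruction'') is a cocycle representing a class in $H^2(K)$; because the identical MC computation in $L$ is satisfied exactly by $p$, the image of this class in $H^2(L)$ is zero, and then the monomorphism $H^2(K)\hookrightarrow H^2(L)$ forces it to vanish in $H^2(K)$, so an extension $q^{(n+1)}$ exists. The freedom in choosing the extension is an $H^1(K)$ class; we use $H^1(K) \cong H^1(L)$ to pick the unique extension whose image in $L$ matches $p$ modulo $F^{n+2}$ up to the gauge action, after adjusting $b^{(n)}$ to $b^{(n+1)}$ by an element of $F^{n+1} L^0$ (no lifting through $f$ is needed here; the gauge lives in $L$).

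For injectivity, suppose $q_1, q_2 \in V_K$ and $b \in L^0$ with $\exp(b)\cdot(d+f(q_1)) = d+f(q_2)$. I would construct, again inductively and term-by-term in the filtration, elements $a^{(n)} \in K^0$ and $c^{(n)} \in L^0$ (a correction in $L$) such that $\exp(a^{(n)})\cdot(d+q_1) \equiv d+q_2 \pmod{F^{n+1}}$ and $f(a^{(n)}) + c^{(n)} \equiv b \pmod{F^{n+1}}$ with $c^{(n)}$ equal to a $d_L$-coboundary modulo $F^{n+1}$ (so that it does not disturb the gauge action in cohomology). At each step the obstruction to lifting $b$ to something of the form $f(a) + (\text{exact})$ lives in $H^0(L)/\mathrm{im}\, H^0(f)$, which is zero by the epimorphism hypothesis, and the ambiguity in $a^{(n+1)}$ then sits in the kernel of $H^1(f)$, which is zero since $H^1(f)$ is injective (part of its being an isomorphism). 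Completeness again produces $a = \lim a^{(n)} \in K^0$ with $\exp(a)\cdot(d+q_1) = d+q_2$, so $[q_1] = [q_2]$ in $M_K$.

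The main obstacle is the bookkeeping in the inductive step: one must verify that the discrepancies which arise at level $F^{n+1}$ really are cocycles (this follows formally from squaring the MC equation and from the filtration multiplicativity of the bracket), that the $H^i(f)$ hypotheses apply to the right classes, and that the successive adjustments of $q^{(n)}$, $b^{(n)}$, $a^{(n)}$ are consistent modulo lower filtration. The convergence aspect is handled cleanly by the completeness and degreewise boundedness of $K$ and $L$, which guarantees that the infinite sums in $\exp(b)$ and in the MC equation produce well-defined elements, and ensures the limits satisfy the exact (non-approximate) equations.
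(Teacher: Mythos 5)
Your proposal is correct and follows essentially the same route as the paper: an induction along the filtration in which the obstruction to lifting a Maurer--Cartan element lies in $H^2$ (killed by the monomorphism $H^2(K)\to H^2(L)$ since its image bounds in $L$), the lifting/matching step uses the isomorphism on $H^1$, the gauge comparison for injectivity uses the epimorphism on $H^0$, and completeness plus degreewise boundedness give convergence of the successive approximations. The bookkeeping you flag (that the level-$n$ discrepancies are cycles modulo higher filtration, and that the $H^i(f)$ hypotheses apply at each filtration stage) is exactly what the paper's preliminary lemma on filtered complexes and its explicit $c_n$, $a_{n+1}$, $e_{n+1}$, $z_{n+1}$ manipulations are there to handle.
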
\label{1iso}

\begin{definition}\label{indegree1}  Such an  $f$ will be called an  \emph{ (homology) equivalence in degree 1}.  The de- creasingly filtered dg Lie algebras $K$ and $L$ will be said to be (homology) equivalent in degree 1.
\end{definition} 

By \emph{ bounded above in  each  degree}, we mean  there exists $N(i)$  such that $F^ n L_i  Õ=0$ for $n \geq N (i).$

The proof will, in fact, show that $f$ induces a homeomorphism  between the appropriate
quotient topologies on $M_K$   and $M_L $.

\begin{lemma}
 Let  $f :  K  \to L$  be a  map  of decreasingly  filtered  complexes,  complete and bounded from  above in each degree.  Suppose $i$ is an index such that $H^{i-1}(gr f )$ is injective and $H^i (gr f )$ is surjective,  then the same is true  respectively of $H^{i-1}(f )$ and $H^i (f ).$
 \end{lemma}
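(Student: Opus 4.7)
The plan is to compare the spectral sequences of the two filtered complexes. The decreasing filtrations on $K$ and $L$ yield spectral sequences $E_r(K)$ and $E_r(L)$ with $E_1^{p,q} = H^{p+q}(\mathrm{gr}^p)$, and $f$ induces a morphism $E_r(f)$. Translating the hypothesis, $E_1(f)$ is injective in every bidegree with $p+q = i-1$ and surjective in every bidegree with $p+q = i$. Because $K$ and $L$ are complete and bounded above in each degree, for each bidegree $(p,q)$ the differentials $d_r$ leaving or entering $E_r^{p,q}$ vanish for $r$ sufficiently large, so the pages stabilize and the spectral sequences converge strongly to the associated graded of $H(K)$ and $H(L)$ respectively.

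First I would propagate the injectivity/surjectivity pattern through the pages by induction on $r$. The key observation is that $d_r$ raises total degree by one, so the outgoing differentials from bidegrees of total degree $i-1$ land in total degree $i$, where by inductive hypothesis $E_r(f)$ is surjective, and the incoming differentials to bidegrees of total degree $i$ come from total degree $i-1$, where $E_r(f)$ is injective. This balance lets one run paired diagram chases: to propagate surjectivity at total degree $i$, lift $[y] \in E_{r+1}(L)$ to a cycle $y = f(x)$ using the inductive surjectivity, then use the boundedness-induced stabilization (so that $d_r x$ eventually lies in a trivial $E_r(K)$) to conclude $x$ is itself a cycle; a dual argument handles injectivity at total degree $i-1$. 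The upshot is that $E_\infty(f)$ is injective in total degree $i-1$ and surjective in total degree $i$.

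Next I would transfer these properties from $E_\infty$ to $H$ using the filtration on cohomology. For injectivity of $H^{i-1}(f)$: given $\alpha \in H^{i-1}(K)$ with $f\alpha = 0$, boundedness of the filtration on $H^{i-1}(K)$ (inherited from $K$) makes the largest $p$ with $\alpha \in F^p H^{i-1}(K)$ well defined, giving a nonzero class in $E_\infty^{p,\,i-1-p}(K)$; injectivity of $E_\infty(f)$ in this bidegree then contradicts $f\alpha = 0$. For surjectivity of $H^i(f)$: given $\beta \in H^i(L)$, inductively construct partial lifts $\alpha_p \in H^i(K)$ with $f\alpha_p - \beta \in F^{p+1} H^i(L)$ by successively lifting the leading term in $E_\infty^{p+1, \, i-p-1}(L)$ back to $K$ via the surjectivity of $E_\infty(f)$. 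Since the filtration on $L^i$ vanishes for $p \geq N(i)$, the process terminates in finitely many steps and produces an exact lift.

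The main obstacle is the propagation step between consecutive pages: the conclusion at total degree $i-1$ relies on lifting elements from total degree $i$ (and vice versa at total degree $i$), which is exactly what the \emph{combination} of the two hypotheses supplies, but only in a delicately paired form. Making this precise requires careful bookkeeping of bidegrees and exploitation of the boundedness assumption to annihilate the problematic differentials at large $r$; without boundedness, one cannot reduce the inductive statement to a finite number of stages at each bidegree.
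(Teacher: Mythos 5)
Your architecture (compare the spectral sequences of the two filtrations, get the conclusion on $E_\infty$, then climb the filtration on cohomology) is a legitimate alternative to the paper's argument, which instead inducts directly on the filtration index using the long exact sequences of $0 \to F^{n+1} \to F^{n} \to \mathrm{gr}^{n} \to 0$ and then sums a telescope using completeness; your final transfer step from $E_\infty(f)$ to $H(f)$ is fine. The gap is in the propagation step, and it is not a bookkeeping issue: you have paired the wrong differentials. For surjectivity of $E_{r+1}(f)$ in total degree $i$, incoming differentials are harmless (boundaries are quotiented out); the obstruction is the \emph{outgoing} differential on the $K$ side: a lift $x$ of a $d_r$-cycle $y$ satisfies only $f(d_rx)=0$, and $d_rx$ sits in total degree $i+1$, where no injectivity of $E_r(f)$ is assumed. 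Dually, for injectivity in total degree $i-1$ the obstruction is a class $w\in E_r^{p-r,q+r-1}(L)$ of total degree $i-2$ with $d_rw=f(x)$; lifting $w$ to $K$ requires surjectivity of $E_r(f)$ in total degree $i-2$, which is also not assumed. The hypotheses control exactly the two total degrees that are \emph{not} involved in either obstruction. The appeal to boundedness kills these obstructions only for $r$ large relative to the filtration index, but the induction must pass through the low pages first, and a failure of injectivity/surjectivity created there cannot be repaired later.

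That this is a genuine obstruction, not an artifact of your route, is shown by a two-step filtration: let $K=\Q y$ in degree $0$ with $d=0$, $F^1K=K$, $F^2K=0$; let $L=\Q z\oplus\Q y'$ in degrees $-1,0$ with $dz=y'$, $F^1L=\Q y'$, $F^2L=0$; and let $f(y)=y'$. Taking $i=1$, $H^{0}(\mathrm{gr}\,f)$ is injective and $H^{1}(\mathrm{gr}\,f)$ is (vacuously) surjective, yet $H^{0}(f)\colon \Q\to 0$ is not injective; the culprit is precisely $d_1\colon E_1^{0,-1}(L)\to E_1^{1,-1}(L)$, a differential entering total degree $i-1$ from total degree $i-2$ whose source is not hit from $K$. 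So your induction already fails from $E_1$ to $E_2$, and in fact the lemma needs the supplementary hypotheses that $H^{i-2}(\mathrm{gr}\,f)$ be surjective (for the injectivity conclusion) and $H^{i+1}(\mathrm{gr}\,f)$ be injective (for the surjectivity conclusion). These are exactly what a four-lemma chase demands at the point where the paper asserts that ``$x$ comes from $y\in H^{i-1}(F^{n+1})$ \emph{and} $H(f)(y)=0$,'' and they are available in the paper's applications, where the graded hypotheses are imposed in three consecutive degrees. With them added, either your spectral-sequence induction or the paper's filtration-level induction closes up.
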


\begin{proof}    For the injectivity,  consider the induced map of cohomology sequences for $K$ and
$L$ respectively arising  from $0 \to F^{n+l}  \to  F^n  \to  gr^n F \to 0$.


Consider $x \in H^{i-1}(F^n )$ such that $H (f ) (x) = 0.$  A standard  diagram chase shows $x$ comes
from  $y  \in H^{i-1}(F^{n+l})$ and  $H (f )(y)  = 0.$ 	Starting  with $F^{N (i-1)}= K^{i-1},$ we have classes
$x_k   \in H^{i-1}(F^{ N(i-1)+k} )$ such that  $x_k  - x_{k+l}  = 0$ in $H^{i-1}(F^{ N(i-1)+k} ).$
Now for  $x \in H^{i-1}(K)$
such that  $H (f )(x) = 0,$ we have $$x=\sum_{k\geq 0}x_k  - x_{k+l},$$
which makes sense since the filtration  is complete.  On the other hand,  the right hand side has each term  0, so $x$ is 0 and $H^{i-1}(f )$ is injective.

Similarly, for $x \in H^i (K ) = H^i (F^{ N (i)}), $ 
there  is $y \in H^i (F^{N (i)})$ such that $ x - H (f )(y)$ comes from $H^i (F^{ N (i)+1}).$   By induction  then,
there are $y_k  \in H^i (F^{ N (i)+k })$ such that $x - \sum^{k-1}_{j=1}H(f)(y_j)$ comes from 
$H^i (F^{ N (i+1}), $ hence, in the
limit, $x =H (f )(\sum y_k ).$
 \end{proof}

The following lemma is familiar in the ungraded  case.

\begin{lemma}
Suppose $K$  is a complete dg Lie algebra .  If $\theta = d + p \in d+ K^1$  and $b \in K^0$
is of positive filtration  such that $ (exp\  b)\theta = \theta$, then $[b, \theta] =0.$
\end{lemma}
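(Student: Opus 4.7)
The plan is to expand the hypothesis $(\exp b)\theta=\theta$ as a power series in $ad\,b$ acting on $\theta$, and then bootstrap the resulting identity up the filtration using completeness.

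First I would rewrite the hypothesis. By definition,
$$(\exp b)\theta \;=\; \sum_{n\ge 0}\frac{1}{n!}(ad\,b)^n\theta,$$
so $(\exp b)\theta=\theta$ is equivalent to
$$[b,\theta] \;=\; -\sum_{n\ge 2}\frac{1}{n!}(ad\,b)^n\theta.$$
Completeness of $K$ together with $b$ lying in strictly positive filtration $F^m$ ($m\ge 1$) ensures that the series on the right converges in the incorporated Lie algebra $K[d]$, so the identity is genuine. Using the standard compatibility $[F^i,F^j]\subset F^{i+j}$ (extending the filtration to $K[d]$ by declaring $d\in F^0$, which makes $\theta=d+p$ lie in $F^0$), induction on $n$ gives $(ad\,b)^n\theta \in F^{nm}$. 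In particular the right-hand side of the displayed equation already lies in $F^{2m}$, yielding the first improvement $[b,\theta]\in F^{2m}$ over the trivial $[b,\theta]\in F^m$.

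The key step is then an iteration. Suppose we have shown $[b,\theta]\in F^{km}$ for some $k\ge 1$. Writing $(ad\,b)^n\theta=(ad\,b)^{n-1}[b,\theta]$ for $n\ge 1$, we get $(ad\,b)^n\theta\in F^{(n-1)m+km}\subset F^{(k+1)m}$ for every $n\ge 2$. Summing and using completeness, $[b,\theta]\in F^{(k+1)m}$, and induction gives $[b,\theta]\in\bigcap_{k\ge 1}F^{km}$. Completeness of the filtration then forces $[b,\theta]=0$.

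The main obstacle is really just bookkeeping about convergence: one must ensure that the infinite series defining $(\exp b)\theta$ converges in $K[d]$ and that infinite sums of terms in $F^{(k+1)m}$ remain in $F^{(k+1)m}$. Both are automatic from completeness together with the positivity of the filtration of $b$, since the $n$-th term lies in $F^{nm}$ and the partial sums form a Cauchy sequence. The conceptual heart of the argument is the observation that $[b,\theta]$ is the ``linear part'' of the expansion of $(\exp b)\theta-\theta$, with every higher-order correction of strictly greater filtration, so completeness pushes the linear part arbitrarily deep into the filtration until it vanishes.
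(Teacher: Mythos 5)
Your proof is correct and is essentially the same argument as the paper's: both isolate $[b,\theta]$ as the lowest-filtration part of $(\exp b)\theta-\theta$, observe that the quadratic and higher terms lie strictly deeper in the filtration, and bootstrap to conclude $[b,\theta]$ has arbitrarily high filtration, hence vanishes. The only cosmetic difference is that you increment the filtration by $m=\mathrm{filt}(b)$ per step while the paper increments by $1$.
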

\begin{proof}
We will show that  $[b, \theta]$ has arbitrarily  high filtration  and hence is zero.  Suppose
$[b, \theta] = c_n \in F^nK^1.$ 
Then
\begin{align}
exp (b) \theta&= \theta + [b, \theta] +1/2[b, [b, \theta]] + \cdots\\
&=\theta + c_n + 1/2[b,c_n] + \cdots
\end{align}
\noindent
If $ (exp\  b)\theta = \theta$, then $c_n + 1/2[b,c_n] + \cdots =0$, but $b$ has positive filtration, so
$1/2[b,c_n]$ and the further terms come from $F^{n+1}$, hence for the sum to be zero, $c_n$ must come from
$F^{n+1}.$ Thus $[b,\theta]$ has arbitrarily high filtration and must be zero.
\end{proof}
\begin{proof} {\bf of Theorem \ref{1iso} }
The map $f:K\to L$ induces a map of the spaces of perturbations  
$f : V_K  \to V_L .$\newline

$f : V_K\to V_L$ is {\bf surjective}

Let $q\in L^1$  be a perturbation, i.e.  $(d + q)^2 = 0.$  Assume we have constructed $ p \in K^1$
and $b \in L^0$  such that $(d + p)^2 = a_{n+l}$  of filtration  $n + 1$ and $(exp\  b)(d+ f p) = d + q + c_n$   with
$c_n$   of filtration  $n \geq 1.$  Squaring the second equation gives

$$((exp\  b)(d + fp))^2 = [d, c_n ] + [q, c_n ] + c_n^ 2$$

\noindent while applying$ (exp\  b)^2f$  to the first shows this is also $(exp\  b)^2f a_{n+1} $.  Since $q$ is of filtration
$\geq 1$ and $n \geq 1$, this shows $[d, c_n ]$ is of filtration  $n + 1$,  i.e.,  $c_n$   is a cycle mod filtration  
$n + 1$
and hence there exists $r \in K^1$ of filtration  $\geq n + 1$ and $c \in L^0$  such that

$$(exp (b + c))(Õd+ f p + f r) \congo  d+ q$$
 
\noindent
modulo filtration  $n + 1$. Since $(d + q)^2 = 0$, we can further  choose $r$ so that $(d + p + r)^2$ is of filtration $n + 2$, completing the induction.  Thus, since $K^1$  and $L^0$  are complete, $f : V_K  \to V_L$ is onto.

$M_K\to M_L$   is {\bf  injective}
 
Suppose we have perturbations  $p, q \in K^1$  which are equal modulo $F^n K$
  and  such that
 $(exp\  b)(d + f p) \congo d + f q$ for $b \in L^0 $  of positive filtration.   Write  $q - p =a_n$   of filtration  $n$,
then in $L/F^{n+1} L$ we have
$$[b, d + f p] + f a_n ,$$
so the isomorphism  $H^1 (K, d+ p) \simeq H^1(L, d+ f p)$ implies there is a $c \in K^0$  such that

$$[c, d + p] \congo a_n  \text{ modulo} F^{n+1} .$$
\noindent
Now $(exp\  c)(Õ + p) \congo d+ q$ modulo $F^{n+1}.$

On the  other  hand, $ (exp\  b)(d + f p)  \congo d+ f q$ implies  $[b, d + f p]  = f a_n  + e_{n+1}$   with
$e_{n+} \in   F^{ n+1} L.$  Since $q$ is a perturbation, we have $[d+ p, a_n ] \congo 0$ modulo $F^{ n+2} ,$
 so $e_{n+1}$  is a $(d + f p)$-cycle modulo $F^{ n+2} $. Thus there is a $(d + p)$-cycle 
 $z_{n+1}  \in F^{ n+l} K /F^{ n+2}$  such that
$e_{n+1}  = f z + [g, d + f p]$ for some $g \in L^0 /F^{ n+2} $. Replace $b$ by $bÕ= b - f c - g$ and $c$  by
$ c'$  such that  $[c', d + p] \congo a_n  - z_{n+l}$  modulo $F^{ n+2} $.  Consider  $(exp\  b')(exp\  f c')(d + f p). $ Modulo $F ^{n+2},$
this is
\begin{align}
&d+ f p + [f c', d + f p] + [b', d + f p]\\
&\congo  d + f p + f a_n  Ñ f z_{n+1}  + f a_n  + e_{n+1}  Ñ f a_n  Ñ e_{n+1}  + f z_{n+1}\\
&\congo  d+ f p + f a_n\\
&\congo  d+ f q.
\end{align}

\noindent The induction  is complete.
\end{proof}
$M_L$   as a {\bf space}

As promised, we point out that if $K$  and $L$ are regarded as topological vector spaces with the topology given by the filtrations,  then $ f : K \to L$ is an open map and hence $V_K  \to V_L$  is an open surjection  or quotient map.  Thus the quotient $M_K   \to M_L$   is not only a bijection but a homeomorphism.
Notice  that  if $H^2(f)$  is mono  and  $H^1(f )$  only onto,  the  first  half of the  proof goes through, i.e., 
$M_K   \to M_L$   will still be onto.  In particular, let $ f : K \to L$ be defined by

$$K^i  = 0, 	i \leq 0,$$
$$K^1 \text{is a complement to\ } dL^0 \subset L^1    (i.e.,  L^1=dL^0 \oplus  K^1 ),$$
$$K^i  = L^i , 	i > 1.$$

\noindent Thus $H^i(f )$ is an isomorphism  for $ i > 0$, so $V_K  = M_K   \to M_L$  is onto.

Observe that  in classifying homotopy  types, we began with perturbations of the filtered model $(SZ, d)$  but  have  also considered  other  models such  as $A(L(\HH)).$ 	This  led us  to consider
$K \subset Der\ L(H)$  and $L \subset Der\ A(L(\HH)).$  The homology isomorphism $sL(H) \sharp Der\ L(\HH)  \to Der\ A(L(\HH))$ of Theorem  
\ref{T:semi-iso}  restricts to a homology isomorphism $K \to L$ since the weight decreasing derivations of $L(\HH)$ induce derivations of $A(L(\HH))$ decreasing weight by the same amount  and  $sL(\HH)$ corresponds to derivations  of  $A(L(\HH))$ which do not  decrease weight. Finally,  the natural  map $Der\ C (L(\HH)) \to Der\ A(L(\HH))$ is an  isomorphism provided $L(\HH)$ is of finite type  and concentrated in degrees $<0,$ 
as it is for $\HH$ simply connected  of  finite type. (Recall $ Der\ C (L(\HH))$ consists of coderivations.)

\vfill\eject
\def\Linf{$L_\infty$}
\def\eps{\epsilon}

\section{Control by $L_\infty$-algebras.}\label{Linfty}

An essential ingredient of our work is the combination of the
deformation theoretic aspect with a homotopy point of view.
 Indeed we adopted  the
philosophy, later promoted by Deligne \cite{deligne:gm} in response  to
Goldman and Millson \cite{ goldman-millson} (see \cite{gm:ihes} for a history of that development),
that any problem in deformation theory is ``controlled''  by
a differential graded Lie algebra, unique up to
quasi-isomorphism of \dgLs. Neither the variety $V$ nor the group $G$ are unique,
 but the quotient $M = V/G$ is (up to appropriate isomorphism).
 

Implicit in the use of quasi-isomorphisms, even for strict dg Lie algebras, is the fact that $L_\infty$-morphisms  respect the deformation and moduli space functors.

\subsection{Quasi-isomorphisms and homotopy inverses}


\begin{dfn}\label{quism} A morphism in a category of dg objects is a \emph{quasi-isomorphism} if it induces an isomorphism of the respective cohomologies \emph{as graded  objects}. For dg objects over a field,  quasi-isomorphisms are also known as
\emph{weak homotopy equivalences.}
\end{dfn}
Indeed, for dg vector spaces, a quasi-isomorphism always admits a homotopy inverse in the category of dg vector spaces, (i.e. a chain homotopy inverse). That is,
a morphism $f: A\to B$ is a homtopy equivalence means there exists a homotopy inverse, a morphism 
$g:B\to $A such that $fg$ is homotopic to $Id_B$ and $gf$ is homotopic to $Id_A .$
If $f$ respects  additional structure, such as that of a \dgL, the inverse $g$ need not; however, it will respect that structure \emph{up to homotopy} in a very strong sense, e.g. as an \Linf-morphism.
Thus, even when a controlling \dgL is at hand, comparison of relevant \dgLs is to be in terms of $L_\infty$-morphisms. Hence,  one should consider control more generally by $L_\infty$-algebras. In terms of a \dgL  $L$ as we have been doing, an attractive candidate is its homology 
$H(L),$ not as the  obvious \dgL with trivial $d$ but rather with the more subtle $L_\infty$-structure as transferred from $L$, e.g. via a Hodge decomposition of $L$ \cite{johannes:Liepert}. This is why we introduced $L_\infty$-algebras in our early drafts, although they had been implicit in Sullivan's models. Since such algebras are now well established in the literature, we recall just a few aspects of the theory. 
The following definition follows our cohomological convention; $d$ is of degree 1. The original definition was homological, $d$ of degree -1 and thus the operations are of degree $k-2$.  Special cases of $L_\infty$-algebras $L$ occur with names such as
\emph{Lie n-algebras}.
An important distinction exists according to bounds for $L$  from  above or below.
\begin{definition}
  An \Linf{\bf -algebra} is a graded vector space $L$ with a sequence
  $[x_1,\dotsc,x_k]$, $k>0$ of graded antisymmetric operations of
  degree $2-k$, 
  such that for each $n>0$, the
  $n$-Jacobi relation holds:
  \begin{equation*}
  \sum_{k=1}^n \sum_{\substack{ i_1<\dotsb<i_k ;
    j_1<\dotsb<j_{n-k} \\
    \{i_1,\dotsc,i_k\}\cup\{j_1,\dotsc,j_{n-k}\}=\{1,\dotsc,n\} }}
    (-1)^\eps \, [[x_{i_1},\dotsc,x_{i_k}],x_{j_1},\dotsc,x_{j_{n-k}}] = 0 .
  \end{equation*}
  Here, the sign $(-1)^\eps$ equals the product of the sign $(-1)^\pi$
  associated to the unshuffle as a permutation
  with the sign associated by the Koszul sign convention to the action
  of  the permutation.
\end{definition}

The operation $x\mapsto[x]$ makes the graded vector space $L$ into a
cochain complex, by the 1-Jacobi rule $[[x]]=0$. Because of the
special role played by the operation $[x]$, we denote it by $d$.
An \Linf-algebra with $[x_1,\dotsc,x_k]=0$ for $k>2$ is the same thing
as a dg Lie algebra.
Just as an ordinary Lie algebra can be captured by its Chevalley-Eilenberg chain complex, so too
for \Linf-algebras by shifting the degrees.
In terms of the graded symmetric operations
\begin{equation*}
  \ell_k(y_1,\dotsc,y_k) = (-1)^{\sum_{i=1}^k(k-i+1)|y_i|} \,
  s^{-1}[sy_1,\dotsc,sy_k]
\end{equation*}
of degree $1$ on the graded vector space $s^{-1}L$, the generalized Jacobi relations
simplify to become
\begin{equation*}
  \sum_{k=1}^n \sum_{\substack{ i_1<\dotsb<i_k , j_1<\dotsb<j_{n-k} \\
      \{i_1,\dotsc,i_k\}\cup\{j_1,\dotsc,j_{n-k}\}=\{1,\dotsc,n\} }}
  (-1)^{\tilde{\eps}} \,
  \{\{y_{i_1},\dotsc,y_{i_k}\},y_{j_1},\dotsc,y_{j_{n-k}}\} = 0 ,
\end{equation*}
where $(-1)^{\tilde{\eps}}$ is the sign associated by the Koszul sign
convention to the action of $\pi$ on the elements $(y_1,\dotsc,y_n)$ of
$s^{-1}L$.  
Moreover, the operations $\ell_k$ can be summed (formally or in the nilpotent case) to a single differential and coderivation on $C(L)$.

 A quasi-isomorphism of \Linf-algebras is an \Linf-morphism
which is a
quasi-isomorphism of the underlying cochain complexes.

There are at least two sign conventions for the definition, which are usable cf. \cite{lada-markl, ls, getzler:nilp},  just as there are for the definition of an $A_\infty$-algebra, for which there is a `geometric' explanation:
a classifying space $BG$ can be built from pieces $\Delta^n \times G^n$ or from $(I\times G)^n.$

\subsection{$L_\infty$-structure on $H(L)$}
Since the \dgL algebras that are manifestly controlling the deformations of interest are huge, it is helpful to have at hand the induced \Linf -structure on $H(L)$.  Even though  $H(L)$ inherits a strict \dgL structure (with $d$ = 0), there is in general a highly non-trivial \Linf-structure such that $L$ and  $H(L)$ are equivalent \emph{as \Linf algebras}.
This \emph{transfer of structure} began in the case of dg associative algebras with the work of Kadeishvili  \cite{kad:Hfs}.
The definitive treatment in the Lie case (which is more subtle) is due to Huebschmann \cite{johannes:Liepert}.

In terms of the  $C(L)$ definition, almost everything we have done in earlier sections  carries over. 
What changes most visibly are the form of the Maurer-Cartan equation and the differential equation governing the equivalence.

Once we had that $L_\infty$-structure on $H(Der {\cal A}),$ 
our `perturbations' (also known as `integrable elements') could be regarded as solutions of what is now called the $\infty$-MC equation:
$$ \sum 1/n! [y,...,y, y ] = 0.$$  Notice even though the differential on $H(L)$ is zero, we can still have a non-trivial $L_\infty$ structure, i.e. suitably compatible multibrackets
$$[\ ,\dots, \ ]: H^{\otimes n}\to H.$$

 Implicit (only)  was the fact that the gauge equivalence of
the original \dgL  transfers to an $\infty$-action of $H^0$ on solutions of an $\infty$-MC equation; even though  $H^0$ is a strict Lie algebra, it is the $\infty$-action that gives the gauge equivalence.  Some researchers have missed this point and tried to use just the strict Lie algebra
acting on $H^1.$ By the way,  it's not so obvious that the $\infty$-action gives an equivalence relation, as we shall see later. 
\vfill\eject


\section{The Miniversal Deformation}\label{alggeom}
\subsection{Introduction} 
In this chapter, our point of view moves from homotopy theory to algebraic geometry.
The moduli space of rational homotopy types with given  cohomology $H$  (assumed here  finite dimensional)
has the form $W/G$, where $W$ is a conical affine algebraic variety and $G$ is an algebraic ``gauge" group, or rather, groupoid,
acting on $W$. Later we will treat $W$ more precisely as a \emph{scheme}, that is, a functor from algebras to sets or topological spaces.
Unfortunately, $ W/G$ is rarely what is called a fine module space. First, $W/G$ is usually not a variety - e.g. the ubiquitous 
presence of non-closed orbits in $W$ prevents  the points of $W/G$ from being closed . Second, even when $W/G$ is a variety, 
there  may well be 
no total space $X \to W/G$  with rational homotopy space fibers. ($X$  would be a differential graded scheme, represented 
algebraically by either  an almost free dgca, free just as gca, over $R$, where $Spec\ {R} = W/G $ or by an almost free dgla over $ {R}$.) 
We can deal with these difficulties by first considering  the ``moduli functor" $M$ of augmented rational homotopy types. 

For an 
augmented algebra A , with pointed affine scheme $Spec\ A =  S$, this functor assigns to $A$ or $S$  the set of  families
$Y \to S$, together with an 
isomorphism of the  special fiber with the formal space $X_\HH$, all modulo gauge equivalence. We view $Y \to S$ as a 
"deformation" of $X_\HH$, parameterized by $S$. This functor is not in general representable, as that would require a ``universal" 
deformation  $X \to W$ or fine moduli space. However,  there is nevertheless a ``miniversal" deformation $X  \to W$ inducing every 
deformation $Y\to Spec\ S$  by a map $S \to W$. This map is not unique , but its tangent at the base point is. The (Zariski) 
tangent space to $W$ at its base point is $H^1(L)$, where $L$ is the dgla of nilpotent derivations of the minimal (algebra or Lie 
algebra) model of $\HH$, as introduced previously. In the ``unobstructed" case, $W$  is the affine space $H^1(L)$, 
but in general, $W$  will be a  subvariety of $H^1(L)$ 
defined by $m$ homogeneous polynomial functions of degree at least 2, where $m= dim\  H^2(L)$. The nilpotent Lie algebra 
$H^0(L)$ is the
tangent space to a unipotent groupoid  $U$ , such that $W/U$ is the (not fine ) moduli space of augmented rational homotopy types 
with cohomology $H$. The groupoid $G$ for unaugmented homotopy types is an extension of $Aut\ H$ by $U$.

The construction of  $X,  W$  and $U$ proceeds from the minimal model $L'$ of the dgla of negative weight derivations  of a minimal model of $H$.
 This $L'$ is an $L_\infty$ algebra 
which consists of $H( L)$, together with higher order brackets $[a,b,c] \in H^{(|a|+|b|+|c| - 1)}$ etc. of all orders $>$ 2 , and all the brackets together satisfy 
the usual generalized  Jacobi identities \cite{ls, getzler:nilp}. 


 Then $W = MC(L)$ is the set of $a \in H^1(L)$ which satisfy the Maurer-Cartan integrability condition 
$$1/2[a,a] + 1/3![a,a,a] + ... =0$$
 (the sum is finite ), $U$ is the groupoid attached to the  $L_\infty$ algebra $L'$, and $X$ is constructed from the derivation representation of $L'$.

We give precise definitions and basic details in the following sections.

\subsection{Varieties and schemes}

An affine  $k$ variety, embedded in the affine space $k^n$, is the set of $n$-tuples in $k^n$ which satisfy a collection of 
polynomial 
equations from $P:= k[ x_1 , ... ,x_n]$. By contrast, an affine $k$-scheme $X = Spec\ P/I$
is given by the choice  of an ideal $I\subset P$. 
Then , for each overfield $j$ of $k$, we have the affine $j$- variety $X(j)\subset  j^n$, of points in $j^n$ which satisfy the polynomials 
of $I$. Thus, the equations $f = 0$ and $f^2 = 0$ define the same varieties, but different schemes.
The ideal $I$ is not determined by the 
variety $X(k)$, or even by the collection $\{X(j)\}$. However, if,  for a variable $k$-algebra $A$, 
we let $X(A)\subset A^n$ denote the the set of 
points in $A^n$ which satisfy the polynomials in $I$, then the functor $A\mapsto X(A)$
determines the ideal $I$ or the scheme X. Indeed:
$$X(A) = Hom (P/I , A ).$$

An  affine scheme is called \emph{reduced} if its ideal $I$ equals its radical- that is, $P/I$ contains no nilpotents except $0.$
Over an 
algebraically closed field, there is a one-to-one correspondence between reduced schemes and varieties - the two notions 
collapse to one .

A point $p$ in the variety $X(k)$ is the same as an augmentation of the algebra $R = P/I$. For a point $p=a=(a_1,\cdots,a_n)$,
the augmentation $X_a$ is defined by evaluating at $a$ all polynomials in $P:= k[ x_1 , ... ,x_n]$. The vector space 
$$X_a(k[\epsilon]), \epsilon^2 = 0$$ of augmentation preserving points in $X(k[\epsilon])$ is called 
the \emph{Zariski  tangent space} to $X$ at $p=a=(a_1,\cdots,a_n)$ and $$X_a(k[\epsilon_n]), \epsilon_n^{n+1} = 0$$\noindent
is the set of $n$-th order jets to $X$ at $p$.

In addition to being a functor from algebras to sets, $X = Spec\  R$ is also a local ringed space. The (scheme) points  $p$ of $X$ are the 
prime ideals in $R$; the closed subsets of $X$ are the primes  containing $J$ for $J$ an ideal in R , so that  the basic open sets are the 
subsets of the form $X_f = \{ p:f  \notin p \}$ for $f\in R.$ The closed and basic open sets in this Zariski topology support subschemes 
$Spec\ (R/J)$ and $Spec\ (R[1/f])$ respectively  and the direct limit of the $R[1/f]$, for $ f \notin p$, is the local ring  $R_p$ of  $X$ at $p.$ 
If we let 
$f(p)$ denote the image of $f \in R/p \subset R_p/pR_p$, then the  condition $ f(p) \neq 0$ defines the open set $X_f$.

A general  scheme is defined to be a local ringed  space which is locally affine; morphisms of schemes are morphisms of 
local ringed spaces. We have $Hom (Spec\ A , Spec\  B )= Hom ( B,A).$  As  a scheme, $X$ determines a variety 
$X(k) = Hom (Spec\  k , X)$.

The basic example of a non-affine scheme is projective space $P^n$, for which $P^n(k) = k^{n+1} -\{ 0\}$ modulo the action of 
$G_m = k-\{0\}.$ 
Thus
$P^n = \cup\  Spec\  k[x_0 /x_i, ... , x_n/x_i]$
 where $i$ runs from $0$ to $n$. A quasi-projective (resp. quasi-affine) scheme is a locally 
closed subscheme of projective (resp. affine) space.

Finally, a differential graded scheme is the common generalization of scheme and rational homotopy space which results from 
replacing algebras with flat differential graded algebras over algebras .

\subsection{Versal deformations}
In any representation $M = V/G$ of a moduli space, or rather functor, $M,$ as a quotient of a scheme $V$ by a group scheme $G$ acting on $V$, we call $V$ a {\bf versal}  scheme for $M$. In practice,  $V$ should
be the base scheme  of a family 
$X\to V$ which is a  {\bf versal deformation} of a special fiber $X_0$.
Here are the precise definitions . Let $X_0$ be a rational homotopy space, $(R,d)$ a filtered model of $X_0$ and $B$ an augmented algebra. Denote the augmentation ideal of $B$ by $m_B$.  
\begin{dfn}
A {\bf deformation of} $X_0$, parameterized by $Spec\ B$, is a family $Spec\ C \to Spec\ B$ where $C= (R\otimes B , d + e)$ and 
 \begin{itemize}
\item a) $e  \in  Der^1( R , R \otimes  m_B)$ lowers filtration,
\item b) $(d + e)^2 = 0$, viewed as a derivation of $R \otimes  B$.
\item c) $Spec\  C$ corresponds to  the   rational homotopy space of Sullivan  and of Getzler \cite{sullivan:inf, getzler:nilp}, 
which is the geometric realization of the dgca $C$.
\end{itemize}
\end{dfn}
Let $M(B)$ be the set of such deformations, modulo isomorphism. A deformation $X\to V$ will be \emph{versal} if the induced map $V \to M$ of 
functors is surjective, with a little more called ``smoothness ". We consider functors $X$ of augmented algebras. For these we assume that $X(k)=\{0\}$ is one point.

\begin{dfn}A map $X\to Y$ of functors  (from augmented algebras to sets) is \emph{smooth} if for each surjection$B\to  B' ((m_B')^2 = 0)$
 of augmented algebras , with $(m_B')^2 = 0$, the induced map 
$$
          X ( B ) \to X ( B' ) \times _{Y (B')} Y(B) 
$$
is surjective.
\end{dfn}
 If we take $B'$ to be the ground field $k$, we find that $X ( B ) \to Y ( B )$ is surjective. 
 
 We pause to investigate the smoothness condition for maps  in the context of formal geometry, where we replace schemes by formal schemes, augmented algebras by complete local algebras, polynomials by formal power series.
 \begin{itemize}
\item a) A scheme is smooth  (over $Spec\ k$ ) exactly when it is non-singular.
\item b) A group scheme ( in characteristic 0 ) is smooth.
\item c) A principal bundle is smooth over its base.
\item d) If a group scheme $G$ acts on a scheme $V$, then the map $V \to V/G$ of functors is smooth.
\end{itemize} 
In the presence of a $G_m$ action (``weighting"), we do not need to pass to the formal setting. For example , a smooth conical scheme is a weighted affine space, i.e. the Spec
of a weighted polynomial algebra. Recall that a formal 
rational homotopy space $X_\HH$  has a $G_m$ action, so that deformation data attached to it will also .
\begin{dfn} A deformation $X\to V$ is \emph{versal} if the induced map of functors $V\to M$ is smooth.
\end{dfn} 
If $V\to M$ is smooth, we will see that $M = V/G$, for suitable $G$. 
The definition of deformation  of  $X_0$ suggests that we can apply a construction to the dgla $L = Der_{-}(R)$ (for  $R$ a filtered model of $X_0$) 
to get a formally versal deformation of $X_0$. Indeed, we can attatch to $L$ the \emph{Maurer-Cartan scheme}  $V = V_L$ whose points with values in $B$ are given by 
$$
                                        V ( B )  = \{ e\in L^1 \otimes m_B | [d,e] + 1/2[e ,e ] = 0\}
$$

We claim first that $V = Spec\ A$, where $A = H^0 ( A ( L^+ ) )$,
provided $L^1$ has finite dimension. In fact, if we take $A = H^0 (A(L^+))$  and  $B$ is any augmented algebra, $ Hom (A ( L ) , B ) = V (B )$  so that $$Hom (A , B ) = Hom ( A(L) , B) = V(B).$$ Thus $V = Spec\  A= H^0 (A(L^+)).$ (Here $Hom$ denotes augmentation preserving morphisms.)

Next , the equality $Hom(A(L),A) = V(A)$ gives us a tautological $e \in V(A)$ and thus a differential $d + e \in Der_{-}(R \otimes A).$ This in turn yields a deformation $X  = Spec\ C \to  V$ with $C =  (R \otimes A , d + e)$. From chapter \ref{ }, we see that the induced map of functors $V\to  M$ is surjective and that $V/G = M$, where $G = exp\  L^0$. By example d) above, $ V\to M$ is  formally smooth or $X\to  V$ is a formally versal deformation of $X_0$.
 If $X_0 = X_\HH$, then $X\to V$ is a (conical) versal deformation of $X_\HH$.
\def\x{\times}

The above (formally) versal deformation is certainly not unique. We may replace $X\to V$ by $X \x (X_0  \x S ) \to V \x S$, where $S$ is smooth, 
to  get another. We can also change the model $R$ of $X_0$, or replace $L = Der_{-}(R)$ by a model 
$L' \to Der_{-}(R)$. These last replacements 
provide  an $L'$  which is quasi-isomorphic to $L$, but as we will see in section \ref{miniversal}, the corresponding deformations are related by a smooth factor as above. 

Finally, we discuss infinitesimal criteria for versality. A functor $X$ from augmented algebras to sets has a tangent vector space 
$TX = X (k[\epsilon]/\epsilon^2)$. For $X   = Spec\ A$, we have $TX = (m_B/m_B^2)^*$, while for $X = V_L$, we have $TX = Z^1(L)$ and for $X = M_L , TX = H^1(L)$. There are also normal spaces: $NSpec\ A = (I/m_PI)^*$, where $A = P/I$ for  $P$ a polynomial algebra with $I$ included in $(m_P)^2$.
For $M=M_L$, we have  $NM = H^2(L).$

 For any deformation $X\to  V$, we have Kodaira-Spencer maps

                                                                  $$ t : TV \to TM$$ 
                                                                   $$n: NV\to  NM.$$
$X\to V$ is formally versal if and only if $t$ is surjective and $n$ is injective. A deformation is  \emph{miniversal}   if it is versal and $t$ is an isomorphism .

\subsection{The miniversal deformation}\label{miniversal}
We give here the construction of the conical miniversal deformation of the formal rational homotopy space $X_\HH.$
The construction of the formally miniversal deformation of a general $X_0$ is similar. Let $R$ be a weighted model of $\HH$ and $L\to  Der_{-}R$
 a model of the negative weight derivations of $R$. (We will see that the construction is independent of these choices.) If $L^1$ has finite dimension,
  we get a conical versal deformation $X\to V$ of $X_\HH$. There are two ways in which $V$  is deficient as an approximation to a fine moduli space. First, it is not a homotopy invariant of $L$ - different $L$'s do not yield isomorphic deformations. (The versal deformation is a homotopy invariant of $L^+$, 
  not $L$.) Second, the approximation can be improved - the tangent space to $V$ is $Z^1(L)$, whose dimension is greater than or equal  to that of $H^1(L)$ , with equality when the deformation is miniversal or $d|L^0 = 0$. To achieve the latter, we can replace $L$ with an $L_\infty$ minimal model $L'\to L$ with $d' = 0$. That is, $L'$ is $H(L)$ with higher order brackets $[ h_1,\cdots,h_n] \in H(L)$ adjoined as in \ref{Linfty}.  
  We denote the resulting free dgca by $A(L')$. The  Maurer-Cartan condition becomes 
                 $$mc(x) : = 1/2[x,x] + 1/3![x,x,x] + \cdots= 0.$$
The sum is finite when $H(L)$ is of finite type, hence nilpotent. This $mc$ condition then specifies the miniversal scheme $W$
  and the map $ L'\to Der_{-}(R)$ gives us the  miniversal deformation $X\to W$.
  
There are two ways to  construct the miniversal deformation, or minimal model of $L$, assuming the tangent soace $H^1(L)$ has fintie dimension. 
First we may ignore the Lie algebras and  appeal to the general construction of Schlessinger \cite{schlessinger}  and then invoke weight and nilpotence conditions to get a conical family. Or one can solve the mc equation in $L$ by succesive approximations to yield the minimal model (in degree 1) of $L$, or the minimal model (in degree 0) of $A(L)$. We give an outline of the latter approach, assuming  $H^1(L)$ is finite
dimensional.

Let $L = H(L ) \+ R \+ dR$  be a decomposition of the complex $L$. Take homogeneous elements $h_1 , \dots, h_p \in Z^1(L)$ which induce a basis of $H^1(L)$ and let $t^1 , \dots, t^p$ be dual coordinates. Set $x_1 = \sum h_i  t^i.$ Then we can find $<h_i,h_j> \in R^1$ such that , in \L, $$mc( x_1 ) = - 1/2 \sum d <h_i,h_j> t^i t^j + [ h_i ,h_j ] t^i t^j,$$
\noindent where  $[\  ,\  ]$ is the naive bracket  $H^1(L) \otimes H^1(L) \to H^2(L)$.

 Set $ x_2 =\sum <h_i , h_j > t^i t^j$. Then $$mc( x_1 + x_2) = 1/2 \sum [h_i , h_j] t^i t^j - 1/3! \sum d <h_i , h_j , h_k > t^i t^j t^k + [ h _i , h_ j , h_k ] t ^i t^j t^k.$$ 

CHECK CAREFULLY is that right now?

Here  $< h,k,l> \in R^1$ and $[h , k, l] \in H^2 $ 
if $h, k,  l\ in H^1$.

Set $x_3 = 1/3! \Sigma <h_i h_j , h_k> t^i t^j t^k$, so

$$mc (x_1 +  x_2 + x _3 ) =  1/2 \sum [h_i , h_j]t^i t^j  + \sum 1/3! [h _i, h_j, h_k ] t^i t^j  t^k  + O(4)$$.

 Continuing we get 
$$mc_L(x_1 + x_2 +\cdots) = mc_{H(L)}(x_1).$$

Notice that the right hand side above is zero in $H(L) \o B$, where 
$B = H^0(A(L'^+))$ and $W  = Spec\ B$ is  the base of the miniversal deformation. The $x_i$ give a map 
 transforming $B$ to $A = H^0(A(L^+))$.
A systematic and generalized treatment of the above construction is given in Huebschmann and Stasheff
\cite{jh-jds}.
We now compare miniversal and versal deformations. If $X\to Spec\ A $ is versal and $Y\to Spec\ B$ is miniversal, 
then, by definition of these terms, we get weighted algebra  maps $u :A\to B$ and  $v: B\to A$ respecting the total spaces $X$ and $Y$. 
 The composition $uv$ is the identity  on the cotangent space $m/m^2  ( m= m_B)$ 
 and is therefore surjective. 
 But a surjective endomorphism of a Noetherian ring is an automorphism. Hence we can change $v$ to get $uv = identity$. 
 We claim now that we have $A = B \tensor C$, where $C$ is a weighted polynomial algebra .
If we denote the cokernel of the tangential  injection $u^* : T(B) \to T(A)$  by $U$, we may let $C$ be the polynomial algebra on the
dual  positively weighted vector space $U^*$.


Thus $TC = U$,
and the map $A\to C/(C_+)^2$ induces the trivial deforamtion over $C/(C_+)^2$. If we extend this family to a trivial family over $C$, the map extends to a map $A \to C$ inducing the trivial family over $C$, by the smoothness property of versality for $A$.  This, together with $u : A \to B$, gives us a map 
$A\to B \tensor C$  which is an isomorphism on tangent spaces and respects total spaces. For the backwards map, we have $v : B\to A$ and
$C\to C/(C+)^2$  lifts to $C\to A$, since C is a polynomial algebra. Thus we have:

\begin{theorem} A versal weighted family is isomorphic to the product of a miniversal weighted family and a trivial weighted smooth family $X_\HH \times Spec\ C \to Spec\ C$ with $C$ a polynomial algebra.
\end{theorem}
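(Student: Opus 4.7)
The plan is to follow the outline already indicated in the paragraph preceding the theorem, making precise each of its three ingredients: (1) producing comparison maps between $A$ and $B$ using the two smoothness properties, (2) constructing the complementary polynomial factor $C$ from the cokernel of the tangent map, and (3) verifying that the resulting map $A\to B\otimes C$ is an isomorphism. Throughout I would work in the weighted (conical) category, so that all local statements can be promoted to global ones without passing to formal completions; in particular, a weighted algebra map between non-negatively weighted algebras finitely generated in each weight which is an isomorphism on $m/m^2$ is automatically an isomorphism (by induction on weight, using Nakayama's lemma weight by weight).

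First, versality of $X\to\mathrm{Spec}\,A$ applied to the miniversal family $Y\to\mathrm{Spec}\,B$ yields a weighted algebra map $v\colon B\to A$ carrying $Y$ to $X$ up to isomorphism, and versality of $Y\to\mathrm{Spec}\,B$ applied to $X\to\mathrm{Spec}\,A$ yields $u\colon A\to B$ with $u^*Y\cong X$. Miniversality of $Y$ forces the composite $uv\colon B\to B$ to be an isomorphism on the Zariski tangent space $(m_B/m_B^2)^*$, since both $uv$ and $\mathrm{id}_B$ realize the same Kodaira-Spencer map $T(B)\to T(M)$ which is bijective. A surjective weighted endomorphism of a Noetherian weighted algebra is an isomorphism, so $uv$ is an automorphism of $B$ and we can adjust $v$ (replacing $v$ by $v\circ(uv)^{-1}$) to arrange $uv=\mathrm{id}_B$. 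Thus $u$ is a split surjection and $B$ is naturally a retract of $A$.

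Next, let $U=\mathrm{coker}\bigl(u^*\colon T(B)\to T(A)\bigr)$, viewed as a positively weighted vector space, and set $C=\mathrm{Sym}(U^*)$, a weighted polynomial algebra. The composition $A\twoheadrightarrow A/m_A^2 \to C/(C_+)^2$ sending $A$ to $U\subset T(A)^*$ equips $\mathrm{Spec}\,C/(C_+)^2$ with the trivial (first-order) deformation of $X_\HH$; extending to the trivial deformation $X_\HH\times\mathrm{Spec}\,C\to\mathrm{Spec}\,C$ and invoking the smoothness property of versality of $X\to\mathrm{Spec}\,A$ produces a weighted algebra map $w\colon A\to C$ classifying this trivial family. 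Pairing with $u$ gives a weighted algebra map $\Phi=(u,w)\colon A\to B\otimes C$ which, by construction, induces an isomorphism on tangent spaces (the tangent space of $B\otimes C$ decomposes as $T(B)\oplus U$, matching the corresponding split decomposition of $T(A)$).

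For the inverse direction, $v\colon B\to A$ is already at hand, and since $C$ is a polynomial algebra on $U^*$, the map $U^*\hookrightarrow T(A)^*\to m_A/m_A^2$ lifts to a weighted algebra map $C\to A$ (freeness of $C$ plus the weighted structure guarantees the lift without obstruction). Combining these via the tensor product gives a weighted algebra map $\Psi\colon B\otimes C\to A$, and by construction $\Psi$ is an isomorphism on tangent spaces. Applying the weighted Nakayama principle noted above to both $\Phi$ and $\Psi$, each is an isomorphism, which gives the desired decomposition $A\cong B\otimes C$ and, at the level of total spaces, $X\cong Y\times_{\mathrm{Spec}\,B}(X_\HH\times\mathrm{Spec}\,C\to\mathrm{Spec}\,C)$. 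The main subtlety I expect is in step three: one must check that the lift $w\colon A\to C$ produced by smoothness is compatible with the chosen splitting of tangent spaces, i.e., that $w$ restricted to the kernel of $u$ genuinely realizes the isomorphism $\ker(u)/(\ker u\cdot m_A)\cong U$; this is where the weight grading is indispensable, since it lets one build $w$ weight by weight and align it with the retraction furnished by $uv=\mathrm{id}_B$.
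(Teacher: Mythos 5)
Your proof follows essentially the same route as the paper's: comparison maps $u,v$ supplied by versality, normalizing $uv=\mathrm{id}$ via the surjective-endomorphism-of-a-Noetherian-ring argument, taking $C$ to be the weighted polynomial algebra on the dual of $\mathrm{coker}\bigl(u^*\colon T(B)\to T(A)\bigr)$, extending the trivial first-order family over $C/(C_+)^2$ to one over $C$ by the smoothness property of versality, and concluding from the isomorphism on tangent spaces. The only blemish is a direction slip at the start (versality of $X\to \mathrm{Spec}\,A$ applied to $Y$ produces $u\colon A\to B$ with $u^*X\cong Y$, rather than $v$, and likewise for the other map), which does not affect the substance of the argument.
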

In particular, two miniversal deformations are isomorphic, but not canonically.

The miniversal family $X \to W$ is formally versal for each of its fibers. At each point $p$ of the base, the deformation splits, formally, into the product of the miniversal deformation of the fiber and a trivial deformation over the orbit of $p \in W$.

Now we list sufficient conditions for  the finiteness 
we need. Given $H$ of finite type, let $\gg$ be the free  dgla on the suspension of $H^{+*}.$
 Then $R =A(\gg) $ models $\HH$ and  $L = Der_{-} (\gg)$ is a model of $Der_{-}(A(\gg))$
\begin{itemize}
\item                                    a) $L$ will have finite type if $\HH$ is simply connected of finite dimension. 
 \item b) $H(L)$ will have finite type if if $\HH$ is finitely generated in even degrees and the associated projective scheme $Proj\ H$ is smooth.
  (Under the assumptions on $\HH$, we will have $Spec\  \HH$ as the cone over $Proj\ \HH$.) 
 \item c) We conjecture that $H(L)$ will have finite type if $\HH$ is Koszul.
 \end{itemize}
Finally , we point out $W = Spec\ A$ is the cone over $U= Proj\ A$ and the family $X\to W$, i.e. $Spec\ C \to Spec\ A$, is the cone over the family 
$X' \to U$ where $X' = Proj\ C$. The projective family has the same fibers as $X \to W$ does, i.e. every space with cohomology $\HH$, except that 
$X_\HH$ is missing. But the base is now compact.
                  
\subsection{Gauge equivalence for nilpotent $L_\infty$-algebras}

ÊFor any nilpotent \Loo -algebra $K$, the variety $MC(K)$ of Maurer-Cartan elements in $K$ is the set of elements $x \in K^1$ 
satisfying the Maurer-Cartan condition 

ÊÊÊÊÊÊÊÊÊÊÊÊÊÊÊÊÊÊÊÊÊÊÊÊÊÊÊÊÊÊÊÊÊÊÊÊ$$ÊÊÊ[ x ] + 1/2 [ x,x ] + 1/3! [ x,x,x ] + \cdots = 0.$$

\noindent
Here $[ x ] = dx$, and the sum is finite by nilpotence, cf. \cite{getzler:nilp}.
The scheme $V_K$ underlying the varierty $MC(K)$ is defined by the equation $V_K(B) = MC(K \o m_B)$ or $V_K =  Spec\  H^0(A(K^+)).$ 
 If $L$ is the minimal model of Ê$K$, i.e.
 $ d = 0 \in L$, Êthen  the Maurer-Cartan scheme of $L$ isgiven by the equations
$$1/2[x,x] + 1/3![x,x,x] + \cdots = 0$$  for $x \in L^1.$
Then
$W=V_{L}$ is what is called the \emph{ miniversal Êscheme}  of $K$ - it is a homotopy invariant of $K$. 

In particular, let $K$ denote the tangent Lie algebra consisting of the negative weight derivations of either  an algebra or Lie algebra model of $\HH$. and let $L$ be a minimal \Loo  model ($d = 0$) of $K$. We assume that \L is concentrated in degrees 0, 1, 2 and has finite type. We then have the miniversal deformation $X \to W$ of $X_\HH$, where
$W = Spec \ A$ for $A=H^0(A(L^+)) $.
In case $L$ is a dgla (third and higher order brackets as 0), the equivalence relation is given by the action of the unipotent group $G=exp(L^0)$ on the pure quadratic variety $MC(L)$ associated to $W$.


Here we have the setting for the Deligne groupoid attached to $L$. (A groupoid is a small category such that all morphims are invertible. The objects here form $MC(L)$ and the morphisms are given by the action of $G$.)  Replacing $L$ by $L\o B$, we obtain the Deligne stack (functor from algebras to groupoids) and the quotient of $W$
by the stack is the \emph{moduli functor} $\mathcal M$.
For general $L$, this equivalence relation on $W$ will have the form $U \rightrightarrows W$ for a scheme $U\rightrightarrows W$, where both relative tangent spaces identify to $H^0(L),$ together with an associative ``Campbell-Hausdorff " law of composition $U\times_W  U \to U$ which guarantees the transitivity of $U$, c.f. Schlessinger   \cite{schlessinger}.


Thus we again have a stack in which the objects over $B$ are given by $W(B)$ and the morphisms by $U(B)$.
 If we choose $U$ minimal, the two smooth maps $U \rightrightarrows W$ each have relative tangent space $L^0 = H^0(L)$, which is a nilpotent Lie algebra, and $U$ will be unipotent. The replacement of $L$  by another model of the tangent algebra will result in an equivalent, but larger, stack  with unknown  tangent spaces.

A description of $U$ in terms of $L$  is given by Getzler \cite{getzler:nilp}. He starts with the simplicial set (actually, variety associated to a simplicial scheme) $MC_{\bullet}$.
whose $n$-simplices are given by  $MC_n = MC(L \o  \Omega _n)$, where $\Omega_n  = k [t_0,..., t_n, dt_0,..., dt_n ] /(t_0+ ... +t_n)$.  According to Fukaya, Oh, Ohta , and Ono , [FOOO],   we get an  equivalence relation $R = MC_1$ on $MC(L) = MC_0.$
which unfolds as follows : $x_0 \in MC(L$) is  equivalent to $x_1 \in MC(L)$  if and only if there is an $x = a(t) + b(t) dt \in L[ t,dt ]$ with $a(t) \in MC(L[t ] ), b(t) \in L^0[ t ]$ with $a(0) = x_0 , a(1) = x_1$ and  $da/dt = 1/2 [a, b] + 1/3! [a ,b ,b] + \cdots .$ But this variety $R$  is too large: it is not even a groupoid. It has the relative tangent space $L^0[ t ]$ instead of $L^0.$ Getzler obtains a groupoid by replacing  $MC_\bullet$ by a much smaller sub-simplicial set (scheme)  $\gamma _\bullet(L)$ given by the vanishing of the ``Dupont gauge "  $s: MC(L) _\bullet \to  
 MC _{\bullet -1}(L)$.  (This will restrict $b(t)$ above to be constant.) This $\gamma_\bullet$ is the nerve of a groupoid.  
 The stack $U$ is obtained by replacing $L$ by $L\o B$; it determines the moduli functor $M.$

\begin{theorem} The following stacks are equivalent:

a) The stack whose objects over $B$ are families $X\to Spec \ B$ having fiber cohomology $\HH$ and whose morphisms are isomorphisms of families.

b) The stack  $U$ associated to the the minimal model of the tangent algebra of $\HH$.
\end{theorem}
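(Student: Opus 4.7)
The plan is to produce, for each augmented algebra $B$, a functor between the groupoid of families on side (a) and the groupoid $U(B)$ on side (b), and then verify naturality in $B$ together with essential surjectivity and full faithfulness. First I would fix a minimal \Loo-model $L$ of the tangent algebra of $\HH$, so that $L = H(L)$ as a graded vector space with $d=0$ and higher brackets controlling the obstructions; this is the $L$ appearing in the construction of the miniversal deformation $X \to W$ in \S\ref{miniversal}. On the object level, the correspondence is already in hand: a family $Y \to \mathrm{Spec}\,B$ with fiber cohomology $\HH$ is, after choosing a model, a perturbation $e \in \mathrm{Der}^1_{-}(R)\otimes m_B$ satisfying $(d+e)^2=0$, i.e., an element of $MC(\mathrm{Der}_{-}(R)\otimes m_B)$; by the \Loo-minimal model $L \to \mathrm{Der}_{-}(R)$ and homotopy invariance of the Maurer--Cartan functor (Theorem \ref{1iso} in its \Loo-guise), this is canonically identified, up to gauge, with an element of $MC(L\otimes m_B)=W(B)$.

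Next I would handle morphisms. An isomorphism of families corresponds to a filtered homotopy of classifying maps, which by the computation in \S\ref{proof main} is encoded by the pair $(\eta(t),\zeta(t))$ satisfying the two differential equations, and in the \Loo{} setting by the generalized equations stated at the end of \S\ref{homotopycoalg}. Getzler's construction produces $\gamma_\bullet(L)$ as the nerve of the groupoid whose $1$-simplices are exactly pairs $(a(t),b(t))$ with $b$ constant after imposing the Dupont gauge $s(a(t)+b(t)\,dt)=0$; replacing $L$ by $L\otimes B$ gives the stack $U$ and its morphisms over $B$. Thus a morphism in $U(B)$ is precisely a gauge equivalence in the \Loo-sense, which I would match with an isomorphism of families via the exponentiation argument of \S\ref{proof main} (now with the higher brackets contributing the extra terms in $\frac{d\eta}{dt}+d_L\zeta+\sum\frac{1}{n!}[\eta,\ldots,\eta,\zeta]=0$).

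Essential surjectivity on objects is precisely the surjectivity half of Theorem \ref{1iso}: every perturbation in $\mathrm{Der}_{-}(R)\otimes m_B$ is gauge-equivalent to one coming from $L\otimes m_B$; fullness is the analogous argument for morphisms, constructing $b(t)$ by the same stepwise procedure used to construct $\phi$ in \S\ref{proof main}, but now ensuring that the Dupont gauge is imposed at each stage (this is where one must be careful that the smaller sub-simplicial scheme $\gamma_\bullet$ still captures all equivalences, which is precisely the content of Getzler's result). Faithfulness reduces to the lemma in \S\ref{invariance} that a trivial gauge action is generated by a central element, applied now to the \Loo-algebra $L\otimes B$.

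The main obstacle is the last step, namely promoting the pointwise (in $B$) bijections to a genuine equivalence of stacks. Concretely, one must verify that composition of morphisms in (a), which is literal composition of isomorphisms of dgca's, matches the Campbell--Hausdorff-type composition in $U\times_W U\to U$ coming from the \Loo-structure, and that this matching is natural in $B$. Here the non-triviality of the associator for the \Loo-gauge action means one cannot simply use the strict $\exp(L^0)$-action; instead one must invoke Getzler's associative composition on $\gamma_\bullet(L)$ and check compatibility with the iterated solutions of the generalized differential equation. Once this is done, naturality in $B$ is automatic from the functoriality of all constructions involved, and the theorem follows.
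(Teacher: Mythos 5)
Your plan assembles exactly the ingredients the paper itself relies on --- the identification of deformations over $B$ with Maurer--Cartan elements of $\mathrm{Der}_{-}(R)\otimes m_B$, the invariance of the Maurer--Cartan/moduli functor under the $L_\infty$ quasi-isomorphism $L\to \mathrm{Der}_{-}(R)$ (the degree-one equivalence theorem of the invariance section), and Getzler's Dupont-gauge groupoid $\gamma_\bullet(L\otimes B)$ \cite{getzler:nilp} for the morphisms --- and this is essentially the paper's own argument, which states the theorem as a summary of the preceding discussion without writing out a separate proof. The composition-compatibility issue you single out as the main obstacle is precisely the point the paper also defers, by citing Getzler's result that $\gamma_\bullet$ is the nerve of a groupoid and Schlessinger \cite{schlessinger} for the associative Campbell--Hausdorff law $U\times_W U\to U$, so your reconstruction is faithful to the intended argument rather than a genuinely different route.
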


\begin{remark} i) The stack U determines  the moduli functor as $\mathcal {M}=W/U = \pi_0(U)$.
For $W=Spec\  A$,  the equivalence relation $R$ on $W(B)= Hom (A,B)$ is also given by homotopy of maps $A(L)\to B$.

                ii) If \L\  is replaced by a quasi-isomorphic dgla $K$, then $U$ is replaced by the stack determined by the action of $G=exp(K^0)$ on
                the Maurer-Cartan scheme  
                $V = Spec\  H^0(A(K^+))$. Here we have a simpler action, but the actors $G$ and $V$ have no direct description in terms of the tangent cohomology $H(K)= L.$ 
                \end{remark}

\subsection{Summary}
Let $\HH$ be a simply connected graded commutative algebra of finite type, $X_\HH$ 
 the formal rational homotopy space with cohomology $ \HH, \gg$ the free dgla generated by $s(\HH^+)^*$ with differential dual to the multiplication in $\HH$ ,
 i.e.  the Quillen free dgla model of $\HH$ or $X_\HH$, and $R = A(\gg)$ the corresponding free dgca model of $\HH$ or $X_\HH$. 
 Let $Der _{-} (\gg)$ be the dgla of  weight  decreasing derivations of $\gg$ ( which models $Der_{-}(R)$ and $L\to  Der_{-}(\gg)$ 
 the minimal ($d_L = 0$) $L_\infty$-model of both $Der_{-}(\gg)$ and $Der_{-}(R)$.  
 Let $W = Spec\ A$, for $A = H^0 (A(L^+))$, be the Maurer-Cartan scheme of $L$ and $e \in MC(Der_{-}(R) \o A)$ (i.e.$ (d+e )^2  = 0$)
 be the corresponding classifying derivation. Assume $dim\ L^i = dim\ H^i(Der_{-})$ is finite for $i = 0,1,2$. 

\begin{theorem}   \label{summarythm} There is a ``miniversal deformation" $X\to W$ of $X_\HH$ with the following properties:

a) $W  = Spec\ A$ is a conical affine scheme and $X = Spec\ (R\o A,d+e)$ is  a dg scheme,
 
b) Each fiber of $X\to W$ is a rational homotopy space with cohomology $\HH$  and every such space occurs as a fiber, up to isomorphism ,

c) The fiber over the  vertex 0 in $W$  is $X_\HH$,

d) No fiber $X_p$ for   $p \neq  0$ is isomorphic to $X_\HH$,

e) $W$ is defined by $m = dim\  L^2$ weighted homogeneous polynomial equations, without linear terms, in $n = dim\  L^1$ variables,
 
f) The equivalence relation on $W$ governing the duplication of the $X_p$, or the passage from $W$ to moduli, is given by a conical affine scheme 
$U$ with two smooth projections $U \rightrightarrows W$; the fiber dimensions are $p = dim L^0$,
In fact $W$ determines  the objects, $U$ the morphisms  in a unipotent groupoid, or rather stack, $U$. The  composition of morphisms is determined by a Campbell-Hausdorff map $c: U \times_W U \to U $,

g) At each point $p \in W$, the family $X\to W$ splits, formally, into the product of the miniversal deformation of the fiber $X_p$ and a trivial family 
$X_\HH \x S\to  S$ where the smooth base $S$ is the orbit of $p \in W$ under $U$.
\end{theorem}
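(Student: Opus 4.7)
The plan is to take the pieces already assembled in sections \ref{Linfty}--\ref{miniversal} and check the seven properties in sequence. Part (a) is essentially built in: having chosen $L$ minimal ($d_L=0$) and having set $A = H^0(A(L^+))$, the scheme $W = \Spec A$ carries a tautological element $e \in MC(\mathrm{Der}_-(R)\,\hat\otimes\, A)$ obtained from the identity map $A\to A$ under the adjunction $\Hom(A,B) = MC(L\otimes m_B)$; the pair $(R\otimes A, d+e)$ is then a free dgca over $A$, so $X = \Spec(R\otimes A,d+e)$ is a dg scheme over $W$. Conicality of both $W$ and $X$ follows because the weight grading on $L$ and on $R$ makes both the $\infty$-MC equation and $e$ weight homogeneous, exactly as in Theorem~\ref{T:cone}; the $G_m$-action on $R$ coming from the weight grading extends to $X\to W$.

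For (b)--(d), I would invoke the Main Homotopy Theorem and Theorem~\ref{1iso}, together with the homology-invariance of $M_L$ under $L_\infty$-quasi-isomorphism. Each geometric point $p\in W(k)$ is an element of $MC(L)$, and the fiber $X_p = \Spec(R\otimes k, d+e(p))$ is exactly the perturbation of $R$ by that Maurer--Cartan element, which is a rational homotopy space with cohomology $\HH$ by construction; conversely every such space arises this way because $V_L/\text{(stack)} = M_L$ classifies all augmented homotopy types with cohomology $\HH$. The vertex $0\in W$ gives $e(0) = 0$, hence the fiber $X_\HH$, proving (c). For (d), since $L$ is minimal ($d_L = 0$), the $\infty$-action of $L^0$ on $0\in MC(L)$ is trivial (every term of $\sum \tfrac{1}{n!}[0,\ldots,0,\zeta]$ vanishes), so $0$ is a fixed point of the groupoid and no other point of $W$ is equivalent to it.

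For (e), the $\infty$-MC equation in a minimal $L_\infty$-algebra with $d_L=0$ reads
\[
  \tfrac12[x,x] + \tfrac{1}{3!}[x,x,x] + \cdots = 0,
\]
and since $H^i(L)$ is finite in degrees $i=1,2$, the equation cuts out a subscheme of $L^1 = k^n$, with $n=\dim L^1$, by $m = \dim L^2$ polynomial conditions. Absence of linear terms is immediate from $d_L=0$; weighted homogeneity is because each bracket $[\,\cdot\,]_k$ is weight preserving. Then (f) is where the work lies: I would follow Getzler's construction \cite{getzler:nilp} applied to $L\,\hat\otimes\, m_B$ for variable $B$. The Dupont-gauge sub-simplicial scheme $\gamma_\bullet(L\otimes m_B)$ is the nerve of a groupoid whose objects form $V_L(B)=W(B)$ and whose morphisms define a scheme $U(B) \rightrightarrows W(B)$; the source and target maps are smooth because the gauge condition kills the redundant $L^0[t]$-direction and leaves exactly $L^0$ as the relative tangent, while associativity of composition in the groupoid is encoded by the map $c\colon U\times_W U \to U$ coming from the horn filler $\gamma_2 \to \gamma_1\times_{\gamma_0}\gamma_1$, which is the Campbell--Hausdorff law. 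Unipotence of $U$ follows from nilpotence of $L^0\otimes m_B$. I expect this step to be the technical bottleneck; the proof is obtained by specializing Getzler's general result to our weighted, nilpotent situation and verifying that his simplicial maps respect the $G_m$-grading.

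Finally (g) is a formal-neighborhood statement at a point $p\in W$. Let $L_p$ denote the minimal $L_\infty$-model of the tangent algebra of $X_p$; then the versal deformation of $X_p$ is controlled by $L_p$, and a map $L\to L_p$ of $L_\infty$-algebras can be chosen splitting the natural surjection on $H^1$ (the tangent sequence $0\to T_p(U\cdot p) \to T_pW \to T_p^{\mathrm{fib}}\to 0$ is split because we work formally over $k$ of characteristic zero). By the smoothness criterion of Section~\ref{miniversal}, the induced comparison map identifies $\widehat{W}_p$ with the product of the miniversal base of $X_p$ and the formal neighborhood of $p$ in its $U$-orbit; the orbit is smooth because $U\to W$ is smooth. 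The same Schlessinger-type argument that proved ``a versal family is a miniversal family times a smooth trivial factor'' then applies pointwise to yield the splitting in (g).
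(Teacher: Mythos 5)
Your proposal is correct and follows essentially the same route as the paper, which gives no consolidated proof of this summary theorem but instead distributes the argument across the preceding subsections: the Maurer--Cartan scheme $W=Spec\ H^0(A(L^+))$ with its tautological $e$ for (a)--(c), the classification results of \S\ref{invariance} for (b), minimality ($d_L=0$) for (d)--(e), Getzler's Dupont-gauge groupoid for (f), and the Schlessinger-style versal-equals-miniversal-times-smooth splitting for (g). You have assembled exactly those ingredients in the same way, at the same level of detail as the source.
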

The miniversal family $m: X\to  W$ satisfies certain mapping properties. Any family $n:Y \to T$ of rational  homotopy spaces with cohomology $\HH$, deforming $X_\HH$, is isomorphic to the pullback of $m$ by a map of $T\to W$.  If $n$ is versal, i.e. contains all homotopy types as fibers, then $n$ splits, formally, into the product of $m$ and a trivial family over a smooth base, as in g). (This applies in particular to the MC family attached to an \Loo-algebra quasi-isomorphic to L).

Item e) says that the Zariski tangent space to $W$ at $0$ is $L^1$. This and versality uniquely determine $m$ up to isomorphism. The relative tangent space to $U$ over W is $L^0$, as $W$ is the MC scheme attached to $L$ and  $U$ is the MC scheme attached to $L' = L[ t, dt ]$, defined by the conditions 
$u=u_1(t) + u_0(t) dt$ with $u_i\in L^i[t]$ with $u_0$ constant.

If $L$ is a dgla ( third and higher order brackets vanish), then $W$ is a pure quadratic cone  and $U$ is the action of the unipotent group $exp(L^0)$ on $W$.

If $L^2 = 0$, then $W$ is smooth, i.e. $W$ is the weighted affine space $L^1$. More generally, $W$ is smooth when all brackets of $r$ elements in $L^1$ vanish for $ r > 0$.

If $L^1 = 0$, then $W = {0} = M$; $\HH$ is intrinsically formal.
 
If $L^0 = 0$, then $U =0$ and $M= W.$

The Campbell-Hausdorff map $c$ is determined by the brackets $[x_1,\cdots, x_r, u,v] \in L^0$ with $ x_i \in L^1$ and $u,v \in L^0$. If these vanish for
 $r >0$, then $U$ degenerates into the action of the group  $exp(L^0)$.

If $X \to S$ is formally versal and the tangent map is an isomorphism, the family is\emph{formally miniversal}. 
Such a family is then unique up to (formal) isomorphism.
 Under suitable nilpotence conditions, below, the adjective "formal" may be dropped.

\vfill\eject

\def\L{L_\HH}
\section {Examples and computations}
\label{examples}

Although some of our results are of independent theoretical interest, we are
concerned primarily with reducing the problem of classification to manageable
computational proportions.  One advantage of the miniversal variety is that it
allows us to read off easy consequences for the classification from conditions
on $H(L)$.

For the remainder of this section, let $(S Z,d)$ be the minimal model for
a gca $\HH$ of finite type and let $L_{\HH} \subset {Der}\, L^c (\HH)$ be the
corresponding \dgL of weight decreasing derivations, which is appropriate for
classifying homotopy type.  The following theorems follow from \ref{summarythm} and following remarks.  

\begin{thm}    If $H^1(L_{\HH}) = 0$, then $\HH$ is intrinsically formal,
i.e., no perturbation of $(S Z,d)$ has a different homotopy type; 
$M_{\HH}$ is a point.
\end{thm}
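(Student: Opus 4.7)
The plan is to prove this by an inductive weight argument, using completeness of $L_\HH$ with respect to the weight filtration. Given a perturbation $p \in L^1_\HH$ satisfying $dp + \tfrac{1}{2}[p,p] = 0$, I would decompose $p = p_1 + p_2 + \cdots$ where $p_k$ decreases weight by exactly $k$, and show by induction that $p$ can be gauge-transformed to $0$ by successive applications of $\exp(b_k)$ with $b_k \in L^0_\HH$.

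For the base of the induction, examining the weight $-1$ component of the Maurer--Cartan equation yields $dp_1 = 0$, so $p_1 \in Z^1(L_\HH)$. Since $H^1(L_\HH) = 0$ by hypothesis, there exists $b_1 \in L^0_\HH$ of weight $-1$ with $db_1 = p_1$, and the gauge action $p \mapsto \exp(b_1)(d+p) - d$ produces a new perturbation whose weight $-1$ component vanishes. For the inductive step, suppose the components of weight $-1,\ldots,-(k-1)$ have been killed. Then the weight $-k$ part of the MC equation reduces to $dp_k = 0$, so again by $H^1(L_\HH) = 0$ we find $b_k$ of weight $-k$ with $db_k = p_k$; applying $\exp(b_k)$ eliminates $p_k$ while, crucially, modifying only components of weight $\leq -(k+1)$ (because $b_k$ itself has weight $-k$, so every commutator with a surviving $p_j$, $j \geq k$, strictly decreases weight).

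The composite $\cdots \exp(b_k) \cdots \exp(b_2) \exp(b_1)$ converges in $\exp L^0_\HH$ thanks to weight-completeness of $L_\HH$, and conjugates $d+p$ to $d$. Thus every perturbation is equivalent to $0$, giving $M_\HH = \{\ast\}$.

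The main obstacle is bookkeeping: verifying that applying $\exp(b_k)$ preserves the vanishing of the previously killed weight components, and that the MC equation at weight $-k$ reduces to $dp_k = 0$ once lower weights have been cleared. Both facts are direct consequences of the weight grading: any bracket involving $b_k$ and an $p_j$ with $j \geq k$ lies in weight strictly lower than $-k$. Alternatively, one can deduce the result more conceptually from Theorem~\ref{summarythm}(f) applied to a minimal $L_\infty$-model $L'$ of $L_\HH$ with $L'^1 = H^1(L_\HH) = 0$, combined with the homotopy invariance of the moduli space under $L_\infty$-quasi-isomorphism (Theorem~\ref{1iso} and its $L_\infty$-extension from Section~\ref{Linfty}); but the direct weight induction is more transparent and avoids invoking the full miniversal machinery.
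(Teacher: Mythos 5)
Your argument is correct, and it takes a more elementary route than the paper does. The paper disposes of this theorem in one line, as a corollary of the miniversal-deformation machinery of \S\ref{alggeom}: passing to the minimal $L_\infty$-model $L'$ of $L_\HH$ (so that $L'^1=H^1(L_\HH)$), the Maurer--Cartan scheme $W$ lives inside $H^1(L_\HH)=0$, whence $W=\{0\}=M_\HH$; this is the closing remark ``If $L^1=0$, then $W=\{0\}=M$'' after Theorem \ref{summarythm}, which you correctly identify as the alternative (the relevant item is that remark rather than part (f)). Your primary argument instead inlines the underlying mechanism: a gauge-fixing induction over the weight filtration, killing $p_1,p_2,\dots$ one weight at a time. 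This is essentially the same technique the paper uses to prove the surjectivity half of Theorem \ref{1iso}; indeed, applying that theorem's surjectivity statement (which, as the paper notes, needs only $H^2$ mono and $H^1$ epi) to the inclusion of the zero algebra into $L_\HH$ gives yet another derivation. The trade-off is the expected one: your induction is self-contained and makes visible exactly where completeness of $L_\HH$ in the weight filtration enters (to assemble $\cdots\exp(b_2)\exp(b_1)$ into a single element of $\exp L^0_\HH$), whereas the paper's route is free once the general theory is built and yields the companion statements about $H^0(L_\HH)$ and $H^2(L_\HH)$ at the same time. Two points worth making explicit in a write-up: since $d$ preserves weight, $H^1(L_\HH)$ decomposes as a product over weights, so the hypothesis really does furnish a weight-homogeneous $b_k$ with $db_k=p_k$; and the identity $[b_k,d]=\pm db_k$ (the paper's $d_L=\mathrm{ad}(d)$) is what makes $\exp(b_k)(d+p)-d$ cancel $p_k$ exactly in weight $-k$ while disturbing only strictly lower weights.
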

\begin{thm}    If $H^0 (L_{\HH}) = 0$, then $M_{\HH}$ is the quotient of the
miniversal variety by ${Aut}\, \HH$.
\end{thm}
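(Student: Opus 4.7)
The plan is to deduce this from the structural description of $M_\HH$ as an $\mathrm{Aut}\,\HH$--quotient of the augmented moduli space, together with Theorem~\ref{summarythm} applied to the minimal $L_\infty$--model $L$ of $L_\HH$. Recall that the Corollary to the Main Theorem represents the set of rational homotopy types with cohomology $\HH$ as ${\mathrm{Aut}}\,\HH \backslash V/G$, where $V/G$ parametrizes \emph{augmented} homotopy types $(X,i\colon \HH\simeq H(X))$. By the homotopy invariance of $V_L/G$ under quasi-isomorphism (Theorem~\ref{1iso} and the discussion of \S\ref{miniversal}), we may replace $L_\HH$ by its minimal $L_\infty$--model $L$ with $d_L=0$, so that the augmented moduli space is identified with $W/U$, where $W=V_L$ is the miniversal Maurer--Cartan scheme and $U\rightrightarrows W$ is the groupoid/stack governing gauge equivalence.

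First I would observe that, by Theorem~\ref{summarythm}(f), both relative tangent spaces of the two smooth projections $U\rightrightarrows W$ are canonically identified with $H^0(L_\HH)=L^0$. Under the hypothesis $H^0(L_\HH)=0$, these relative tangent spaces vanish identically, so the smooth map $U\to W\times W$ has zero fibre dimension everywhere.

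Next I would upgrade this infinitesimal vanishing to an honest identification $U=W$ (i.e.\ the diagonal). Because $U$ arises from the $L_\infty$--action of $L^0$ via the nilpotent/complete Campbell--Hausdorff construction of \S\ref{miniversal} (or equivalently, in Getzler's model, as $\gamma_1(L)$, whose tangent along the identity section is $L^0$), the vanishing of $L^0$ forces every gauge $u=u_1(t)+u_0(t)\,dt \in L[t,dt]$ implementing an equivalence to have $u_0=0$ and $u_1(t)$ constant, so the endpoints coincide. Equivalently, on $B$--points the exponential formal-series action $p\mapsto (\exp b)(d+p)-d$ with $b\in L^0\otimes m_B=0$ is trivial. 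Thus $U\rightrightarrows W$ is just the identity groupoid and $W/U=W$.

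Combining these steps, the augmented moduli space coincides with the miniversal variety $W$ itself, and therefore
\[
M_\HH \;=\; \mathrm{Aut}\,\HH \backslash (V/G) \;=\; \mathrm{Aut}\,\HH \backslash W,
\]
as claimed. The main obstacle is the middle step: passing from the infinitesimal statement ``relative tangent spaces of $U\rightrightarrows W$ vanish'' to the global triviality of the groupoid. This is where one must invoke the completeness/nilpotence of the weight filtration on $L_\HH$ (and hence on the models used to build $U$), so that the Campbell--Hausdorff/$\infty$--exponential series converge and the vanishing of the Lie algebra $L^0$ really does force the (pro-unipotent) groupoid to be trivial; over a non-nilpotent base this inference would be false, but in our setting it follows from the constructions of \S\ref{miniversal}.
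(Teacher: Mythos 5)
Your proof is correct and takes essentially the same approach as the paper: the paper derives this statement directly from the summary theorem of the miniversal-deformation section, specifically the remark that when $L^0=H^0(L_{\HH})=0$ the gauge groupoid $U$ is trivial and hence $M=W$, so the space of unaugmented types is ${Aut}\,\HH\backslash W$. Your middle step, upgrading the vanishing of the relative tangent spaces of $U\rightrightarrows W$ to global triviality of the (unipotent, weight-complete) groupoid, is exactly the detail the paper leaves implicit.
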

\begin{thm}    If $H^2 (L_{\HH}) = 0$, then the miniversal variety is $H^1(L_{\HH})$.
\end{thm}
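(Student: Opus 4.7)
The plan is to exploit the $L_\infty$ minimal model of $L_\HH$ and the explicit form of the Maurer--Cartan equation defining the miniversal variety, as set up in Section \ref{alggeom}.

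First, I would replace $L_\HH$ by its minimal $L_\infty$-model $L'$, whose underlying graded vector space is $H(L_\HH)$ with zero differential and multibrackets $[\, \cdot,\dots,\cdot\,]$ transferred from $L_\HH$ (cf.\ Section \ref{Linfty}). By the construction of the miniversal deformation in \S\ref{miniversal} and Theorem \ref{summarythm}(e), the scheme $W$ is then cut out inside $L'^{\,1}=H^1(L_\HH)$ by the components of the $L_\infty$ Maurer--Cartan equation
\begin{equation*}
\frac{1}{2}[x,x]+\frac{1}{3!}[x,x,x]+\cdots=0,\qquad x\in H^1(L_\HH),
\end{equation*}
which is a finite sum by the finite-type/nilpotence assumption. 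The obstructions, namely the left-hand side, land by construction in $L'^{\,2}=H^2(L_\HH)$.

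Second, under the hypothesis $H^2(L_\HH)=0$, every multibracket $[x_1,\dots,x_k]$ with $x_i\in L'^{\,1}$ is an element of $L'^{\,2}=0$ and hence vanishes identically. Therefore the Maurer--Cartan equation is satisfied for every $x\in H^1(L_\HH)$, so the defining ideal of $W$ in the coordinate ring of $H^1(L_\HH)$ is zero, and $W=H^1(L_\HH)$ as a (weighted) affine scheme. Equivalently, in the formulation of Theorem \ref{summarythm}(e), the number $m=\dim L'^{\,2}$ of defining equations is zero, so $W$ is the full affine space on $n=\dim L'^{\,1}=\dim H^1(L_\HH)$ variables.

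The only substantive point that needs care is that the miniversal variety is indeed read off from the $L_\infty$ minimal model rather than from $L_\HH$ itself: that is precisely the content of the construction in \S\ref{miniversal} together with the homotopy invariance of $M_L$ under $L_\infty$ quasi-isomorphisms (cf.\ \ref{quism} and Theorem \ref{1iso}). Once this is in hand, the argument is immediate: no obstructions can occur, so the miniversal scheme coincides with its Zariski tangent space $H^1(L_\HH)$, as claimed.
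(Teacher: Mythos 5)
Your proposal is correct and follows essentially the same route as the paper: the theorem is stated there as a direct consequence of Theorem \ref{summarythm}(e) and the remarks following it, namely that the miniversal scheme $W$ is cut out of $L^1=H^1(L_\HH)$ of the minimal $L_\infty$-model by $m=\dim L^2=\dim H^2(L_\HH)$ weighted homogeneous Maurer--Cartan equations, so that $H^2(L_\HH)=0$ forces $W=H^1(L_\HH)$. Your added care about passing to the minimal model and invoking homotopy invariance is exactly the content of \S\ref{miniversal}, so nothing further is needed.
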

\begin{thm}    If $L_{\HH}$ is formal in degree 1 (in the sense of \ref{in degree 1}),
then $M_{\HH}$ is the quotient of a pure quadratic variety by the group of outer
automorphisms of $(S Z,d)$ (cf. \S \ref{miniversal}).
\end{thm}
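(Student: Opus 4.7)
The plan is to reduce the computation in three steps, leveraging the homotopy invariance results of \S\ref{invariance}.

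First, formality of $L_\HH$ in degree 1 (Definition \ref{indegree1}) supplies a \dgL map $\phi : L_\HH \to H(L_\HH)$ (possibly via a zigzag, with $H(L_\HH)$ carrying the zero differential) that is a homology equivalence in degree 1. Theorem \ref{1iso} then gives a bijection $M_{L_\HH} \cong M_{H(L_\HH)}$ of augmented moduli spaces. To invoke Theorem \ref{1iso} one must check the completeness and boundedness hypotheses: $L_\HH$ is complete and bounded above in each degree with respect to its weight filtration by construction, and $H(L_\HH)$ inherits both properties.

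Second, I would compute $V_{H(L_\HH)}$ directly. With zero differential, the Maurer--Cartan equation $dp + \frac{1}{2}[p,p] = 0$ for $p \in H^1(L_\HH)$ collapses to the homogeneous quadratic condition
\[
[p,p] = 0,
\]
whose zero locus is a pure quadratic subvariety of the finite-type vector space $H^1(L_\HH)$. The gauge action is that of $\exp L^0 = \exp H^0(L_\HH)$ on $H^1(L_\HH)$ through the adjoint representation of $H^0(L_\HH)$, and its quotient is precisely $M_{H(L_\HH)}$.

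Third, to descend from augmented to unaugmented homotopy types, I would incorporate the $\mathrm{Aut}\,\HH$ action as in the Corollary to the Main Theorem and in \S\ref{miniversal}. Any $g \in \mathrm{Aut}(SZ,d)$ acts on $V_{L_\HH}$ by conjugation $p \mapsto gpg^{-1}$; those of the form $\exp b$ with $b \in L^0$ are precisely the inner automorphisms, which are already factored out by the gauge equivalence. The quotient $\mathrm{Out}(SZ,d) = \mathrm{Aut}(SZ,d)/\exp L^0$ then fits into an extension with $\mathrm{Aut}\,\HH$ as quotient (compare the groupoid $G$ of \S\ref{miniversal}, an extension of $\mathrm{Aut}\,\HH$ by the unipotent piece), and its action on $V_{H(L_\HH)}/\exp H^0(L_\HH)$ is obtained by transporting the conjugation action along $\phi$. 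Modding out yields the quotient description of $M_\HH$ asserted.

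The main obstacle is the third step: ensuring that the conjugation action on $V_{L_\HH}$ transfers cleanly through the quasi-isomorphism $\phi$ to a well-defined action on the pure quadratic variety in $H^1(L_\HH)$, and that the identification of $\exp L^0$ with the inner automorphism group is exact at the level of orbits (so that no ``hidden'' equivalences remain unaccounted for). Formality in degree 1 is exactly the ingredient that guarantees the vanishing of the higher $L_\infty$-brackets on $H(L_\HH)$ relevant to the degree-1 Maurer--Cartan obstructions (cf.\ \S\ref{Linfty}), so the transport across $\phi$ respects both the Maurer--Cartan locus and the gauge orbits.
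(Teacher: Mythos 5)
Your proposal is correct and follows essentially the same route as the paper, which derives this theorem from the miniversal--deformation summary (Theorem \ref{summarythm}) and its surrounding remarks: formality in degree 1 lets one replace $L_{\HH}$ by $H(L_{\HH})$ with zero differential (your appeal to Theorem \ref{1iso} is the same invariance machinery underlying \S\ref{miniversal}), whereupon the Maurer--Cartan locus collapses to the pure quadratic cone $[p,p]=0$ and the gauge groupoid degenerates to the adjoint action of $\exp H^0(L_{\HH})$, with $\mathrm{Aut}\,\HH$ accounting for the rest of the outer automorphisms. The paper supplies no more detail on your third step than you do, so your identification of the combined quotient with the outer automorphism group of $(SZ,d)$ matches the intended argument.
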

The following examples give very simple ways in which these conditions arise.
Let ${Der}^n_k$ denotes derivations which raise top(ological) degree
by $n$ and decreases the weight = top degree plus resolution degree by $k$. 
 For ${Der}^n_k L^c (\HH)$, this specializes as follows:
${Der}\, L^c (\HH)$ can be identified with ${Hom}\, (L^c(\HH) ,\HH)$ and hence
with a subspace of ${Hom}\, (T^c (\HH),\HH)$ where $T^c(\HH)$ is the tensor
coalgebra.  Then each $\theta_k \in
{Der}^n_k$ corresponds to an element of ${Hom}\, (\HH^{\otimes
k+p+1},\HH)$ which lowers weight by $k$.  In particular, $\theta_k$ of top 
degree 1 and weight $-k$ can
be identified with an element of ${Hom}\, (\bar \HH^{\otimes k+2},\HH)$ 
which lowers the internal $\HH$-degree
by $k$ (e.g., $d=m : \HH \otimes \HH \to \HH$ preserves degree).  
Thus examples of the theorems above arise
because of gaps in ${Der}^n_k L^c (\HH)$ for $n=0, 1, $ or $2.$

\subsection{Shallow spaces}
\label{shallow}

By a {\bf shallow space}, we mean one whose cohomological dimension is a small
multiple of its connectivity.  


\begin{thm}    \cite{felix:Bull.Soc.Math.France80, NeiMil, halperin-stasheff}
  If $\HH^i = 0$ for $i<n>1$ and $i \ge 3n-1$, 
then $\HH$ is intrinsically formal, i.e., $M_\HH$ consists of one point.
\end{thm}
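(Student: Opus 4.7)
The plan is to invoke the first theorem of this section (vanishing of $H^1(L_\HH)$ implies intrinsic formality) and in fact establish the stronger statement that $L_\HH^1 = 0$.

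First, I would decompose $L_\HH^1$ by weight. Since $L_\HH$ consists of weight-decreasing derivations of $L^c(\HH)$, every $\theta \in L_\HH^1$ splits as $\theta = \sum_{k \geq 1}\theta_{-k}$ with $\theta_{-k}$ of weight $-k$. By the Hom-space description given in the paragraph immediately preceding the theorem, each $\theta_{-k}$ is determined by a linear map
$$f_k : \bar\HH^{\otimes (k+2)} \longrightarrow \HH$$
that lowers internal $\HH$-degree by $k$.

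Next I would run a one-line degree count. The hypothesis puts $\bar\HH$ in degrees $[n,\,3n-2]$, so $\bar\HH^{\otimes (k+2)}$ is concentrated in degrees $\geq n(k+2)$, and its image under $f_k$ lies in degrees $\geq n(k+2)-k$. For $f_k$ to be nonzero the image must meet $\HH$, hence lie in some degree $\leq 3n-2$, forcing
$$n(k+2) - k \leq 3n - 2,$$
which rearranges to $(n-1)k \leq n-2$, i.e.\ $k \leq 1 - \tfrac{1}{n-1}$. Since $n \geq 2$, this gives $k < 1$, contradicting $k \geq 1$. Hence every $f_k$ vanishes, so $L_\HH^1 = 0$ and in particular $H^1(L_\HH) = 0$. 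The first theorem of the section then yields $M_\HH = \{\text{pt}\}$, i.e.\ $\HH$ is intrinsically formal.

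The only substantive ingredient is the identification of the weight $-k$ piece of $L_\HH^1$ with maps $\bar\HH^{\otimes (k+2)} \to \HH$ of internal degree $-k$, and that is already established in the passage preceding the theorem; after that, the connectivity/dimension gap does all the work and there is essentially no obstacle. One small sanity check to carry out is that the decomposition $\theta = \sum \theta_{-k}$ is legitimate in $L_\HH$ (it is, by the completeness of $L_\HH$ with respect to the weight filtration noted in Section~\ref{models}), so that vanishing of each homogeneous piece really does imply vanishing of $\theta$.
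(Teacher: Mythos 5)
Your proof is correct and is essentially the paper's own argument: the paper likewise notes that $\bar\HH^{\otimes(k+2)}$ vanishes below degree $(k+2)n$, so the image of the weight $-k$ component $\theta_k$ lies in degrees $\geq (k+2)n-k \geq 3n-1$ where $\HH$ vanishes, forcing $L_\HH^1=0$ and hence $H^1(L_\HH)=0$. Your added remarks on the weight decomposition and completeness are sound but do not change the substance.
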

From our point of view or many others, this is trivial.  We have $\bar
\HH^{\otimes k+2}= 0$ up to degree $(k+2)n$, so that ${Image}\ \theta_k$ lies
in degree at least $(k+2)n-k$ where $\HH$ is zero for $k \ge 1$.
A simple example is $\HH = H(S^n \lor S^n \lor S^{2n+1} )$ for $n>2$.
\begin{thm}    If $\HH^i = 0$ for $i<n$ and $i \ge 4n-2$, then the space
of homotopy types $M_\HH$ is $H^1(L)/{Aut}\, \HH$ \cite{felix:Bull.Soc.Math.France80,felix:diss}.
\end{thm}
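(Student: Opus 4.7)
The plan is to derive the theorem from a sharp degree-count on the weight-decreasing derivations. Recall the identification recorded just before the preceding shallow-space theorem: $\theta \in \mathrm{Der}^t_k\, L^c(\HH)$ corresponds to a map $\bar\HH^{\otimes k+t+1} \to \HH$ of internal $\HH$-degree $-k$. Under the hypothesis $\HH^i = 0$ outside $[n, 4n-3]$ (with $n \geq 2$, since $\HH$ is simply connected), nonvanishing of such a map forces the smallest source degree $(k+t+1)n$ to be at most the largest admissible target $4n-3$ raised by $k$, giving the master inequality
$$k(n-1) \;\leq\; (3-t)n - 3.$$

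First I will extract two structural consequences. For $t = 2$ the inequality yields $k \leq (n-3)/(n-1) < 1$, while weight-decrease requires $k \geq 1$; hence $L_\HH^{\,2} = 0$ and in particular $H^2(L_\HH) = 0$. For $t = 1$ it yields $k \leq (2n-3)/(n-1) < 2$, so $L_\HH^{\,1}$ is concentrated in weight-decrease exactly one: $L_\HH^{\,1} = L_{\HH,1}^{\,1}$. Combining these two vanishings produces the key bracket identity
$$[L_{\HH,k}^{\,0},\, L_{\HH,1}^{\,1}] \;\subset\; L_{\HH,\,k+1}^{\,1} \;=\; 0 \quad\text{for every } k \geq 1,$$
and therefore $[L_\HH^{\,0}, L_\HH^{\,1}] = 0$.

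Because $L_\HH^{\,2} = 0$, the Maurer--Cartan equation $dp + \tfrac12[p,p] = 0$ is automatic on $L_\HH^{\,1}$, so by the theorem concerning $H^2(L_\HH) = 0$ recorded earlier in this section the miniversal variety is precisely $H^1(L_\HH) = L_\HH^{\,1}/d_L L_\HH^{\,0}$. To analyze the gauge action, I expand $(\exp b)(d+p) - d = [b,d+p] + \tfrac12[b,[b,d+p]] + \cdots$ for $b \in L_\HH^{\,0}$, $p \in L_\HH^{\,1}$. The first bracket equals $[b,d] + [b,p] = -d_L b$, because $[b,p] \in [L_\HH^{\,0}, L_\HH^{\,1}] = 0$; the next term $[b, -d_L b]$ vanishes for the same reason, and iteratively every higher term pits an element of $L^0$ against an element of $L^1$ and so dies. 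The series therefore terminates and the action collapses to the pure affine translation $p \mapsto p - d_L b$, whose orbit space is exactly $L_\HH^{\,1}/d_L L_\HH^{\,0} = H^1(L_\HH)$. Quotienting by the residual $\mathrm{Aut}\,\HH$-action from the Corollary to the Main Theorem delivers $M_\HH = H^1(L_\HH)/\mathrm{Aut}\,\HH$.

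The main obstacle I expect is bookkeeping at the degree-count boundary: the case $t = 1$, $k = 1$ is tight precisely when $n = 2$ (where $3n-1 = 4n-3$), so one has to check carefully that nothing sneaks in at the top degree and, correspondingly, that the bracket $[L^0, L^1] = 0$ does not fail on the nose for some graded-symmetry reason. Once that is pinned down, the collapse of the exponential and the identification of the moduli follow mechanically.
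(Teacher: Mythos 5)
Your proposal is correct and follows essentially the same route as the paper: the degree count forces $L_\HH^{2}=0$ and $L_\HH^{1}=L^{1}_{1}$, so $V=L^{1}$, and the exponential action collapses to $p\mapsto p-d_Lb$, giving $H^1(L_\HH)/\mathrm{Aut}\,\HH$. Your repackaging of the vanishing of the quadratic and mixed terms as $[L^0_j,L^1_1]\subset L^1_{j+1}=0$ via the weight grading is just a cleaner bookkeeping of the same dimension count the paper uses (images landing in degrees $\ge 4n-2$).
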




\begin{proof} Now $L_\HH^1 = L^1_1$, i.e., $\theta_1$ may be non--zero but 
$\theta_k= 0$ for $k \ge 2$.  Similarly $L_\HH^2 = 0$.  Thus, $W = V_{L_\HH} = Z^1 (L)$.  
Consider $\L$ and its action.  The brackets 
 have image in
dimension at least $3n-2$, thus in computing $( exp \, \phi)(d + \theta)$ the
terms quadratic in $\phi$ lie in $\HH^i$ for $i \ge 4n-2$ and hence are zero.
Thus, $(exp\, \phi)(d + \theta)$ reduces to $(1 + ad\, \phi)(d + \theta)$.  The
mixed terms $[\phi,\theta]$ again lie in dimension at least $4n-2$ and are also
zero, so that $(exp\, \phi)(d + \theta)$ is just $d + \theta + [\phi,d]$.
Therefore, $W_{L_\HH} / exp\, \  L_\HH$ is just $H^1 (L_\HH)$.
\end{proof}

Here a simple example is $\HH = H(S^2 \lor S^2 \lor S^5)$ \cite{halperin-stasheff}
\S 6.6.  Let the generators be $x_2 ,y_2 , z_5$.  We have $L^2 = 0$ for the same
dimensional reasons, so $W_L = V_L = H^1 (L)$ which is $\Q^2$.  Finally,
${Aut}\, \HH = GL(2) \times GL(1)$ acts on $H^1 (L)$ so as to give two
orbits:  $(0,0)$ and the rest.  The space $M_\HH$ is 
$$\bigodot\  \cdot$$
meaning the non-Hausdorff two-point space with one open point and
one closed.  For later use, we will also want to represent this
as $\ \ \ \cdot \to \cdot\ \ \  $, meaning one orbit is a limit point of the
other.

If $\HH^i=0$ for $i<n$ and $i \ge 5n-2$, then $W = V_L = Z^1(L)$ still, but now
the action of $L$ may be quadratic and much more subtle.  We will return to
this shortly, but first let us consider the problem of invariants for
distinguishing homotopy types.

\subsection{ Cell structures and Massey products} 
\label{cells}
 We have mentioned that
${Der}\ L(\HH)$ can be identified with ${Hom}\,(\HH^*,L(\HH))$.  This
permits an interpretation in terms of attaching maps which is particularly
simple in case the formal space is a wedge of spheres $X = \bigvee  
S^{n_i}$.  The rational homotopy groups $\pi_*(\Omega X) \otimes
\Q$ are then isomorphic to $L(H(X))$ \cite{hilton:spheres}.  In terms of the
obvious basis for $\HH$, the restriction of a perturbation $\theta$
to $\HH_{n_i}$ can be described as iterated Whitehead products
which are the attaching maps for the cells $e^{n_i}$ in the
perturbed space.

In more detail, here is what's going on:  attaching a cell by an ordinary
Whitehead product $[S^p,S^q]$ means the cell carries the product cohomology
class.  Massey (and Uehara) \cite{massey-uehara,massey:mex} introduced Massey 
products in order to
detect cells attached by iterated Whitehead products such as $[S^p,[S^q,S^r]]$.
If we identify a perturbation $\theta_k$ with a homomorphism $\theta_k :
H^{\otimes k+2} \to H$, this suggestion of a $(k+2)$--fold Massey product can be
made more precise as follows:  Consider the term $\theta$ of least weight $k$ in
the perturbation.  By induction, we assume all $j$-fold Massey products are
identically zero for $3 \leq j < k+2$.  Now a $(k+2)$--fold Massey product would
be defined on a certain subset $M_{k+2} \subset H^{\otimes k+2}$, namely, the
kernel of $\Sigma (-1)^j(1\otimes \dots \otimes m \otimes \dots 1)$ which is to
say
$$\quad x_0\otimes \dots \otimes x_{k+1} \in M_{k+2}\qquad \text{ iff }
\overset {k}{\underset {j-0}{\Sigma}}(-1)^jx_0 \otimes \dots \otimes
x_jx_{j+1}\otimes \dots \otimes x_{k+1} = 0.
$$
We can then define $\langle x_0,\dots ,x_{k+1}\rangle$ as the coset of $\theta
(x_0\otimes \dots \otimes x_{k+1})$ in $H \ \ \text{modulo}\ \hfill
x_0H+Hx_{n+1}$. Moreover, if $\theta = [d,\phi]$ for
some $\phi \in L$, then $\langle x_0,\dots ,x_{k+1}\rangle$
will be the zero coset because 
\begin{align*} 
\quad \theta (x_0 \otimes\dots \otimes x_{k+1}) =& x_0\phi(x_1\otimes \dots
\otimes x_{k+1})\\ 
&\pm \Sigma (-1)^j\phi (x_0\otimes \dots \otimes x_jx_{j+1} \otimes \dots \otimes_{k+1})\\ 
&\pm \phi (x_0\otimes \dots \otimes x_k)x_{k+1}, 
\end{align*} 
the latter sum being zero on $M_{k+2}$.  Notice $\phi$ makes precise \lq\lq
uniform vanishing\rq\rq.

The first example of \lq\lq continuous moduli\rq\rq, i.e., of a one--parameter
family of homotopy types, was mentioned to us by John Morgan 
(cf. \cite{neis,felix:Bull.Soc.Math.France80}). Let $\HH
= H(S^3 \lor S^3 \lor S^{12})$, so that the attaching map $\alpha$ is in
$\pi_{11}(S^3 \lor S^3) \otimes \Q$ which is of dimension 6,
 while ${Aut}\,\HH = GL(2) \times GL(1)$ is of dimension 5.  Alternatively, the space of 5--fold
Massey products  $\HH^{\otimes 5} \to \HH$ is of dimension 6 and so distinguishes at
least a 1--parameter family.

The Massey product interpretation is particularly helpful when only one term
$\theta_k$ is involved.  All of the examples of Halperin and Stasheff can be
rephrased significantly in this form.  To do so, we use the following: 

 {\bf Notation}: Fix a basis for $\HH$.  For $x$ in that basis 
and $y \in L(\HH)$, 
denote by $y \partial x$ the derivation which takes $x$ to $y$ (think of
$\partial x$ as $\partial / \partial x$) and sends the complement
of $x$ in the basis to zero.

\subsection{ Moderately shallow spaces}
\label{moderately}

Returning to the range of $\HH^i = 0, i < n \text{ and } i \ge 4n-2$, consider
Example 6.5 of \cite{halperin-stasheff}, i.e., $\HH = H((S^2 \lor S^2) \times S^3) $ 
with generators
$x_1,x_2,x_3$.  Again $L^1$ is all of weight $-1$; any $\theta _1$ is a linear
combination:
\begin{align*}
\mu_1[x_1,[x_1,x_2]]\partial x_1x_3 \, &+ \, \mu_2[x_1,[x_1,x_2]]\partial x_2x_3\\
&+\sigma_1[x_2,[x_1,x_2]]\partial x_1x_3+\sigma_2[x_2,[x_1,x_2]]\partial x_2x_3.
\end{align*}
As for $L$, it has basis $[x_1,x_j]\partial x_3$ for $1 \leq i \leq j \leq 2$.
Computing $d_L$, it is easy to see that $H^1(L)$ has basis:
$[x_1,[x_1,x_2]]\partial x_1x_3 = -[x_2,[x_1,x_2]]\partial x_2x_3$.  The
action of ${Aut}\, \HH$ again gives two orbits:  $\mu_1 = \sigma_2 \text{ and
} \mu_1 \not= \sigma_2 $.  In terms of the spaces, we have respectively 
$(S^2\lor S^2) \times S^3$ and $S^2 \lor S^3 \lor S^2\lor S^3  \cup e^5 \cup e^5$ where one $e^5$ is
attached by the usual Whitehead product  and the other $e^5$  is attached by the usual Whitehead product 
plus a non--zero iterated Whitehead product.

Notice the individual Massey products $\langle x_1,x_1,x_2\rangle$ and $\langle
x_1,x_2,x_2\rangle$ are all zero modulo indeterminacy (i.e., $x_1\HH^3 + \HH^3x_2$),
but the classification of homotopy types reflects the \lq\lq uniform\rq\rq\,
behavior of all Massey products.  For example, changing the choice of bounding
cochain for $x_1x_2$ changes $\langle x_1,x_1,x_2\rangle$ by $x_1x_3$ and
simultaneously changes $\langle x_1,x_2,x_2\rangle$ by $x_2x_3$, accounting for
the dichotomy between $\mu_1 = \sigma_2$ and $\mu_1 \not= \sigma_2$.  The
language of Massey products is thus suggestive but rather imprecise for the
classification we seek.

Our machinery reveals that the superficially similar 
$\HH = H((S^3 \lor S^3)\times S^5)$ behaves quite differently.  There is only one basic element in
$L^1$, namely $\phi = [x_1,x_2]]\partial x_5$, with again
$[d,\phi] = [x_1,[x_1,x_2]]\partial x_1x_5 + [x_2,[x_1,x_2]]\partial x_2x_5
, \ \text{ so } V_L/ exp \, L \spprox \Q^3$.  If we choose as basic
\begin{align*}
\qquad p &= [x_1,[x_1,x_2]]\partial x_2x_5 \\
\qquad q &= [x_2,[x_1,x_2]]\partial x_1x_5 \\
\qquad r &= 1/2[x_1,[x_1,x_2]]\partial x_1x_5 - 1/2[x_2,[x_1,x_2]]\partial
x_2x_5 \\
\end{align*}
then $GL(2,\Q) = {Aut}\ \HH^3$ acts by the representation $sym\ 2$, the
second symmetric power, that is, as on the
space of quadratic forms in two variables.  Since  ${Aut}\,\HH^5 = \Q^*$
further identifies any form with its non--zero multiples, the rank and
discriminant (with values in $\Q^*/(\Q^*)2)$ are a complete set of invariants.
Thus there are countably many objects parameterized by $\{0\} \cup \Q/(\Q^*)^2$;
in more detail, we have $\Q^*/(\Q^*)^2 \to 0 \to 0$, meaning one zero is a limit
point of the other which is a limit point of each of the other points (orbits)
in $\Q^*/(\Q^*)2$.  Schematically we have

\begin{align*}
\searrow\quad&\downarrow \quad \quad\swarrow \\
\quad\quad\longrightarrow \ &\  \cdot\ \longleftarrow  \\
&\downarrow \\
\end{align*}

This can be seen most clearly by using ${Aut} H$ to choose a
representative of an orbit to have form 
\begin{align*}
 x_2 + dt_2, d \not= 0 \ &\text{(rank 2)} \quad \text{or}\\
 x_2 \ &\text{(rank 1)} \quad \text{or}\\ 
0 \ &\text{ (rank 0).} 
\end{align*}  
\subsection{ More moderately shallow spaces}
\label{more moderately}

Now consider $H^i = 0$ for $i < n$ and $i \ge 5n-2$.  We find $V_L = Z^1(L)$,
but there may be a non--trivial action of $L$.  Of course for this to happen,
we must have $H^i \not= 0$ for at least three values of $i$, e.g., $H(X)$ for
$X= S^3 \lor S^3 \lor S^5 \lor S^{10}$.  Spaces with this cohomology are of the
 form $S^3 \lor S^3 \lor S^5 \cup e^{10}$.  We have $V_L = Z^1(L) = L_1$ with
basis
\begin{align*}
[x_i,[x_j,x_5]]\partial x_{10}\ \qquad &\text{for}\ L_1^1,\\
[x_i,[x_j,[x_1,x_2]]]\partial x_{10}\ \qquad &\text{with}\ i \ge j \ \text{for}\
L_2^1.
\end{align*}
Thus $L_1^1$ corresponds to the space of bilinear forms (Massey products) on
$H^3$:
$$
\langle \ ,\ ,x_5 \rangle : H^3 \otimes H^3 \to H^{10} = \Q
$$
and thus decomposes into symmetric and antisymmetric parts.  On the other hand,
$L$ has basis $[x_1,x_2]\partial x_5$ and acts nontrivially on $L_1^1$ except
for the antisymmetric part spanned by
$$
[x_1,[x_2,x_5]]\partial x_{10}  - [x_2,[x_1,x_5]]\partial x_{10}  =
[[x_1,x_2],x_5]\partial x_{10}.
$$
(The $ exp\, \ L$ action corresponds to a one--parameter family of maps of the
bouquet to itself which are the identity in cohomology but map $S^5$
nontrivially into $S^3 \lor S^3$.)

Now $L_1^1$ is isomorphic over $SL(2,\Q) = {Aut}\, H^3$ to the space of
symmetric bilinear forms on $H^3$.  If we represent $L^1 = L_2^1 \otimes L_1^1$
as triples $(u,v,w)$ with $u,w$ symmetric and $v$ anti--symmetric, then
 $ exp \, L$ maps $(u,v,w)$ to $(u,v,tu+w)$.  We have ${Aut}\ H^5 = \Q^*$ and
${Aut}\ H^{10} = \Q^*$ acting independently on $L^1$.  If we look at the
open set in $L^1$ where $v \not= 0$, we find the discriminant of $u$ is a
modulus.  (In fact, even over the complex numbers, it is a nontrivial invariant
on the quotient which can be represented as the $SL(2,{\Bbb C})$--quotient of
$\{(u,\tilde w)\vert u \in \text{sym}^2, \tilde w \in P^2({\Bbb C}),
\text{discriminant}\, (\tilde w) = 0\}$.)  The rational decomposition of the degenerate
orbits proceeds as before.

%
%



\subsection{ Obstructions}
\label{obstructions}

We now  turn to examples in which  we do not
have $\theta^2_1 = 0$ automatically and $W_L$ may very well not be an
affine space.
One of the simplest examples of this phenomenon is
 $X = S^3 \lor S^3 \lor S^8\lor S^{13}$ (we must have $\mathcal{H}^i \neq 0$ for at least three values of $i$); 
 cf. \cite{felix:Bull.Soc.Math.France80,felix:diss}.  In particular, if 
$$
\theta_1 = [x_1,[x_1,x_2]]\partial y + [x_2,[x_1,y]]\partial z,
$$
then $[\theta_1,\theta_1] $ does not represent the zero class in $H^2(L) = L^2$.
Thus, $d + \theta_1$ can $not$ be extended to $ d + \theta_1 +
\theta_2 $ so that $(d + \theta_1 +\theta_2)^2 = 0$.  We refer to
$\theta_1$ as an {\bf obstructed} pre--perturbation or {\bf
obstructed infinitesimal} perturbation.

In terms of cells, this means we cannot attach both $e^8$ to realize $\langle
x_1,x_1,x_2 \rangle $ {\bf and} attach $e^{13}$ to realize $\langle x_2,x_1,x_8
\rangle $.

 The computations are essentially the same as those we now present for
  the miniversal scheme for homotopy types with cohomology 
  $$\mathcal{H} =H ( S^k \vee \cdots \vee S^k \vee S^{3k-1} \vee S^{5k -2})$$
 where  $k>1$ is an integer and there are $ r > 1\   k-$spheres. This is the simplest type of example with obstructed deformations of the formal homotopy type with cohomology \H; that is, the miniversal scheme $W$ is singular. In fact, $ W_{red}$ is a ``fan", the union of two linear varieties $A$ and $B$ meeting in the origin, corresponding to the two `quanta' making up $L^1.$  However, the scheme $W$ is  not reduced: If  $a_i$  and $b_j$  are coordinates in $A$ and  $B$, then each product $a_ib_j$  is nilpotent  modulo the Maurer-Cartan  ideal $I$, but few are in $I$.
 
                Let $x_1,\dots, x_r$  of degree $1- k$  be a basis of  the suspended dual  of  $\mathcal{H}^ k$ and let $y$ and $z$ play similar roles  for the other cohomology in \H.  The free graded Lie algebra  generated  by $x_i , y , z$ with $d=0$ is thus  the Quillen  model  $Q $ for \H\ and the sub-Lie algebra  of $Der\  Q$  consisting  of  negatively weighted derivations is the controlling Lie algebra $L$ for   augmented homotopy types with cohomology \H.  The subspace $L^1$ is the direct sum of  subspaces  $A$ and  $B$ where  $A$ has a Hall basis consisting   of  the derivations 
                $[x_i  ,[ x_j ,  y ]]\del z$
                (of weight -1) and the derivations  
                $[x_k , [ x_l ,  x_m ]] \del y , k \geq  l < m$  (of weight -1) form a basis for $B$.



                Thus $dim\ A = r^2$  and $ dim\ B = 2 \binom{r+1}{3}$ if  $k$ is odd . 
 From now on, for convenience, we assume $k$ is \emph{odd}.
              
                              The bracket of the indicated  basis  elements is $[x_i,[x_j,[x_k, [x_l,x_m]]]]\del z \in L^2.$
                 If these are expressed as linear combinations of a basis of $L^2$,   the transposed linear combinations exhibit generators for the Maurer-Cartan ideal $I$. 
                 

Let us analyze the situation in more detail.  We have:
$$
L^1 = L^1_1\ \text{with Hall basis}\ 
\begin{cases} \alpha_{ij} = [x_i,[x_j,y]]\partial z.\\
 \beta_{klm}=[x_k,[x_l,x_m]]\partial y
 \end{cases}
$$
with $k\geq l<m$.
All brackets in $[L^1,L^1]$ 
are zero except
$$
[\alpha_{ij},\beta_{klm}] = [x_i,[x_j,[x_k,[x_l,x_m]]]]\del z.
$$


\def\Gijklm{\gamma_{ijklm}}
\def\Gij|klm{\gamma_{ijklm}}
\noindent which we will denote $\gamma_{ijklm}.$
It is not hard to see that the bracket map from the tensor product of $A$ and $B$ to $L^2$ is surjective, expressing things in terms of a Hall basis.
The dimension of the space of quadratic generators $I_2$ of $I$ is the same  as $ dim\  L^2= dim\ F_5(r) \backsim r^5/5$. 
($F_5(r)$ denotes the space of fifth order brackets in the free Lie algebra on $r$ variables.)

 The word $\gamma_{ijklm}$ in Hall form is a \emph{simple} word in $L^2$ , as opposed to the \emph{compound} word  $[[x_i,x_j] , [x_k,[x_l,x_m]]] \del z $, which
 we denote
 $\gamma_{ij|klm}.$ 
 A basis of $L^2$ is given by $\gamma_{ijklm}$ together with $\gamma_{ij|klm}$ for $i\geq j, k\geq l < m$.
 

 
The duals of these Hall basis elements we will  denote $a_{ij}, b_{klm},  c_{ijklm}$ and $c_{ij|klm}$ and will also use the products  $a_{ij}b_{klm},$

 Finally, each of these symbols   has 
 a \emph{content} $s = (s_1,s_2, \dots ,s_r )$, where $s_i$ is the number of times $i$ occurs in the symbol  and has a \emph{partition}
  $ p  = (p_1 , p_2 \dots, p_r)$ obtained by rearranging the $s_i$ in descending order. Both these sequences 
add under bracketing or multiplying. We use a descending induction on the lexicographic order of the partition $p$ attached to $c_{ijklm}$ to prove that the latter  is nilpotent.
  A key point is that the square of a product $a_{ij}b_{klm}$ is divisible by a product of higher partition mod $I$.

     To illustrate these notions, we examine the situation when $r = 2$. We begin by expressing the brackets $[ \alpha_{ij}, \beta_{klm}] = \gamma_{ijklm}$ in terms of the Hall basis in
  $L^2$, using the Jacobi relation repeatedly. 
  
  
   In the following table for content (3,2), the columns are headed by  Hall basis elements of  $L^2$; reading down  yields a partial  basis for $I_2$ , consisting of quadrics 
which generate the Maurer-Cartan ideal $I$. Further tables for other content  (2,3), (4,1) and (1,4) yield a full set of generators for  the Maurer-Cartan ideal $I$ by the same procedure.

   We organize the tables according to content .
   
	 \begin{center}  
   		\begin{tabular}{| c | c | c |}
   \hline
(3,2)        &   	  21112            &           $12|112$\\ \hline                   
12112   &   	   1        &        1    \\ \hline   
21112         &          1         &   0 \\ \hline                                             
11212       &       1         &           1  \\ \hline 
		\end{tabular} 		
	\end{center}


 
 In the above table, the rows give the expression of $[\alpha_{ij},\beta_{klm}]$             in terms of the Hall basis of $L^2$.
 The columns then give the expansion of the dual basis elements as linear combinations of the products $a_{ij}b_{klm}.$
 Thus the first row says that $[\alpha_{12},\beta_{112}] = \gamma_{2112} + \gamma_{12|112}$, etc.
 By reading down the columns, the first column says that the dual $\gamma_{2112}^*$ is expressed as the quadric

 \begin{itemize}
 \item $(1) \quad a_{12}b_{112}+a_{21}b_{112} + a_{11}b_{212} = s_{12}b_{112}+ a_{11}b_{212}$
 \noindent where $s_{12}:= a_{12}+a_{21}$.
 \end{itemize}
           The second column gives a quadric
           \begin{itemize}
           \item $(2) \quad a_{12}b_{112}+ a_{11}b_{212}.$
           \end{itemize}
\noindent Similarly, for the other contents $(2,3), (4,1), (1,4),$ we get quadrics
\begin{itemize}
\item $(3)\quad s_{12}b_{212}+a_{22}b_{112}$
\item $(4) \quad a_{12}b_{212}$
\item $(5) \quad a_{11}b_{112}$
\item $(6) \quad a_{22}b_{212}.$
\end{itemize}

These six quadrics generate the Maurer Cartan Ideal $I$. Multiplying (2) by $a_{11}$, we get
$$a_{11}a_{11}b_{212} \equiv -a_{12} a_{11}b_{112} \quad \text{mod}\  I$$
$$\hskip8ex \equiv -a_{12} \cdot 0 \quad\text{mod}\  I.$$  
\noindent Then $(a_{11}b_{212})^2 \equiv 0,$ so $a_{11}b_{212}$ is nilpotent mod $I$.

Similarly $a_{22}b_{112}$ and $a_{21}b_{212}$ have square 0 mod $I$.

The other 3 quadrics, (4), (5), (6), are in $I$. (In addition to the above equations of the form $a^2b\equiv 0$ mod $I$, one also has 
a system of the form $ab^2\equiv 0$ mod $I$.)

We note that the above procedure shows each product $a^2b$ to be divisible by a product of higher partition  (4) or  (5),  This is essentially the induction step.

      To set up the induction step in the general setting, we outline what can be determined about the form of the quadrics in $I_2$. 
      To each Hall word $w$ of order 5 in the free Lie algebra on $x_1,\dots, x_r$, there is associated a  quadric $q_w$ , a linear combination  of the basic quadrics $a_{ij}b_{klm }$;
      the $q_w$ form a basis of $I_2$. What are the restrictions on the word $u = ijklm,\  k \geq l <m$ such  that $a_{ij}b_{klm }$  appear in $q_w$ with 
      non-zero coefficient ? 
      
        First, the content of $w$ and $u$ must be the same. It is not hard 
 to see that it suffices to work under the assumption that this content is $(1,1,1,1,1,0,0,\dots,0)$, so that  $w$ and $u$ are permutations  of 12345.
 (For, if not, we take a suitable order preserving function $f$ and transform  $q_w$  into $q_{f(w)}$.) 
 There are 4 simple Hall words, 20 compound Hall words  and 40 words $u = ijklm$ as above, 
 so one gets a $40 \times 24$ matrix for which it is straightforward to fill in the rows with entries 0,1, or -1 
 for the Hall decomposition of $u.$ 
 Only the top half  ($i<j$) matters; the bottom is the same except for the subtraction  of a 
$ 20\times 20$ identity matrix. Here are the essential features, which generalize readily to other situations.

      (1)  Suppose that $w$ is a simple Hall word, $w= pqrst $ with $p\geq  q\geq r\geq s<t$. Then the product $u=ijklm\   (k\geq l<m)$ appears in $q_w$ exactly when the contents $c(w) = c(u)$ and  $l =t$ or $m=t$. The same applies to $a_{ij}b_{klm}$. Thus $q_w$ is a sum of terms $s_{ij}b_{klm}$
      (where $s_{ij} = a_{ij} +a_{ji}$).
      
      (2)  Suppose instead that $w$ is a compound Hall word, $w= pq|rst $ with $p<  q, r\geq s<t$. Then the summands of $q_w$ are $a_{pq}b_{rst},
      s_{pr}v_{qst},   s_{rs}v_{pqt}$. Here $v_{xyz}$ stands for some linear combination of the two 3-letter Hall words spelled with $x,y,z$; e.g. $v_{123}$
      is a linear combination of $b_{213}$ and $b_{312}$.
      
      \begin{lemma} {\bf Induction} Let $a=a_{ij}$ and $a'=a_{i'j'}$ be coordinates in $A$ with different content so that $\{i,j\} \neq \{i',j'\} $, and
      $b$ and $b'$  are coordinates in $B$. Suppose $ab$ and $a'b'$ have the same content. Then one of $a'b$ or $ab'$ has higher partition than $ab$ does.
      \end{lemma}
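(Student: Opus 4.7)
The plan is to recast the claim as a statement about integer vectors. Set $\delta := c(a') - c(a)$. The hypothesis $c(a) \neq c(a')$ gives $\delta \neq 0$, while $|c(a)| = |c(a')| = 2$ gives $\sum_i \delta_i = 0$. Because $ab$ and $a'b'$ have identical total content, we also have $\delta = c(b) - c(b')$, so $c(a'b) = c(ab) + \delta$ and $c(ab') = c(ab) - \delta$; both remain coordinate-wise non-negative because they are the contents of genuine basis monomials. Writing $v := c(ab)$, the lemma therefore reduces to the following purely combinatorial statement: \emph{if $v$ is a non-negative integer vector and $\delta$ a non-zero integer vector with $\sum_i \delta_i = 0$ and $v \pm \delta \geq 0$, then at least one of the partitions of $v+\delta$ or $v-\delta$ strictly exceeds the partition of $v$ in lexicographic order.}

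I would prove this combinatorial statement by induction on the length $r$ of $v$. Sort $v$ so that $v_1 \geq \cdots \geq v_r$ and let $J = \{i : v_i = v_1\}$ denote the top bin. The inequalities $\max(v+\delta) < v_1$ and $\max(v-\delta) < v_1$ cannot both hold, since for any $i \in J$ they would force $\delta_i < 0$ and $\delta_i > 0$ respectively; hence at least one of $\max(v+\delta) \geq v_1$ or $\max(v-\delta) \geq v_1$. If strictly, say $\max(v+\delta) > v_1$, then the first entry of $\mathrm{part}(v+\delta)$ already beats the first entry of $\mathrm{part}(v)$ and we are done. Otherwise both maxima equal $v_1$, which yields $|\delta_i| \leq v_1 - v_i$ for every $i$ and in particular $\delta_i = 0$ throughout $J$. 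If some $i \notin J$ has $\delta_i = v_1 - v_i \geq 1$, the top bin of $v+\delta$ strictly enlarges $J$, so $\mathrm{part}(v+\delta)$ carries at least $|J|+1$ copies of $v_1$ while $\mathrm{part}(v)$ carries only $|J|$, giving the lex increase at position $|J|+1$; the symmetric situation $\delta_i = -(v_1 - v_i)$ handles $v-\delta$.

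In the remaining subcase the inequalities $|\delta_i| < v_1 - v_i$ are strict for every $i \notin J$, so both $v+\delta$ and $v-\delta$ have top bin exactly $J$. Then the first $|J|$ entries of $\mathrm{part}(v+\delta)$, $\mathrm{part}(v-\delta)$ and $\mathrm{part}(v)$ all equal $v_1$, and lex comparison reduces to the sorted restrictions to $J^c$. On $J^c$ the restricted perturbation $\delta|_{J^c}$ is still non-zero (since $\delta \neq 0$ and $\delta|_J = 0$), still zero-sum, and keeps $v|_{J^c} \pm \delta|_{J^c}$ non-negative; the induction hypothesis applied to the shorter vector $v|_{J^c}$ produces the desired strict lex increase on the restrictions, which lifts unchanged to the original partitions. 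The recursion must terminate in one of the direct cases because the base case of empty $J^c$ would force $\delta = 0$.

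The main obstacle is the bookkeeping in the marginal subcase where both maxima equal $v_1$ but neither $v \pm \delta$ acquires a new top-bin element: one must check carefully that the lex comparison of the full partitions is genuinely controlled by the reduced problem on $J^c$. This follows once one observes that the strict inequalities $|\delta_i| < v_1 - v_i$ keep every entry outside $J$ strictly below $v_1$ after perturbation, so the positions $|J|+1, \ldots, r$ of the sorted $v \pm \delta$ are exactly the sorted values of $(v \pm \delta)|_{J^c}$.
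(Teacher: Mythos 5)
Your proof is correct, but it takes a genuinely different route from the paper's. The paper's argument (obscured by typos in the printed version) is a two-step shortcut: first reorder the indices $1,\dots,r$ once and for all so that $c(ab)$ is weakly decreasing, which forces $p(ab)=c(ab)$; then, with $\delta:=c(a')-c(a)\neq 0$, exactly one of $\pm\delta$ is lexicographically positive, say $\delta$, so that $c(a'b)=c(ab)+\delta>c(ab)=p(ab)$ lexicographically, and the general inequality $p(w)\geq c(w)$ (the descending rearrangement of a vector is its lexicographically largest rearrangement) yields $p(a'b)\geq c(a'b)>p(ab)$; the case where $-\delta$ is lex-positive is handled symmetrically by $ab'$. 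You instead prove the unnormalized combinatorial statement -- for $v\geq 0$ and $\delta\neq 0$ with $v\pm\delta\geq 0$, at least one of the partitions of $v\pm\delta$ lexicographically exceeds that of $v$ -- by induction on the length, peeling off the top bin $J$. Your case analysis is sound: the maxima of $v\pm\delta$ cannot both drop below $v_1$, a strict increase of either maximum or an enlargement of the top bin gives the lex increase outright, and in the remaining case $\delta$ vanishes on $J$ so the problem restricts faithfully to $J^c$ with a still-nonzero $\delta$. What the paper's normalization buys is brevity, reducing everything to the observation that sorting can only increase lex order; what your argument buys is independence from any preferred ordering of the indices, a cleaner isolation of exactly which hypotheses matter (your proof never actually uses that $\delta$ has zero sum), and a statement that applies verbatim to the more general bouquets discussed later in the section.
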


      \begin{proof} By reordering the indices if necessary, we may assume that the partition of $ab$ equals its content. Suppose $c(a') > c(a).$ Then
      $c(a'b).=c(ab) = p(ab)$ so $p(a'b) \geq c(a'b')$ and hence $p(a'b) \geq c(a'b) > c(ab)=p(ab)$ as desired. The case $c(a')< c(a).$ is similar.
      \end{proof}

\begin{theorem}  Let $I$ be the Maurer Cartan Ideal for deformations of the homotopy type of  $X = (S^k)^{\lor r} \lor S^{3k-1} \lor S^{5k-2},\  k\ \text{odd}, r > 1.$ 
Then $L^1 = H^1(L) = A\oplus B,$ where $A$ and $B$ consist of
 unobstructed deformations and have coordinates $a_{ij}$  and $b_{klm}  ,  1\leq i,j,k,l,m \leq r, k\geq l<m$.  
 Further, each product $a_{ij}b_{klm}$ is nilpotent modulo  $I$.
\end{theorem}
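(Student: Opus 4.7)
The plan is to prove the two assertions of the theorem separately. First, the claim $L^1 = H^1(L) = A \oplus B$ with $A$ and $B$ unobstructed: this reduces to two observations. On the chain level, the description of $L^1_1$ given just before the theorem is already a Hall decomposition, and since $d$ preserves weight and $L^0$ is concentrated in different weight than $L^1_1$, no coboundaries contribute, so $H^1(L) = L^1_1 = A \oplus B$. For unobstructedness, I would verify directly from the bracket $[\alpha_{ij},\beta_{klm}] = \gamma_{ijklm}$ (and the vanishing of all other brackets of Hall basis elements in $L^1$) that both $A$ and $B$ are abelian sub-Lie algebras. Hence for $p \in A$ or $p \in B$, the Maurer-Cartan equation $\tfrac12[p,p]=0$ is automatic, and the corresponding families of perturbations are unobstructed.

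The substantive work is proving nilpotence of each product $a_{ij}b_{klm}$ modulo $I$. The plan is a descending induction on the partition $p(a_{ij}b_{klm}) = p(ij) + p(klm)$, ordered lexicographically. The base case consists of products whose content is maximally concentrated in one index, for example $a_{ii}b_{iij}$: by inspecting the structure of the simple quadric $q_w$ attached to the Hall word $w = iiiij$, one finds that $a_{ii}b_{iij}$ appears in $q_w$ as the only monomial with that content, so $a_{ii}b_{iij} \in I$ outright and is $a$ fortiori nilpotent. (This is exactly the phenomenon seen in quadrics (5) and (6) of the worked $r=2$ case.)

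For the inductive step, I use the structure of $q_w$ recorded in items (1) and (2) before the Induction Lemma: for a simple Hall word $w$, $q_w$ is a sum of terms $s_{ij}b_{klm}$ with prescribed index compatibility, while for a compound Hall word $w = pq|rst$ the quadric has the form $a_{pq}b_{rst} + s_{pr}v_{qst} + s_{rs}v_{pqt}$. Given a product $ab = a_{ij}b_{klm}$ not yet known to be nilpotent, I multiply a suitably chosen quadric by $a_{ij}$ (or by $b_{klm}$) to obtain a cubic congruence of the form
\[
a_{ij}^2 b_{klm} \equiv \sum \pm a_{i'j'}\,a_{ij}\,b_{klm} \pmod{I},
\]
where each $a_{i'j'}$ on the right has content strictly larger than $a_{ij}$ on the $A$-side (and equal total content). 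By the Induction Lemma, each $a_{i'j'}b_{klm}$ has partition strictly higher than $p(ab)$, so by induction each $a_{i'j'}b_{klm}$ is nilpotent mod $I$; multiplying by $a_{ij}$ preserves this, so $a_{ij}^2 b_{klm}$ is nilpotent, and consequently $(a_{ij}b_{klm})^N = a_{ij}^N b_{klm}^N$ (using that $k$ is odd, so the $a$'s and $b$'s commute in the graded sense) is in $I$ for sufficiently large $N$. A symmetric argument multiplying by $b_{klm}$ covers the cases where the induction on the $B$-side is forced.

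The main obstacle will be the compound-Hall-word case: the auxiliary terms $s_{pr}v_{qst}$ and $s_{rs}v_{pqt}$ in $q_w$ mix the indices in a way that the Induction Lemma as stated (which compares two monomials $ab$ and $a'b'$ of the same total content) does not immediately accommodate. I expect to handle this by a secondary induction on the $B$-partition after fixing the $A$-partition, using the compound quadrics to trade a monomial with given $B$-content for one of higher $B$-partition at the cost of introducing an $A$-factor of strictly higher content; the Induction Lemma then applies. Once both the simple and compound quadrics are available to drive partition upward on whichever side is needed, the descending induction terminates at the base monomials $a_{ii}b_{iij}$ (and their symmetric analogues) already known to lie in $I$, yielding nilpotence of every $a_{ij}b_{klm}$ mod $I$.
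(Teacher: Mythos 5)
Your opening paragraph (the identification $L^1=H^1(L)=A\oplus B$ — immediate here since $d=0$ on the Quillen model, hence on $L$ — and unobstructedness of $A$ and $B$ from $[A,A]=[B,B]=0$) is fine, and your base case of the descending induction on the partition is the paper's. The gap is in the engine of the inductive step. The congruence you propose,
\[
a_{ij}^2\,b_{klm} \equiv \sum \pm\, a_{i'j'}\,a_{ij}\,b_{klm} \pmod{I},
\]
with every $a_{i'j'}$ of strictly larger content, is not what multiplying a quadric $q_w$ by $a_{ij}$ yields: the other terms of $q_w$ are $a_{i'j'}b_{k'l'm'}$ with varying $b'$ (only the \emph{total} content is fixed), and $c(a_{i'j'})$ can be larger or smaller than $c(a_{ij})$. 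More seriously, the Induction Lemma does not let you choose which of $a'b$ or $ab'$ has higher partition: its proof shows $a'b$ wins when $c(a')>c(a)$ and $ab'$ wins when $c(a')<c(a)$. Multiplying by the single variable $a_{ij}$ turns a cross term $a_{i'j'}b_{k'l'm'}$ into $a_{ij}a_{i'j'}b_{k'l'm'}$, which has $a_{ij}b_{k'l'm'}$ as a factor but \emph{not} $a_{i'j'}b_{klm}$; so whenever $c(a_{i'j'})>c(a_{ij})$ the lemma hands you the factor you do not possess and the argument stalls. (Your model computation, the $r=2$ case, happens to land in the favorable alternative.) The paper's device is to symmetrize and square: with $q=s_{ij}b_{klm}$, $s_{ij}=a_{ij}+a_{ji}$, and a simple Hall word $w$ chosen so that $q_w=q+\sum s'b'$ with $c(s')\neq c(s_{ij})$ for every other term, one gets $q^2\equiv-\sum qq' \pmod I$, and each monomial of $qq'=ss'bb'$ is $aa'bb'=(a'b)(ab')$, containing \emph{both} candidate factors — so whichever one the Induction Lemma promotes to higher partition is nilpotent by the inductive hypothesis and kills the monomial.

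Your plan for the unsymmetrized products and the compound Hall words is also off track. The compound quadric $q_{ij|klm}=a_{ij}b_{klm}+s_{pr}v_{qst}+s_{rs}v_{pqt}$ is not an obstacle requiring a secondary induction; it is the step that finishes the argument, expressing the unsymmetrized product $a_{ij}b_{klm}$ ($i<j$) as minus a sum of \emph{symmetrized} products of the \emph{same} partition, which the previous step has just shown nilpotent (the case $i=j$ being trivial since $s_{ii}b=2a_{ii}b$). As written, your trade of ``higher $B$-partition at the cost of higher $A$-content'' has no termination argument and does not interface with the Induction Lemma, which compares whole monomials $ab$, $a'b'$ of equal total content. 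Finally, the commutativity of the coordinates $a_{ij},b_{klm}$ has nothing to do with $k$ being odd: they live in degree $0$ of $H^0(A(L^+))$ and commute for that reason; the parity of $k$ only enters in determining the Hall basis of $L^1$ and $L^2$.
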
 

\begin{corollary} The miniversal variety $W_{red}$ for deformations of the homotopy type of $X$ decomposes  as   
 $W_{red}=A\lor B$ with an isolated singularity at the origin.
\end{corollary}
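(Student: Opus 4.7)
The plan is to deduce the decomposition directly from the two assertions of the preceding Theorem: the unobstructedness of $A$ and $B$ separately, and the nilpotence (modulo $I$) of every product $a_{ij}b_{klm}$.

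First, I would establish the inclusion $A\cup B\subseteq W_{red}$. Since every coordinate $a_{ij}\in A$ defines an unobstructed infinitesimal deformation, the corresponding element of $L^1$ satisfies the full Maurer--Cartan equation, so the linear subspace $A\subset L^1=H^1(L)$ lies inside the scheme $W=\operatorname{Spec}(\Q[A\oplus B]/I)$. In our setting this is manifest because brackets within $A$ (or within $B$) are zero, so the non-trivial obstruction can only come from the mixed brackets $[\alpha_{ij},\beta_{klm}]=\gamma_{ijklm}$. Hence both linear subspaces, and therefore their union, are contained in $W_{red}$.

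For the reverse inclusion, I would invoke the nilpotence statement from the Theorem: every product $a_{ij}b_{klm}$ lies in the radical $\sqrt{I}$. Thus in the reduced coordinate ring $\Q[A\oplus B]/\sqrt{I}$, all these products vanish, so set-theoretically
\[
W_{red}\subseteq V(\{a_{ij}b_{klm}\}_{i,j,k,l,m})\subseteq A\oplus B.
\]
The vanishing locus of all such products is precisely $A\cup B$: if a point has some $a_{ij}\neq 0$ and some $b_{klm}\neq 0$ simultaneously, then $a_{ij}b_{klm}\neq 0$. Combined with the first inclusion, this yields the set-theoretic equality $W_{red}=A\cup B=A\vee B$ (where the wedge indicates that the two affine spaces are joined only at the origin, since $A\cap B=\{0\}$ inside $A\oplus B$).

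Finally, for the statement that the singularity is isolated, I would note that at any point $p\neq 0$ of $W_{red}$, either all $b_{klm}$ vanish and some $a_{ij}\neq 0$, or vice versa; a Zariski open neighborhood of $p$ in $W_{red}$ is then an open subset of the affine space $A$ (respectively $B$), hence smooth. Only at the origin do the two linear branches $A$ and $B$ meet, producing the singularity. I expect the main (only) subtlety to be the first step: namely, verifying cleanly that $A$ and $B$ sit inside the \emph{scheme} $W$ and not merely inside $L^1$; this follows because all iterated brackets of elements purely within $A$ (or purely within $B$) vanish in $L^2$, so the Maurer--Cartan equation $\tfrac12[x,x]=0$ is automatic there.
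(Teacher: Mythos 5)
Your deduction is correct and is exactly how the paper intends the corollary to follow from the preceding theorem: the vanishing of all brackets within $A$ and within $B$ gives $A\cup B\subseteq W_{red}$, while the nilpotence of each $a_{ij}b_{klm}$ modulo $I$ forces $W_{red}$ into the locus $\{a_{ij}b_{klm}=0\}=A\cup B$, and smoothness away from $0$ is clear since each branch is linear. The paper offers no separate argument for the corollary (its displayed proof is of the theorem), so your write-up simply makes the intended one-line deduction explicit.
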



\begin{proof} To prove the theorem, we use descending induction on the partition $p(ab)$. 
When $p(ab)= (4,1,0,\dots,0)$ is as high as possible,
then $ab=a_{ii}b_{iij}$  or $a_{ii}b_{iji}=q_w$ (where  $w= iiiij$ or $w=iiiji$) is in $I$. 

Suppose the theorem true for all products $ab$ of partition $> p$ and consider $ab$ of partition $p$ and fixed content $c$.  Give $q=sb$ where, for example, $s=s_{ij} = a_{ij} + a_{ji}$, is symmetric,
then  there is a simple Hall word $w$
such that $q_w$ contains $q$ while for every other quadric $q=s'b'$ appearing in $w$ we have $c(s)\neq c(s')$.  By the Induction Lemma, $qq'$ is divisible by a quadric of higher partition; thus $qq'$ is nilpotent mod $I$. Since the square $q^2$ is minus the sum of such products mod $I$, we see 
$q^2$, and hence $q=sb,$ is nilpotent mod $I$.

To prove that a product $q=ab$ is nilpotent, we note that $sb=2ab$ when $a=a_{ii}$ is nilpotent,
so $ab$ is nilpontent mod $I$ in this case. Thus we may assume $a=a_{ij}$ with $i<j$ and $b=b_{klm}$.  Take $w$ to be the compound word
$w=ij|klm.$  By (2) above, $q_w=ab +$ terms of the form $s'b'$.  The latter are nilpotent mod $I$ so $ab$ is also.
\end{proof}

Now consider the moduli space associated to $X$. Since $L^1$ has weight $-1$ and $L^0$ has negative weight,
the action of the latter on the former is trivial. There remains to consider the action of $Aut\ \HH$ on $W$ 
where here $Aut\ \HH =GL(r)\times G_m \times G_m$, where $G_m=GL(1)$ . The number of continuous moduli for the action on $B$
 is $\binom{s+1}{2}$ when $r=2s$ is even.  We now analyze the entire moduli space $A/Aut\ \HH$ when $r=2.$

Let $V$ be the span of $x_1, x_2$ so that $B\cong V\otimes V$
 is a 4-plane on which $GL(2)$ acts as on bilinear forms on $V$ and the action of $G_m$ is by scalar multiplication. Taking normal forms for each point in the moduli space and letting $a\rightarrow b$ indicate that $b$ is in the closure of $a$, we have:

\begin{align*}
(x_1 &+ dx_2,1)\\r
\swarrow\quad&\downarrow \quad \quad\searrow \\
\quad (x_1 + dx_2,0)\to &(x_2,0) \leftarrow  (x_2,1)\\
&\downarrow \\
(0&,0)
\end{align*}

In $(x_1+d x_2,1)$, the continuous modulus is denoted by $d$, but in $(x_1+d x_2,0),$ it is determined only mod $G_m^2$.
i.e. the forms $(x_2 + dy_2,0)$ must be identified modulo squares: $d \sim \lambda^2d$.

The bilinear form may be recovered from $$<\quad ,\quad,\ H^{3k-1}>: H^k\otimes H^k\to  H^{5k-2}.$$

As for $B$, the order 3 part of the free Lie algebra on the $x_i$, the full analysis in terms of Geometric Invariant Theory
is unknown past $r=2$, where it is almost trivial: $\cdot\rightarrow\cdot$.

The obstructions can be interpreted in a very straightforward way, but with a
perhaps surprising result.  Any perturbation of $\HH$ corresponds to a rational
space of the form $S^k \lor S^k \cup e^{3k-1} \cup e^{13}$.  The obstructions tell us
that the deformations are either $S^k \lor S^k \lor S^{3k-1}  \cup e^{5k-2}$ or
$S^k \lor S^k \lor S^{5k-2}  \cup e^{3K-1}.$

For the extreme case $k=2$ of the previous example (i.e. $X= \lor^r S^2\lor S^5\lor S^8$),
we have a non-trivial action of $L^0$ on $L^1.$ Besides the previous subspaces 
of $L^1$, namely $A=\{x^3\partial y\}, B=\{x^2y\partial z\}$ of weight $-1$, 
we have an additional subspace $C=\{x^6\partial z\}$ of weight $-4$. 
For $L^0$, there are two subspaces $D=\{x^4\partial y\}$ and $E=\{x^3y\partial z\}$ of weight $-3$ and
$F=\{x^7\partial z\}$ of weight $-6$. We find that $W_{red}=C\times (A\lor B)$ with 
$[B,D] = C = [A,E]$  with all other brackets between $L^0$ and $L^1$ being 0.

\subsection{More complicated obstructions}

Here we present the simplest obstruction for a bouquet such that $W$ has an \emph{non-linear} irreducible component $V.$
In fact, $V$ will have equations of the form
$$u_pv_q - u_qv_p,\quad 1\leq p,q\leq c$$

\noindent
(and some matrix generalization of this). 
Thus $V$ is the cone over a ``Segre" manifold ${\mathbold P}^{c-1} \times {\mathbold P}^1$
(or generalization thereof).

The obstructions arise in the setting $L^1=A\oplus B$ again, where $A$ consists of
derivations of the form $\alpha = x^3y\partial z$ and $B$   consists of derivations of the form 
$\beta=x^3\partial y$. These are realized by a bouquet of spheres with $\HH = \HH^0+\HH^k+\HH^{3k-1}+\HH^{6k-2}$
where $x,y,z$ run though a basis  of the shifted dual of $\HH^+$. We outline the construction.

We decompose $A$ according to Hall type,  $i,j,k$ being the indices of the $x_i$s :
\begin{itemize}
\item $A_1: ijky\partial z, \quad i\geq j\geq k,$

\item $A_2: (ij)(ky)\partial z, \quad i < j,$

\item $A_3: yijk\partial z, \quad i\geq j < k.$
\end{itemize}

We also have 
$B: ijk\partial y,  \quad i\geq j < k.$  We set $c= dim\ A_3 = dim\ B$.

Let $p$ denote the simple word $ijk$ and $q$ denote $lmn$ and take
$$\alpha:= \sum_{p=ijk} a_p[y,p]\partial z \in A_3$$
\noindent
with $k\geq l<m$
and
$$\beta:=\sum b_qq\partial y\in B,$$
then $[\alpha,\beta] = \sum_{p<q} (a_pb_q-a_qb_p)[p,q]\partial z$.

As the $[p,q]\partial z$ are linearly independent in $L^2$, we conclude that the scheme ${\mathbold P}^{c-1} \times {\mathbold P}^1$
defined by the equations $a _pb_q-a_qb_p=0$ lies in $W_{red}.$
Proceeding as before, we find that  $W_{red}=V\cup L$ where $L$ is a linear variety meeting $V$ in another linear variety.
This result has some interesting generalizations if $dim\ \HH^{3k-1} = s>1,$ while $\HH^k$ and $\HH^{6k-2}$ have dimensions $r$ and $1$ as before.
We let $c$ denote the dimension of the the space of simple words $ijk,\ i\geq j <k$ in $r$ variables and proceed as above. We obtain a variety 
$V\subset W$ consisting of pairs of matrices $M,N$ such that $M$ is a $c\times s $ matrix, $N$ is an $ s\times c$ matrix and
$MN$ is symmetric.
$GL(s)$ acts on $V$ and the quotient is the variety of $c\times c$ matrices with $rank \leq s$.
We conjecture that $V$ is a component of $W$.



On the other hand, the obstructions above can be avoided by adding $S^{10}$ to
$S^3 \lor S^3 \lor S^8$ and then attaching $e^{13}$ so as to realize
$x_2x_{10}$.  Then the class of $[\theta_1,\theta_1]$ is zero in $H^2(L)$,
namely, $[\theta_1,\theta_1] = [d,\theta_2]$ for
$$
\theta_2 = [x_1,[x_1,[x_1,x_2]]]\partial x_{10}.
$$

\subsection{Other computations}

Clearly, further results demand computational perseverance and/or
machine implementation by symbolic manipulation and/or attention to spaces
of intrinsic interest. 

Tanr\' e \cite{tanre:stunted} has studied 
stunted infinite dimensional complex projective spaces 
${\Bbb C}P^{\infty}_n =
{\Bbb C}P^{\infty}/ {\Bbb C}P^n$.  Initial work on machine
implementation has been carried out by Umble  and has led, 
for example, to the classification of rational homotopy types 
$X$ having $H^*(X) = H^*({\Bbb C}P^n\vee {\Bbb C}P^{n+k})$ for $k$ 
in a range \cite{umble:CPwedge}.  At the next level of
complexity, he and Lupton \cite{lupton-umble} have classified rational homotopy
types $X$ having $H^*(X) = H^*({\Bbb C}P^n/ {\Bbb C}P^k)$ for all $n$ and $k$:

For further results, both computational and theoretical,
consult the extensive bibliography created by F\' elix 
building on an earlier one by Bartik.

\newpage
\section {Classification of rational fibre spaces}
\label{fibrations}

The construction of a rational homopy model for a classifying
space for fibrations with given fibre was sketched briefly by
Sullivan \cite {sullivan:inf}.  Our treatment, in which we pay particular
attention to the notion of equivalence of fibrations, is parallel
to our classification of homotopy types.  Indeed, the natural
generalization of the classification by path components of $C(L)$
provides a classification in terms of homotopy classes of maps
$[C,C(L)]$ of a dgc coalgebra into $C(L)$ of an
appropriate \dgL $L$.  However, the comparison with the topology
is more subtle; the appropriate $C(L)$ has terms in positive and negative degrees, because $L$ does,
unlike the chains on a space.

Because of the convenience of Sullivan's algebra models of a space and because
of the applications to classical algebra, we present this section largely in
terms of dgca's and in particular use $A(L)$ rather than $C(L)$ to classify. The
price of course is the need to keep track of finiteness conditions. 

Tanr\'e carried out the classification in terms of Quillen models \cite{tanre:modeles} (Corollaire VII.4. (4)).
with slightly more restrictive hypotheses in terms of connectivity.


\subsection{Algebraic model of a fibration}
The algebraic model of a fibration is a twisted tensor product.  For motivation,
consider topological fibrations, i.e., maps of spaces
$$
F \to E \overset{p}{ \rightarrow} B
$$
such that $p^{-1}(*) = F$ and $p$ satisfies the homotopy lifting property.  We
have not only the corresponding maps of dgca's
$$
A^*(B) \to A^*(E) \to A^*(F)
$$
but $A^*(E)$ is an $A^*(B)$--algebra and, assuming
 $A^*(B) \text{and} A^*(F)$ of finite type, there is an
 $A^*(B)$--derivation $D$ on $A^*(B) \otimes A^*(F)$ and an
 equivalence 
\begin{align*}
& A^*(E)\\
\nearrow \qquad &\qquad \qquad\searrow\\
A^*(B) \qquad &\qquad \qquad A^*(F)\\ 
\searrow \qquad &\qquad \qquad \nearrow\\
(A^*(B) &\otimes A^*(F),D)\,.
\end{align*}

To put  this it our algebraic setting, let $F$ and $B$ be dgca's
 (concentrated in non--negative degrees) with $B$ {\bf
 augmented}.

\begin{defn} 
A sequence
$$
B \to E \to F
$$
of dgca's such that $F$ is isomorphic to the quotient $E/\bar BE$ (where $\bar B$
is the kernel of the augmentation $B \to \Q$) is an  {\bf F fibration over B} if it is equivalent to 
one which as graded vector spaces is of the form
$$
B \overset{i}{\longrightarrow} B \otimes F \overset{p}{\longrightarrow} F
$$
with $i$ being the inclusion $b \to b \otimes 1$ and $p$ the projection induced
by the augmentation.  

.  Two such fibrations
$B \to E_i \to F$
are {\bf strongly equivalent} if there is a commutative diagram
$$
\begin{matrix}
&B\ \rightarrow &E_1\rightarrow &F\\
&id\downarrow &\downarrow &\downarrow id\\
&B\ \rightarrow &E_2\rightarrow &F
\end{matrix}
$$
(It follows by a Serre spectral sequence argument that $H(E_1) \cong H(E_2)$.)
\end{defn}

Both the algebra structure and the differential
may be twisted, but if we \emph{ assume
that $F$ is free as a cga}, then it follows that $E$ is
strongly equivalent to
$$
B \overset{i}{\longrightarrow} B \otimes F \overset{p}{\longrightarrow} F
$$
with the $\otimes$--algebra structure.  The differential in $E = B \otimes F$
then has the form
$$
d_\otimes + \tau,
$$
where $$
d_\otimes = d_B\otimes + 1\otimes d_F.
$$

The \lq\lq twisting term\rq\rq\, $\tau$ lies in ${Der}^1(F,\bar B\otimes F)$,
 the set of derivations of $F$ into
the $F$-module $\bar B \otimes F$.  This is the sub-\dgL of
${Der}(B\otimes F)$
 consisting of those derivations of $B\otimes F$
which vanish on $B$ and reduce to $0$ on $F$ via the
augmentation. 

Assuming $B$ is connected,
$\tau$ does not increase the $F$--degree so we regard $\tau$ as a
perturbation of $d_{\otimes}$ on $B \otimes F$ with respect to the
filtration by $F$ degree. The twisting term must satisfy the
integrability conditions:
	\begin{equation}\label{integrability}
 (d + \tau)^2 = 0\ \text{ or }\ [d,\tau] + \frac{1}{2}[\tau,\tau] = 0.  
\end{equation}
To obtain strong equivalence classes of fibrations, we must now factor out the
action of automorphisms $\theta\ \text{of}\ B \otimes F$ which are the identity
on $B$ and reduce to the identity on $F$ via augmentation.  Assuming $B$ is
connected, then $\theta - 1$ must take $F\ \text{to}\ \bar B \otimes F$ and
therefore lowers $F$ degree, so that $\phi = \text{log}\,\theta = \text{log}\,
(1+\theta -1)$ exists; thus $\theta =  exp \,(\phi)$ for $\phi$ in
${Der}^0(F,B \otimes F)$.  If we set $L = L(B,F) = {Der}\, (F,\bar B
\otimes F)$, then for $B$ connected, we may apply the considerations of \S \ref{invariance} to
the \dgL $L$, because the action of $L$ on $L^1$ is complete in the filtration
induced by $F$--degree.  The variety $V_L = \{\tau \in L^1 \vert (d+\tau)^2 =
0\}$ is defined with an action of $ exp \, L$ as before.  
\begin{thm}    For $B$ connected, $F$ free of finite type and $L =
{Der}\, (F,\bar B \otimes F)$, there is a one--to--one correspondence
between the points of the quotient  $M_L = V_L/ exp\, L$ 
and the strong equivalence classes of $F$ fibrations over $B$.
\end{thm}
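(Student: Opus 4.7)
The plan is to establish a bijection
$$\Phi:\ V_L/\exp L^0 \longrightarrow \{\text{strong equivalence classes of $F$-fibrations over $B$}\}$$
by sending the orbit of $\tau\in V_L$ to the class of $B\to (B\otimes F,\, d_\otimes+\tau)\to F$, and then checking (i) well--definedness on orbits, (ii) surjectivity, and (iii) injectivity. The assignment itself makes sense because $\tau\in V_L$ is exactly the condition $(d_\otimes+\tau)^2=0$, equivalently the Maurer--Cartan equation $[d,\tau]+\tfrac12[\tau,\tau]=0$ in $L$, which is what one needs for $B\otimes F$ equipped with the perturbed differential to be a dgca lying over $B$ with fibre $F$.

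For well--definedness on orbits, I would observe that if $\tau'=(\exp\phi)(d+\tau)-d$ with $\phi\in L^0$, then $\theta:=\exp(\phi)$ converges to a genuine graded algebra automorphism of $B\otimes F$, because $\phi$ takes values in $\bar B\otimes F$ and is therefore strictly filtration--decreasing with respect to $F$--degree, on which $L$ is complete. By construction $\theta$ is the identity on $B$ and reduces to $\mathrm{id}_F$ modulo $\bar B\otimes F$, and the action formula produces $\theta\circ(d+\tau)=(d+\tau')\circ\theta$, giving a strong equivalence.

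Surjectivity follows from the discussion immediately preceding the statement: any $F$-fibration $B\to E\to F$ with $F$ free as a gca of finite type is strongly equivalent to one whose underlying graded algebra is $B\otimes F$ with differential $d_\otimes+\tau$ for some $\tau\in\mathrm{Der}^1(F,\bar B\otimes F)=L^1$, and closedness of the differential forces $\tau\in V_L$. For injectivity, given a strong equivalence $\theta:(B\otimes F,d+\tau_1)\to(B\otimes F,d+\tau_2)$, the hypotheses $\theta|_B=\mathrm{id}_B$ and $\theta\equiv \mathrm{id}_F$ mod $\bar B$, together with connectedness of $B$, force $(\theta-\mathrm{id})(F)\subset\bar B\otimes F$. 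Hence $\theta-\mathrm{id}$ is filtration decreasing, $\phi:=\log\theta$ converges in the complete filtered $L$ and lies in $L^0$, and the intertwining condition rearranges to $\tau_2=(\exp\phi)(d+\tau_1)-d$, placing $\tau_1,\tau_2$ in the same $\exp L^0$-orbit.

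The main obstacle is the surjectivity step, i.e., rectifying an abstract $F$-fibration $E$ to the tensor product form. This requires choosing graded algebra generators of $F$, lifting them to $E$ along the surjection $E\to F$ (possible precisely because $F$ is free as a gca), and extending multiplicatively to a graded algebra isomorphism $E\cong B\otimes F$ over $B$ and under $F$; one must then check that any two such choices differ by an automorphism of $B\otimes F$ of the type already accounted for in $\exp L^0$. Once this rectification is in hand, the rest of the argument is a direct reprise of the proof of the Main Homotopy Theorem, now carried out with the $F$-degree filtration in place of the weight filtration, and the convergence estimates for the exponential/logarithm series there translate without change.
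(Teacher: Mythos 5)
Your proposal is correct and follows essentially the same route as the paper, which presents this theorem as a summary of the discussion immediately preceding it: rectification of $E$ to the tensor-product form $B\otimes F$ using freeness of $F$, identification of the differential with $d_\otimes+\tau$ for $\tau$ satisfying the Maurer--Cartan equation, and identification of strong equivalences with $\exp\phi$ for $\phi\in L^0$, where connectedness of $B$ guarantees that $\theta-\mathrm{id}$ lowers $F$-degree so that $\log\theta$ converges in the complete filtered Lie algebra. Your three-part bijection write-up, including the reprise of the Main Homotopy Theorem convergence arguments with the $F$-degree filtration replacing the weight filtration, is exactly the intended argument.
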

Notice that if $F$ is of finite type, we may identify ${Der}\, (F,F \otimes
\bar B)$ with ${(Der}\ F) \hat \otimes \bar B$, i.e.,
$$
{Der}^k(F,F \otimes \bar B) \spprox\underset {n}{\Pi} {Der}^{k-n}(F)
\otimes \bar B^n.
$$
\begin{crl} If $H^1(L) = \Pi H^{1-n}({Der}\, F) \otimes
H^n(B)(n>0)$ is $0$, then every fibration is trivial.  If $H^2(L) = 0$, then
every \lq\lq infinitesimal fibration\rq\rq\, $[\tau] \in H^1(L)$ comes from an
actual fibration (i.e., there is $\tau^\prime \in [\tau]$ satisfying integrability, i.e. the Maurer-Cartan equation).
\end{crl}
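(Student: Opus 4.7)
The plan is to derive both parts as direct corollaries of the preceding theorem identifying $M_L = V_L/\exp L^0$ with strong equivalence classes of $F$-fibrations over $B$, together with the order-by-order obstruction arguments already developed in Section~\ref{invariance}. Both claims are the fibration analogues of Theorems 7.1 and 7.3, and the proofs transfer with only notational changes, the filtration by $F$-degree on $L = {Der}(F,\bar B\otimes F)$ playing the role formerly played by the weight filtration.

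For the first claim I would show that $H^1(L)=0$ forces every $\tau\in V_L$ to be gauge-equivalent to $0$, which represents the trivial fibration $B\otimes F$. Since $B$ is connected, $\bar B$ raises the $F$-degree filtration, so $L$ is complete in this filtration and we may expand any $\tau\in V_L$ as $\tau = \tau_m + \tau_{m+1}+\cdots$ with $\tau_m\neq 0$. The Maurer--Cartan equation $d\tau + \tfrac12[\tau,\tau]=0$ implies, at leading order, $d\tau_m = 0$, so $\tau_m$ is a cocycle in $L^1$. The hypothesis yields $\phi_m\in L^0$ of the same filtration with $d\phi_m = \tau_m$, and the filtration-preserving action $\tau\mapsto (\exp(-\phi_m))(d+\tau)-d$ kills $\tau_m$ while producing a new perturbation of strictly higher filtration. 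Iterating and using completeness exactly as in the proof of the Main Homotopy Theorem in \S\ref{proof main}, the infinite product of $\exp(-\phi_k)$'s converges and gauges $\tau$ to $0$.

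For the second claim I would construct an honest Maurer--Cartan element $\tau = \tau_1 + \tau_2+\cdots$ lifting a prescribed cocycle $\tau_1$ by successive approximation. Suppose inductively $\tau_1,\dots,\tau_{k-1}$ have been chosen so that $(d+\tau)^2\equiv 0$ modulo filtration $k+1$, where $\tau = \sum_{j<k}\tau_j$. The equation at order $k$ requires
\begin{equation*}
d\tau_k \;=\; -\tfrac12\!\!\sum_{\substack{i+j=k\\ i,j\ge 1}}[\tau_i,\tau_j] \;=:\; -\omega_k .
\end{equation*}
A standard Jacobi-identity computation (differentiating the previous-order equation) shows $d\omega_k = 0$, so $[\omega_k]\in H^2(L)$ is the obstruction to continuing. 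Since $H^2(L)=0$ by hypothesis, $\omega_k$ is a coboundary and a suitable $\tau_k\in L^1$ exists. Completeness of $L$ in the $F$-degree filtration again guarantees that the formal sum $\tau = \sum\tau_k$ defines an element of $L^1$, and by construction $[\tau] = [\tau_1]$ in $H^1(L)$ and $(d+\tau)^2 = 0$.

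The main obstacle is not conceptual but bookkeeping: verifying that the naive filtration arguments go through requires checking that the sub-dgLa $L = {Der}(F,\bar B\otimes F)$ is genuinely complete in $F$-degree (which follows from $B$ connected, so $\bar B$ has strictly positive filtration), and that the action of $L^0$ preserves this completion. Once these formalities are in place, both statements are immediate specializations of the general principles established in \S\ref{invariance} and \S\ref{alggeom}.
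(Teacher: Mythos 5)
Your overall strategy --- reducing both statements to an order-by-order obstruction calculus in the filtered dg Lie algebra $L = \mathrm{Der}(F,\bar B\otimes F)$ --- is the classical one and is essentially the technique underlying \S\ref{invariance}; the paper itself obtains the corollary by specializing that machinery rather than re-running it (for the first statement, apply the surjectivity half of the invariance theorem to the map $0\to L$, which is onto on $H^1$ and mono on $H^2$ precisely when $H^1(L)=0$, so $M_L$ is a point; for the second, pass to the minimal $L_\infty$-model of $L$, where the Maurer--Cartan scheme $W\subset H^1(L)$ is cut out by equations with values in $H^2(L)$ and is therefore all of $H^1(L)$ when $H^2(L)=0$).

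As written, however, both halves of your argument share a genuine gap: you apply the hypotheses $H^1(L)=0$ and $H^2(L)=0$ to elements that are not honest cocycles in $L$. The differential on $L$ is bracketing with $d_B\otimes 1+1\otimes d_F$, and the summand $d_B\otimes 1$ strictly raises the filtration, so the filtration is not a grading preserved by $d_L$. Consequently the Maurer--Cartan equation only gives $d\tau_m\equiv 0$ modulo filtration $m+1$, and likewise the Bianchi identity $dR=-[\tau,R]$ only gives $d\omega_k\equiv 0$ modulo filtration $k+1$. Hence the steps ``the hypothesis yields $\phi_m$ with $d\phi_m=\tau_m$'' and ``$d\omega_k=0$, so $[\omega_k]\in H^2(L)$'' are not justified: the relevant classes live in the cohomology of the associated graded (or of the quotients $L/F^{n+1}L$), the vanishing of $H^*(L)$ does not automatically descend there, and even when a primitive exists it must be produced at the correct filtration level for the induction to advance. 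The repair is exactly what \S\ref{invariance} supplies: run the induction modulo $F^{n+1}$, allowing corrections by both exact terms and higher-filtration Maurer--Cartan terms at every stage, and invoke the lemma on complete, bounded-above filtered complexes --- or sidestep the issue entirely by transferring to the minimal $L_\infty$-model (where $d=0$ and every element is a cocycle on the nose), from which both statements are immediate.
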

We now proceed to simplify $L$ without changing $H^1(L)$, along the lines
suggested by our classification of  homtopy types.

First consider $F = (S Y,\delta)$, not necessarily minimal.  Combining
Theorems \ref{T:Cquism}  and \ref{T:semi-iso}, we can replace ${Der}\, F$ by $D = sLH\, \sharp \,
{Der}\ LH$ where $LH$ is the free Lie algebra on the positive homology of
$F$ provided with a suitable differential.  If $dim\, H(F)$ is finite, $D$
will have finite type, so we apply the $A$--construction to obtain $A(D)$.  Let
$[A(D),B]$ denote the set of augmented homotopy classes of dgca maps (cf.
Definition \ref{D:dgahmtpy}). 


\subsection{Classification theorem}\label{fiberclass}
 The proof of Theorem \ref{mainht} carries over to
\begin{thm}\label{A(D)classifies}    There is a canonical bijection between $M_{D \hat
\otimes \bar B}$ and $[A(D),B]$, that is, $A(D)$ classifies fibrations in the
homotopy category.
\end{thm}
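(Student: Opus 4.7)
The plan is to translate Theorem \ref{mainht} from the cocomplete dgcc classification to a dgca classification via the adjointness of $C$ and $A$, exploiting the finite-type hypotheses on $D$ and on $F$ that are already in force. I would organize the argument in three steps.

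First, I would set up the basic dictionary between Maurer--Cartan elements and classifying dga maps. Since $D$ has finite type (guaranteed by the assumption that $\dim H(F)$ is finite, via $D=sLH\,\sharp\,\mathrm{Der}\,LH$), the algebra $A(D)=\mathrm{Hom}(C(D),\Q)$ is cofree-dual to $C(D)$, and a dga map $\phi:A(D)\to B$ is determined by its restriction to the space $sD^\#$ of indecomposables. This restriction is a linear map $sD^\#\to\bar B$ of degree zero, which corresponds by duality to an element $\tau\in D\,\hat\otimes\,\bar B$ of total degree $1$. A standard calculation, dual to Definition \ref{D:classmap}, shows that $\phi$ is compatible with the dga differentials precisely when $\tau$ satisfies the Maurer--Cartan equation $[d,\tau]+\tfrac12[\tau,\tau]=0$, i.e.\ when $\tau\in V_{D\hat\otimes\bar B}$. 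Thus, set-theoretically, $V_L$ for $L=D\,\hat\otimes\,\bar B$ is in natural bijection with the set of augmented dga maps $A(D)\to B$.

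Second, I would show that the gauge equivalence given by the $\exp L^0$ action corresponds, under this dictionary, to the dga homotopy of Definition \ref{D:dgahmtpy}. Given $\phi_0,\phi_1:A(D)\to B$, a dga homotopy is a map $h:A(D)\to B[t,dt]$ with $h(0)=\phi_0$ and $h(1)=\phi_1$. Dualizing and using the identification $B[t,dt]\cong B\otimes I$ with the unit interval algebra $I=S[t,dt]$, such $h$ corresponds precisely to a pair $(\eta(t),\zeta(t))\in L^1[t]\oplus L^0[t]$ satisfying the two differential equations
\begin{gather*}
d_L\eta+\tfrac12[\eta,\eta]=0,\\
\tfrac{d\eta}{dt}+d_L\zeta+[\eta,\zeta]=0
\end{gather*}
from Section \ref{proof main}, with $\eta(0)=\tau_0$ and $\eta(1)=\tau_1$. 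This is exactly the data governing filtered homotopies of classifying maps, so the relation of dga-homotopy on $\{A(D)\to B\}$ matches, tautologically, the gauge equivalence on $V_L$.

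Third, combining the two previous steps yields a bijection between $M_L=V_L/\!\exp L^0$ and $[A(D),B]$. Convergence and completeness work out because $L=D\,\hat\otimes\,\bar B$ is complete with respect to the $F$-degree filtration (equivalently, the filtration by powers of $\bar B$), so the formal power-series solutions built in the proof of Theorem \ref{mainht} actually converge, and the inductive reconstruction of an automorphism $\phi$ from a homotopy goes through verbatim.

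The main obstacle, and the only step that requires genuine care rather than pure translation, is the duality argument of the first step: one must verify that the signs, degree shifts induced by $s$, and the passage to the completion $\hat\otimes$ all conspire so that dga maps $A(D)\to B$ correspond exactly to degree-$1$ elements of $D\,\hat\otimes\,\bar B$ satisfying the MC equation, and similarly that dga homotopies correspond to the paired system $(\eta,\zeta)$. Once this bookkeeping is done, the rest is a direct application of the Main Homotopy (Perturbation) Theorem with $L=D\,\hat\otimes\,\bar B$.
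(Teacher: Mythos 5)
Your proposal is correct and is essentially the paper's own argument: the paper merely asserts that the proof of the Main Homotopy Theorem carries over to $L = D\,\hat\otimes\,\bar B$, and your three steps (Maurer--Cartan elements correspond to augmented dga maps $A(D)\to B$ by cofreeness and finite type, the $\exp L^0$ gauge action corresponds to dga homotopy via the two differential equations, and completeness in the filtration by $F$-degree gives convergence) are exactly what that carrying over amounts to. You have simply made explicit the dualization bookkeeping that the paper leaves implicit.
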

However, $A(D)$ now has negative terms and can hardly serve as the model of a
space.  To reflect the topology more accurately, we first truncate $D$.

\begin{definition}  For a $Z$ graded complex $D$ (with differential of degree
+1), we define the $n^{th}$ truncation of $D$ to be the complex whose component
in degree $k$ is
\begin{align*}
D^k \cap\, ker\, d\ \quad &\text{if }\ k =n,\\
D^k\ \quad &\text{if }\ k<n,\\
0\ \quad &\text{if }\ k>n.
\end{align*}
We designate the truncations for $n=0$ and $n=-1$ by $D_c$ and $D_s$
respectively (connected and simply connected truncations).
\end{definition}
\begin{thm}  \label{ADc-classify}  Let $F$ be a free dgca and $D = sLH \, \sharp \,
{Der}\ LH$ as above.  If $B$ is a connected (respectively simply connected)
dgca, there is a one--to--one correspondence between classes of $F$ fibrations
over $B$ and augmented homotopy classes of dga maps $[A(D_c),B]$ (resp.
$[A(D_s),B]$).
\end{thm}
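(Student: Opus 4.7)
The plan is to deduce Theorem \ref{ADc-classify} from Theorem \ref{A(D)classifies} by replacing the dg Lie algebra $D$ with its truncation $D_c$ (respectively $D_s$) and invoking the homology-in-degree-1 invariance of the moduli space (Theorem \ref{1iso}). First, Theorem \ref{A(D)classifies} already identifies strong equivalence classes of $F$-fibrations over $B$ with $M_{D \hat{\otimes} \bar B}$, and also with $[A(D),B]$. The same theorem applied to $D_c$ (which is a sub dg Lie algebra of $D$, since cycles in $D^0$ are closed under the bracket) identifies $M_{D_c \hat{\otimes} \bar B}$ with $[A(D_c),B]$. So it suffices to show that the inclusion $i\colon D_c \hookrightarrow D$ induces a bijection $M_{D_c \hat{\otimes} \bar B} \to M_{D \hat{\otimes} \bar B}$, and analogously for $D_s$ when $B$ is simply connected.

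To apply Theorem \ref{1iso} to the induced map $i \hat{\otimes} 1\colon D_c \hat{\otimes} \bar B \to D \hat{\otimes} \bar B$, I would verify the three conditions on $H^0,H^1,H^2$ by combining an easy cohomology calculation for the truncation with a K\"unneth argument for the completed tensor product. The cohomology of the truncation satisfies $H^k(D_c) = H^k(D)$ for $k \le 0$ and $H^k(D_c) = 0$ for $k > 0$; the crucial point is $k=0$, where $H^0(D_c) = (D^0 \cap \ker d)/d D^{-1} = H^0(D)$. Since we work over the ground field $\mathbf{Q}$ and the completion is with respect to the filtration from $\bar B$, K\"unneth gives $H^k(D \hat{\otimes} \bar B) \cong \prod_{n\ge 1} H^{k-n}(D) \otimes H^n(\bar B)$, and similarly for $D_c$. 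For $B$ connected, $\bar B$ is concentrated in degrees $\ge 1$, so $H^1$ involves only $H^{\le 0}(D)$ and the induced map is the identity, $H^2$ involves $H^{\le 1}(D)$ and the induced map is the inclusion of the subspace that drops the $H^1(D)\otimes H^1(\bar B)$ summand (hence injective), and $H^0$ involves only $H^{\le -1}(D)$ so the induced map is an isomorphism, hence surjective. For $B$ simply connected, $\bar B$ is in degrees $\ge 2$, and the same argument with $D_s$ in place of $D_c$ goes through: $H^1$ sees only $H^{\le -1}(D)$ (identity), $H^2$ loses the $H^0(D)\otimes H^2(\bar B)$ summand (injective), and $H^0$ sees only $H^{\le -2}(D)$ (isomorphism).

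With the three hypotheses of Theorem \ref{1iso} verified, we obtain bijections $M_{D_c \hat{\otimes} \bar B} \cong M_{D \hat{\otimes} \bar B}$ (resp.\ $M_{D_s \hat{\otimes} \bar B} \cong M_{D \hat{\otimes} \bar B}$). Combining this with Theorem \ref{A(D)classifies} in both directions yields the desired bijection $[A(D_c),B] \cong [A(D),B]$ (resp.\ $[A(D_s),B] \cong [A(D),B]$), and the right-hand side is in bijection with strong equivalence classes of $F$-fibrations over $B$.

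The main obstacle I anticipate is not the cohomological bookkeeping but verifying that the completed tensor product $D \hat{\otimes} \bar B$ really is $L^0$-complete and bounded above in each degree in the sense required by Theorem \ref{1iso}; this forces us to use the filtration by $\bar B$-degree carefully and to justify K\"unneth on the completion rather than on $D \otimes \bar B$ itself. Once that framework is in place, the degree-by-degree verification of the conditions on $H^0, H^1, H^2$ is essentially the calculation sketched above.
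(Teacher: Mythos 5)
Your proposal is correct, and for the connected case it is essentially the paper's argument with the details filled in: the paper also reduces to Theorem \ref{1iso} by checking that the truncated algebra tensored with $\bar B$ is a homology equivalence in degree one with the full one (the paper compares $D_c \hat{\otimes} \bar B$ against ${Der}\,F \hat{\otimes} \bar B$ rather than $D \hat{\otimes} \bar B$, an immaterial difference since $D \to {Der}\,F$ is a quasi-isomorphism), and it simply asserts "it is easy to check" where you supply the K\"unneth bookkeeping showing that connectivity of $\bar B$ shifts all the relevant contributions of $H^{>0}(D)$ out of degrees $0,1,2$. Where you genuinely diverge is the simply connected case: the paper does \emph{not} re-run the degree-one-equivalence argument for $D_s$, but instead compares $A(D_c)$ and $A(D_s)$ directly, observing that since $A(\ )$ is free, maps and homotopies into $B$ are controlled by twisting cochains on the duals $D_c^{\dual}$ and $D_s^{\dual}$, which differ only in degrees $0$ and $1$ and hence give the same twisting cochains and homotopies when $B$ is simply connected. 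Your route is more uniform (one mechanism handles both truncations) and gives the statement for $D_s$ without passing through $D_c$; the paper's route is shorter for that half and avoids having to verify the completeness and boundedness hypotheses of Theorem \ref{1iso} a second time. Both are valid; your honest flagging of the $L^0$-completeness and bounded-above conditions on $D \hat{\otimes} \bar B$ is exactly the point the paper leaves implicit, and it is where the filtration by $F$-degree (equivalently $\bar B$-degree) must be invoked.
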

Thus $A(D_c)$ corresponds to a classifying space $B \ {Aut}\,
\mathcal F$ where ${Aut}\, \mathcal F$ is the topological monoid of 
self--homotopy equivalence of the space $\mathcal F$.  
Similarly, $A(D_s)$ corresponds to the
simply connected covering of this space, otherwise known as the classifying
space for the sub--monoid $S\, {Aut}\, F$ of homotopy equivalences
homotopic to the identity. 


\begin{proof} (Connected Case)  $D \to {Der}\, F$ is a cohomology
isomorphism. For $K = D_c \hat \otimes \bar B \text{ and } L =
{Der}\, F \hat \otimes \bar B$, it is easy to check that $K
\to L$ is a \emph{homotopy equivalence in degree one} in the sense of
\ref{in degree 1}, so that $M_K$ is homeomorphic to $M_L$.  By \ref{ADc-classify}, $[A(D_c),B]$ is
isomorphic to $M_K$, which corresponds to fibration classes by
\ref{A(D)classifies}.\end{proof} 
If we set $\mathfrak g =D_c/D_s$, so that $H(\mathfrak  g) = H^0(D)$,
then the exact sequence $$ 0 \to D_s \to D_c \to \mathfrak g \to 0$$
corresponds to a fibration $$ B\, S\, {Aut}\, F \to B\,
{Aut}\, F \to K(G,1) $$ where $G$ is the group of homotopy
classes of homotopy equivalences of $F$, otherwise known as
\lq\lq outer automorphisms\rq\rq \, \cite {sullivan:inf}.  
\begin{proof}(Simply
connected case) Since $A(\ )$ is free, the comparison of maps
$A(D_c) \to B$ and $A(D_s) \to B$ can be studied in terms of the
twisting cochains of the duals $D_c^* \to B$ and $D_s^* \to B$.
Since $D_c^*$ and $D_s^*$ differ only in degrees $0$ and $1$, if
$B$ is simply connected, the twisting cochains above are the
same, as are the homotopies.
\end{proof}
The $k^{th}$ rational
homotopy groups $(k> 1)$ of $A(D_s)$ and $A(D_c)$ are the same,
(namely, $H^{-k+1}({Der}\ F)$), but the cohomology groups
are not.

In the examples below, we will need the following:

**********************

REVISION OF 10/9/12

If $D_c$ denotes the connected (non positive) truncation of the Tangent Algebra of a formal space,  then we have 

\begin{theorem}  $H(A(D_c))$ is concentrated  in weight 0 .
\end{theorem}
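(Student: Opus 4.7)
The plan is to exhibit, for each nonzero weight $w$, a contracting homotopy on the weight-$w$ piece of $A(D_c)$; the homotopy will come from contraction with the Euler derivation of the bigraded model of the formal space.

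First, we use formality to put a weight grading on $D_c$. Since the base space is formal, we may take its minimal dgca model $F=(SZ,d)$ to be the bigraded Tate--Jozefiak resolution of its cohomology (\S\ref{T-J}); then $F$ carries a weight grading and $d$ preserves weight. The derivation dgla ${Der}\,F$, and hence any dgla model $D$ for it (such as $sLH\,\sharp\,{Der}\,LH$ from \ref{T:semi-iso}), inherits a weight grading in which the induced differential $[d,\cdot]$ is weight--preserving and the bracket is weight--additive. The truncation $D_c$ is a sub-dgla inheriting this grading, so the Chevalley--Eilenberg algebra $A(D_c) = {Hom}(C(D_c),\Q)$ becomes weight-graded with a weight-preserving differential, yielding
\[
H(A(D_c)) = \bigoplus_{w} H(A(D_c))^w .
\]

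Second, introduce the Euler derivation. Let $E\colon F\to F$ be defined by $E(z) = w_z\cdot z$ for $z\in Z^{q,w_z}$ and extended as a derivation of $SZ$. Then $E\in {Der}^0\,F$ has total degree $0$, and $[d,E]=0$ because $d$ preserves weight; hence $E\in D^0\cap\ker d = D_c^0$. For any weight-homogeneous $\theta\in D$ of weight $w$, a direct check gives $[E,\theta] = \pm w\,\theta$, so $ad(E)$ is, up to sign, the weight-counting operator on $D_c$.

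Third, apply Cartan's formula. On $A(D_c)$, the interior product $\iota_E$ of degree $-1$ is the anti-derivation obtained by contracting with $sE\in sD_c$; then $\mathcal{L}_E := [d_A,\iota_E]$ is a degree-$0$ derivation of $A(D_c)$. A computation on generators in $(sD_c)^{\dual}$, dualizing the coadjoint action of $E$ on $sD_c$, shows that $\mathcal{L}_E$ acts on the weight-$w$ piece of $A(D_c)$ as multiplication by $w$. Consequently, if $\alpha\in A(D_c)$ is a cocycle of weight $w\neq 0$, then $w\,\alpha = \mathcal{L}_E(\alpha) = d_A(\iota_E\alpha)$, so $\alpha = d_A(\iota_E\alpha / w)$ is a coboundary. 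Hence $H(A(D_c))^w = 0$ for $w\neq 0$, as claimed. The main obstacle is the sign-sensitive verification of Cartan's formula $\mathcal{L}_E = [d_A,\iota_E]$ and of the weight-counting identity for $\mathcal{L}_E$ on the cofree cocomplete coalgebra $C(D_c) = S^c(sD_c)$ and its dual; once these are in hand, the concentration in weight $0$ is immediate from the above.
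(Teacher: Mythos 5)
Your proof is correct and is essentially the paper's own argument: the paper likewise observes that the weight grading of $D_c$ is \emph{inner}, being given by $ad$ of the weight-zero Euler element (there written explicitly as $\theta=\Sigma(t_i-1)x_i\partial x_i$ in ${Der}\,LH$ rather than on the Sullivan-side model $SZ$), and then invokes the dgla version of Fuks's Theorem 1.5.2, which is exactly the Cartan-formula homotopy $\mathcal{L}_E=[d_A,\iota_E]$ that you spell out. The only difference is that you write out the contraction argument that the paper delegates to the citation.
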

\begin{proof} We take $D$ to be the outer derivations of the free Lie  algebra $LH = \mathcal{L} ( x_1 , ... x_n)$. Here $x_i$ has topological degree $t_i$,  i.e.  $t_i = 1-s_i$, where the $s_i$ are the degrees in a basis of $H$). In addition to  the topological degree 
$t$ in $LH$, we have a bracket or resolution degree $r$, and a weight $w = t - r$. In $D_c$, we have $ t \leq 0, r \geq 0, w \leq 0$.
The differential in  $D_c$, arising from the multiplication in  $H$, has degrees $1, 1, 0$ with respect to $t,r,w$. Thus  the differential in $D_c$ or its cochain algebra $A(D_c)$ preserves weight .

WELL SAID - THANKS

The key point is that the weight grading in $D_c$ (or $D$) is induced by bracketing with the derivation 
                            $$ \theta = \Sigma (t_i  - 1 ) x_i \partial x_i$$
which lies in the weight 0 part of $D_c$. That is, if $\phi$ has weight $w$, then $[\theta ,\phi] = w\phi$. Fuks in \cite{Cohomology of Infinite dimensionsl Lie algebras} refers to $w$   as an ``internal grading "  of $D_c$. It is easy to deduce the dgla version of Theorem 1.5.2 in that text and conclude that $H(A (D_c))$ is concentrated in weight 0, as desired.
\end{proof} 

Note that  in $A(D_c)$ we have $t \geq 1, r \leq 1, w = t-r \geq 0$, so that the weight 0 sub algebra of $A(D_c)$ is given by $t = 1 = r$. So the cohomology of $A(D_c)$ is the Eilenberg Maclane 

Chevalley-Eilenberg ??

cohomology of the weight 0  generating space of $D_c$ is given by the conditions $t = 0 = r$ in $D_c$ and is linearly spanned by the derivations $x_i \del x_j$ with $t_i = t_j$.

END OF REVISION

**********************

\subsection{Examples}
\begin{ex} Consider $\mathcal F = {\Bbb C}P^n$ 
and $F = S(x,y)$ with
$\vert x \vert = 2, \vert y \vert = 2n+1 \text{ and }\\
 dy = x^{n+1}$.  Since $F$
is free and finitely generated, we take $D = {Der}\ F$ and obtain
$$
D =  \{\theta^0,\theta^{-2},\phi^{-1},\phi^{-3},\dots ,\phi^{-2n-1}
\}$$
with indexing denoting degree and
\begin{align*}
\theta^0 &= 2x\partial x + (2n+2)y\partial y\\
\theta^{-2} &= \partial x\\
\phi^{-(2k+1)} &=x^{n-k} \partial y.
\end{align*}
The only nonzero differential is $d\theta^{-2} = \phi^{-1}$ and the nonzero
brackets are 
\begin{align*}
[\theta^0,\theta^{-2}] &= 2\theta^{-2}\\
 [\theta^0,\phi^{\nu}] \, \, &= (\nu -1)\phi^{\nu}\ \ \text{ and}\\
[\theta^{-2},\phi^{\nu}] = (n-k) \phi^{\nu -2}.  
\end{align*}\noindent
We then have the sub-\dgL $D_c =\{\theta^0,\phi^{-3},\dots\}$ which
yields
$$
A(D_c) \simeq S (v^1,w^4,w^6,\dots ,w^{2n+2}),
$$
the free algebra with $dv^1 = 0, dw^4 = \frac{1}{2}v^1w^4$, etc., and $v^1, w^4,
\dots $ dual to $\theta^0,\phi^{-3},\dots .$   The cohomology of this
 dgca is that of the subalgebra $S(v^1)$ (by the theorem above), which here is a
model of $BG = K(G,1)$ for $G = GL(1)$, the (discrete) group of homotopy classes
of homotopy equivalences of ${\Bbb C}P^n$.  (These automorphisms are represented
geometrically  by the endomorphisms $(z,\dots ,z_n) \mapsto (z^\lambda
,\dots ,z_n^\lambda )$ of the rationalization of ${\Bbb C}P^n$. this formula is not well defined
on ${\Bbb C}P^n$ unless $q$ is an integer, but does extend to the rationalization for all $q$ in $\Q^*$.
This follows from general principles, but may also be seen explicitly as follows. The rationalization is the inverse limit of ${\Bbb C}P^n_s$, indexed by the positive integers $s$, which are ordered by divisibility. The transition maps ${\Bbb C}P^n_s  \to {\Bbb C}P^n_t$  are given by $z_i \mapsto z_i ^{s/t}$. 
The $m$-th root of the sequence  $(x_s$) is then $( y_s)$, where $y_s = x_{ms}$.

 Algebraically  these grading automorphisms of
the formal dga $F$ are given by
$a \mapsto t^wa$ ($w$ = weight of $a$) for $a$ in $F$.
\end{ex}

Thus, the characteristic classes in  $HA(D_c)$ have detected only the fibrations
over $S^1$, not the remaining fibrations given by
$$
[A(D_c),S^{2k}] = H^{-2k-1}(D) = \{[\phi^{-2k-1}]
\}$$
\lq\lq dual\rq\rq\, to $w^{2k}$.

These other fibrations are, however, detected by
$$
H(A(D_s)) = S(w^4,w^6,\dots ,w^{2n+2}),
$$
since $D_s = \{\theta^{-2},\phi^{-1},\phi^{-3},\dots\}$ has the homotopy type
of
$\{\phi^{-3},\dots\}$.
These last fibrations come from standard ${\Bbb C}^{n+1}$ vector bundles over
$S^{2k}$, and the $w^{2i}$ correspond to Chern classes $c_i$ via the map
$BGL(n+1,{\Bbb C)}\to BS\, {Aut}\, {\Bbb C}P^n$.  (The fibration for $c_1$
is missing because, for $n=0$, the map $BGL(1,{\Bbb C)}\to BS\, {Aut}^*\,
\simeq * $ is trivial; a projectivized line bundle is trivial.


To look at some other examples, we use computational machinery and the notation of
Section \ref{examples}.

If the positive homology of $F$ is spanned by $x_1,\dots,x_r$ of degrees
$\nu_1,\dots,\nu_r, r > 1$, then ${Der}\, L(\HH)$ is spanned by symbols of the
form
$$
[x_1,[x_2,[\dots,x_{m+1}]\dots]\partial x_{m+2}
$$
of degree $\nu_{m+2} - (\nu_1 +\dots+\nu_{m+1}) + m$.

\begin{ex} If we take the fibre to be the bouquet $S^\nu
\vee   \dots \vee   S^\nu$ ($r$ times), then $\nu_i = \nu, d = 0$
in $LH$ and in $D$, and the weight $0$  part of $D_c$ has weight is
 $$ \mathfrak g = \{x_i \del x_j\} \simeq \mathfrak{gl}(r) $$ and 
  $$ H(A(D_c)) = H(A(g)) = S(v^1,v^3,\dots,v^{2n-1}) $$ (superscripts again indicate degrees) and
detects over $S^1$ the fibrations  (with fibre the bouquet) which
are obtained by twisting with an element of $GL(r)$. The model
$A(D_c)$ has homotopy groups in degree $p = m(\nu -1) + 1$,
spanned by  symbols as above (mod $ ad\, L(\HH))$.  Such a homotopy
group is generated by a map corresponding to a fibration over
$S^p$ with twisting term $$ \tau \in H^p(S^p) \otimes
H^{1-p}({Der}\ F) $$ which has weight $1-m$ in $H(S^p
\otimes F)$.  For $m>1$, this is negative and gives a
perturbation of the homotopy type of $S^p \times F$ (fixing the
cohomology); we thus have for $m>1$, a surjection from fibration
classes $F \to E \to S^p$ to homotopy types with cohomology
$H(S^p \otimes F)$, the kernel being given by the orbits of
$GL(r)$ acting on the set of fibration classes.  For $m=1, p =
\nu$, the twisting term gives a new graded algebra structure to
$H(S^p \otimes F)$ via the structure constants $a^k_{ij}$ which
give $x_ix_j = \Sigma a^k_{ij}yx_k$ where $y$ generates
$H^p(S^p)$.
\end{ex}
If we replace the base $S^\nu$ by $K(\Q,\nu)$ (for $\nu =2, {\Bbb C}P^2$ will
suffice), then the integrability condition $[\tau,\tau] = 0$ is no longer
automatic; it corresponds to the associativity condition on the $r$ dimensional
vector space $H^\nu(F)$ with multiplication given by structure constants
$a^k_{ij}$.  The cohomology of $B\, S\, {Aut}\, (S^\nu \vee  \dots \vee  
S^\nu)$ generated by degree $\nu$ is the
coordinate ring of the (miniversal) variety of associative
commutative unitary algebras of dimension $r+1$; that is, it is
isomorphic to the polynomial ring on the symbols $a^k_{ij}$
modulo the quadratic polynomials expressing the associativity
condition, and the $r$ linear polynomials arising from the action
of $ad\, x_i$ (\lq\lq translation of coordinates\rq\rq).

Apart from these low degree generators and relations, the cohomology of $A(D_s)$
remains to be determined.  For example, is it finitely generated as an algebra?
Already for the case of $S^2 \vee S^2$, there is, beside the above classes in
$H^2( A(D_s))$, an additional generator in $H^3$ dual to
$$
\theta = [x_1,[x_1,x_2]]\partial x_1 - [x_2[x_1,x_2]]\partial x_2 \in D^{-2}
$$
(which gives the nontrivial fibration $S^2 \vee S^2 \to E \to S^3$ considered
before).  Since $\theta \in [D_s,D_s]$, it yields a nonzero cohomology class.

In this last example, we saw that $H^*(E)$ need not be $H^*(B) \otimes H^*(F)$ as
an algebra.  Included in our classification are fibrations in which $H(E)$ is
not even additively isomorphic to $H(B) \otimes H(F)$.

\begin{example} Consider the case in which the fibre is $S^\nu$.  For $\nu$
odd, we have $F = S(x)$ and ${Der}\ F = S(x)\partial x$ with
$D_s = \{\partial x\}$.  The universal simply connected fibration is
$$
S^\nu \to E \to K(\Q,\nu +1)
$$
  or  
$$
S (x) \gets (S (x,u),dx = u) \gets S (u)
$$
with $E$ contractible.  Here $\tau = \partial x \otimes u$ and
the transgression is not zero.

By contrast, when $\nu$ is even, $F = S (x,y)$ with $dy = x^2$ and we get
$D_s = \{y\partial x,\partial x,\partial y\}$ which is homotopy equivalent to
$\{\partial y\}$.  The universal simply connected $S^2$--fibration is then
$$
S^\nu \to E \to K(\Q,2\nu)
$$
where $E = (S (x,y,u),dy=x^2 - u) \simeq S (x)$ is the model for
$K(\Q,\nu)$.  Here  $\tau = \partial y \otimes u$ gives a
\lq\lq deformation\rq\rq\, of the algebra $H(B) \otimes H(F) = S (x,u)/x^2$
to the algebra $H(E) = S (x,u)/(x^2-u)  =  S (x)$.
\end{example}

(Fibrations 
$$
\bigvee^n_1 S^{2(n+i)}  \to E \to K(\Q,2(n+2))
$$
\noindent occur in Tanr\' e's analysis \cite{tanre:stunted} of homotopy types 
related to the
stunted infinite dimensional complex projective spaces 
${\Bbb C}P^{\infty}_n$.) 

A neat way to keep track of these distinctions is to consider the
Eilenberg--Moore filtration of $B \otimes F$ where $F$ is a filtered model,
i.e., $\text{weight}\ (b \otimes f) = \text{degree}\ b \, + \, \text{weight}\ f
= \text{deg} \ b \, + \, \text{deg}\ f + \text{resolution degree}\ f$.
(Cf. \cite{thomas:fibrations}.)

In general, $\tau \in ({Der}\ F \hat\otimes B)1$ will have weight $\leq 1$
since weight $f \leq\ \text{degree}\ f$ and $\tau$ does not increase $F$ degree.
If, in fact, weight $\tau \leq 0$, then $H(E)$ is isomorphic to $H(B) \otimes
H(F)$ as $H(B)$--module but not necessarily as $H(B)$--algebra.

Finally, if we can accept dgca's with negative degrees (without truncating so as
to model a space), we can obtain a uniform description of fibrations and
perturbations of the homotopy type $F$.  Consider in ${Der}\, F$, the
sub-\dgL $D_-$ of negatively weighted derivations, then $[A(D_-),B]$ is for 
$B =S^0$, the space of homotopy types underlying $H(F)$ while for connected $B$, it
is the space of strong equivalence classes of $F$--fibrations over $B$.

\subsection{ Open questions}
\label{questions}

We turn now to the question of realizing a given quotient variety $M = V/G$ as
the set of fibrations with given fibre and base, or as the set of homotopy types
with given cohomology.  The structure of $M$ appears to be arbitrary, except
that $V$ must be conical (and for fibrations, $G$ must be pro-unipotent).  The
fibrations of an odd dimensional sphere $S^\nu$ over $B$ form an affine space $M
= V = H^{\nu +1}(B)$, though it is not clear how to make $V$ have general
singularities or pro-unipotent group action.

For homotopy types, we consider the following example, provoked by a letter from
Clarence  Wilkerson.  Take $F$ to be the model of $S^\nu \vee S^\nu$, 
for $\nu$ even.
As we have seen, the model $D$ of ${Der}\, F$ contains a derivation $\theta
= [x_1,[x_1,x_2]]\partial x_1 - [x_2,[x_1,x_2]]\partial x_2$ of degree $-2\nu
+2$ and weight $-2\nu$, which generates $H^{-2\nu +2}(D)$.  If $B = S^3 \times
(CP^\infty)^n$, then $H^{-2\nu +2}(D) \otimes H^{2\nu -1}(B) = V$ has weight
$-1$ and may be identified with the homogeneous polynomials of degree $\nu -2$
in $r$ variables.  If we truncate $B$ suitably, so that we have $H^1((D \otimes
B)_-) = V$, then the set of rational homotopy types with cohomology equal to
$H(F) \otimes H(B)$ is the quotient $V/GL(r)$, i.e., equivalence classes of
polynomials of (even) degree $\nu =2$ in $r$ variables.

We may ask, similarly, which dgca's occur, up to homotopy types, as classifying
algebras $A(D_c)$ or $A(D_s)$.  The general form of the representation problem
is the following: given a finite type of dgL $D$, does there exist a free \dgL
$\pi$ such that $D \sim {Der}\ \pi/ad \, \pi$?

\section{Postscript}
\label{postscript}
Some $n$ years after a preliminary version of this preprint first circulated,
there have been major developments of the general theory and significant applications, many inspried by the interaction with physics.
We have not tried to describe  them; a book would be more appropriate to address properly this active and rapidly evolving field.


\end{document}